\def\sqr#1#2{{\vcenter{\vbox{\hrule height.#2pt
              \hbox{\vrule width.#2pt height#1pt \kern#1pt \vrule width.#2pt}
              \hrule height.#2pt}}}}
\newcommand{\Div}{\divergence}
\newcommand{\ep}{\bfvarepsilon}
\newcommand{\R}{\mathbb R}
\newcommand{\dd}{\mathrm d}
\newcommand{\dt}{\, \mathrm{d}t}
\def\5n{\negthinspace \negthinspace \negthinspace \negthinspace \negthinspace }
\def\4n{\negthinspace \negthinspace \negthinspace \negthinspace }
\def\3n{\negthinspace \negthinspace \negthinspace }
\def\2n{\negthinspace \negthinspace }
\def\1n{\negthinspace }
\def\={\buildrel \triangle \over =}
\def\bal{\begin{aligned}}
\def\eal{\end{aligned}}
\def\({\Big (}
\def\){\Big )}
\def\[{\Big[}
\def\]{\Big]}
\def\bde{\begin{definition}\label}
\def\ede{\end{definition}}
\def\be{\begin{equation}}
\def\bel{\begin{equation}\label}
\def\ee{\end{equation}}
\def\beq{\begin{equation*}\begin{aligned}}
\def\eeq{\end{aligned}\end{equation*}}
\def\bt{\begin{theorem}\label}
\def\et{\end{theorem}}
\def\bc{\begin{corollary}\label}
\def\ec{\end{corollary}}
\def\bl{\begin{lemma}\label}
\def\el{\end{lemma}}
\def\bp{\begin{proposition}\label}
\def\ep{\end{proposition}}
\def\bas{\begin{assumption}\label}
\def\eas{\end{assumption}}
\def\br{\begin{remark}\label}
\def\er{\end{remark}}
\def\bex{\begin{example}\label}
\def\ex{\end{example}}
\def\ba{\begin{array}}
\def\ea{\end{array}}
\def\ed{\end{document}}
\def\square#1{\vbox{\hrule\hbox{\vrule height#1%
     \kern#1\vrule}\hrule}}
\def\rectangle#1#2{\vbox{\hrule\hbox{\vrule height#1%
     \kern#2\vrule}\hrule}}
\font\tenbb=msbm10 \font\sevenbb=msbm7 \font\fivebb=msbm5
\newtheorem{theorem}{\hskip 1.3em Theorem}[section]
\newtheorem{definition}[theorem]{\hskip 1.3em Definition}
\newtheorem{proposition}[theorem]{\hskip 1.3em Proposition}
\newtheorem{corollary}[theorem]{\hskip 1.3em Corollary}
\newtheorem{lemma}[theorem]{\hskip 1.3em Lemma}
\newtheorem{remark}[theorem]{\hskip 1.3em Remark}
\newtheorem{example}[theorem]{\hskip 1.3em Example}
\newtheorem{algorithm}[theorem]{\hskip 1.3em Algorithm}
\newtheorem{assumption}[theorem]{\hskip 1.3em Assumption}
\pgfplotsset{
  width=.65\linewidth,
  axis background/.style={fill=black!5!white},
  grid style={densely dotted,semithick},
  compat=newest 
}
\definecolor{plot_color1}{HTML}{d7191c}
\definecolor{plot_color2}{HTML}{fdae61}
\definecolor{plot_color3}{HTML}{c2a5cf}
\definecolor{plot_color4}{HTML}{abd9e9}
\definecolor{plot_color5}{HTML}{2c7bb6}
\definecolor{Silver}{rgb}{0.752,0.752,0.752}
\newcommand{\jw}[1]{\color{blue}Joern: #1\color{black}}
\begin{document}

\title{Numerical analysis of the stochastic Navier-Stokes equations}

\author{Dominic Breit\thanks{Mathematisches Institut,
TU Clausthal, Erzstrasse 1,
D-38678 Clausthal-Zellerfeld, Germany.
 {\small\it
e-mail:} {\small\tt dominic.breit@tu-clausthal.de}} \qquad
Andreas Prohl\thanks{
Mathematisches Institut, Universit\"at T\"ubingen, Auf der Morgenstelle 10,
D-72076 T\"ubingen, Germany.
{\small\it
e-mail:} {\small\tt prohl@na.uni-tuebingen.de}}\qquad
J\"orn Wichmann\thanks{School of Mathematics, Monash University, 9 Rainforest Walk, Victoria 3800, Australia.
 {\small\it
e-mail:} {\small\tt joern.wichmann@math.uni-bielefeld.de}}
}

\date{\today}
\maketitle

\begin{abstract}
The developments over the last five decades concerning numerical discretisations
of the
incompressible Navier--Stokes equations have lead to reliable tools for
their approximation: those include stable methods to properly address the
incompressibility constraint, stable discretisations to account for 
convection dominated problems, efficient time (splitting) methods, and
methods to tackle their nonlinear character. While these tools may successfully be applied to reliably simulate even more
complex fluid flow PDE models, their understanding requires a fundamental revision in the case
of stochastic fluid models, which are gaining increased  importance nowadays. 

This work motivates and surveys optimally convergent numerical methods for the stochastic Stokes and Navier--Stokes equations that were obtained in the last decades. Furtheremore, we computationally illustrate the failure of some of
those methods from the deterministic setting, if they are straight-forwardly applied to the stochastic case. In fact, we explain why some of these deterministic methods perform sub-optimally by highlighting crucial analytical differences between the deterministic and stochastic equations --- and how modifications of the deterministic methods restore their optimal performance if they properly address the probabilistic nature of the stochastic problem. 

Next to the numerical analysis of schemes, we propose a general benchmark of prototypic
fluid flow problems driven by different types of noise to also compare new algorithms by simulations in terms of complexities, efficiencies, and possible limitations. The driving motivation is to reach a better comparison of simulations for new schemes in terms of accuracy and complexities, and to also complement theoretical performance studies for restricted settings of data by more realistic ones.
\end{abstract}

\tableofcontents

\section{Introduction} \label{sec:intro}

\subsection{Why noise and what type of noise is used?}
Stochastic components are commonly added to the balance laws from fluid mechanics to account for external uncertainties and
thermodynamical fluctuations. 
Moreover, they are used to model turbulent effects in fluid flows.
In turbulence theory one often splits the velocity field in two components: the resolved large-scale and slow-varying component and the unresolved small-scale and fast-varying component. For example, in atmospheric flows these small-scale perturbations are caused by the irregularities of hills and mountain profiles; see \cite[Section 2]{FP1}. Hereby, small scales describe the wind fluctuations at space distances of one to ten meters and large scales at space distances of ten to thousand  kilo-meters. As far as the time-scales are concerned, to change them (\emph{e.g.}, from seconds, over minutes to hours) corresponds to increasing the number of observations, respectively dividing the units on the time axis by the number of observations (zooming out). For example, small-scale fluctuations of the wind pattern are observed at a time scale of one second. They seem random without any pattern. At an intermediate scale of one minute they start to possess additional structure; rather than behaving arbitrarily they look like a random walk. Finally, a white noise pattern (\emph{e.g.}, the action of a Wiener process) can be seen on the large scale of one hour.

The mathematical description of turbulence is by far non-trivial. Starting from the seminal works of Kolmogorov~\cite{Ko1}, turbulence has been studied from a statistical perspective: while one snapshot (a particular time instance) of a fluid might look very chaotic and substantially different from another snapshot, an ensemble of snapshots can be used to derive stable characteristics of the turbulent flow.

It is motivated by the statistical approach that \textit{noise} (a random variable that depends on intrinsic fluid properties and/or external, environmental stimuli) has been introduced in the Navier--Stokes equations to capture the statistics of turbulent flows mathematically. But a universal noise that accurately represents all scales simultaneously has not been found yet; instead, depending on the scale different noises have been proposed, each encapsulating different aspects.

\begin{figure*}[t!]
    \centering
    \begin{subfigure}[t]{0.25\textwidth}
        \centering
        \includegraphics[width=1.0\textwidth]{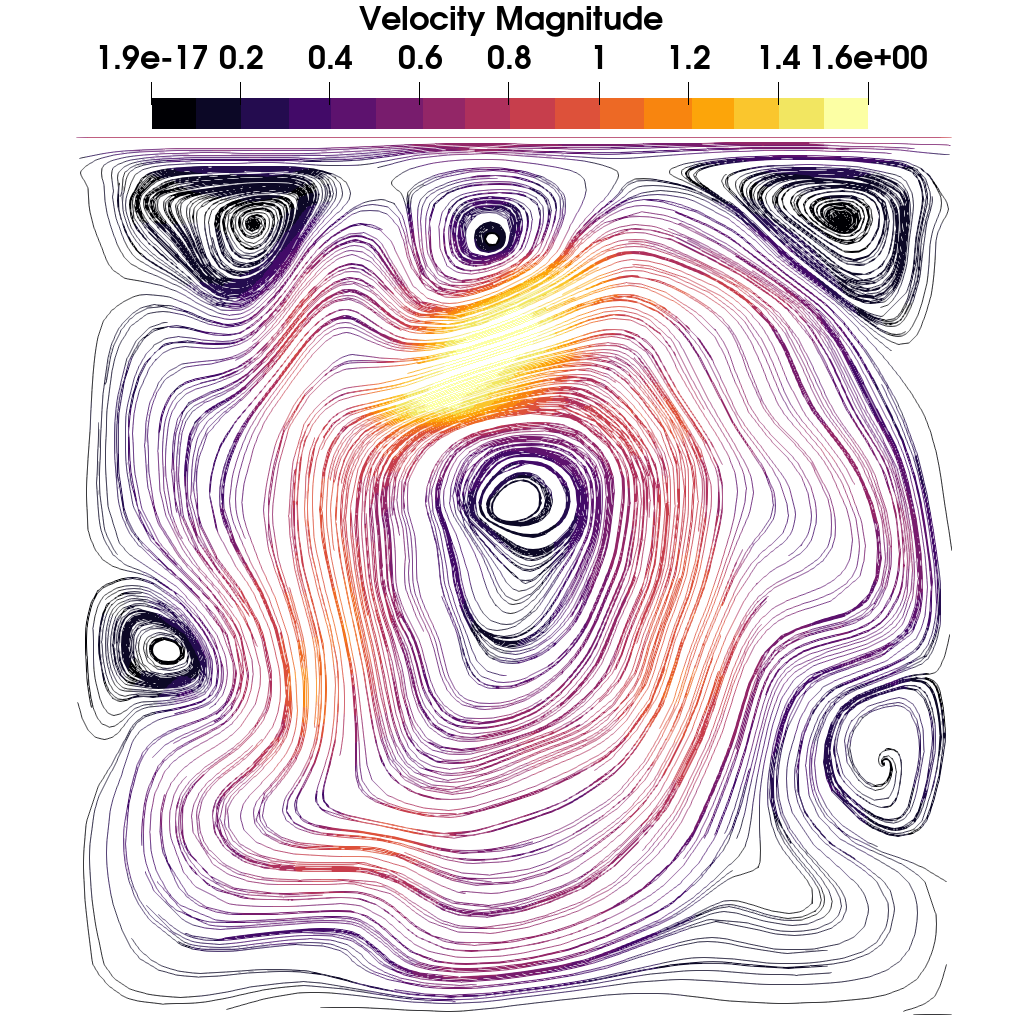}
    \end{subfigure}%
    ~ 
    \begin{subfigure}[t]{0.25\textwidth}
        \centering
        \includegraphics[width=1.0\textwidth]{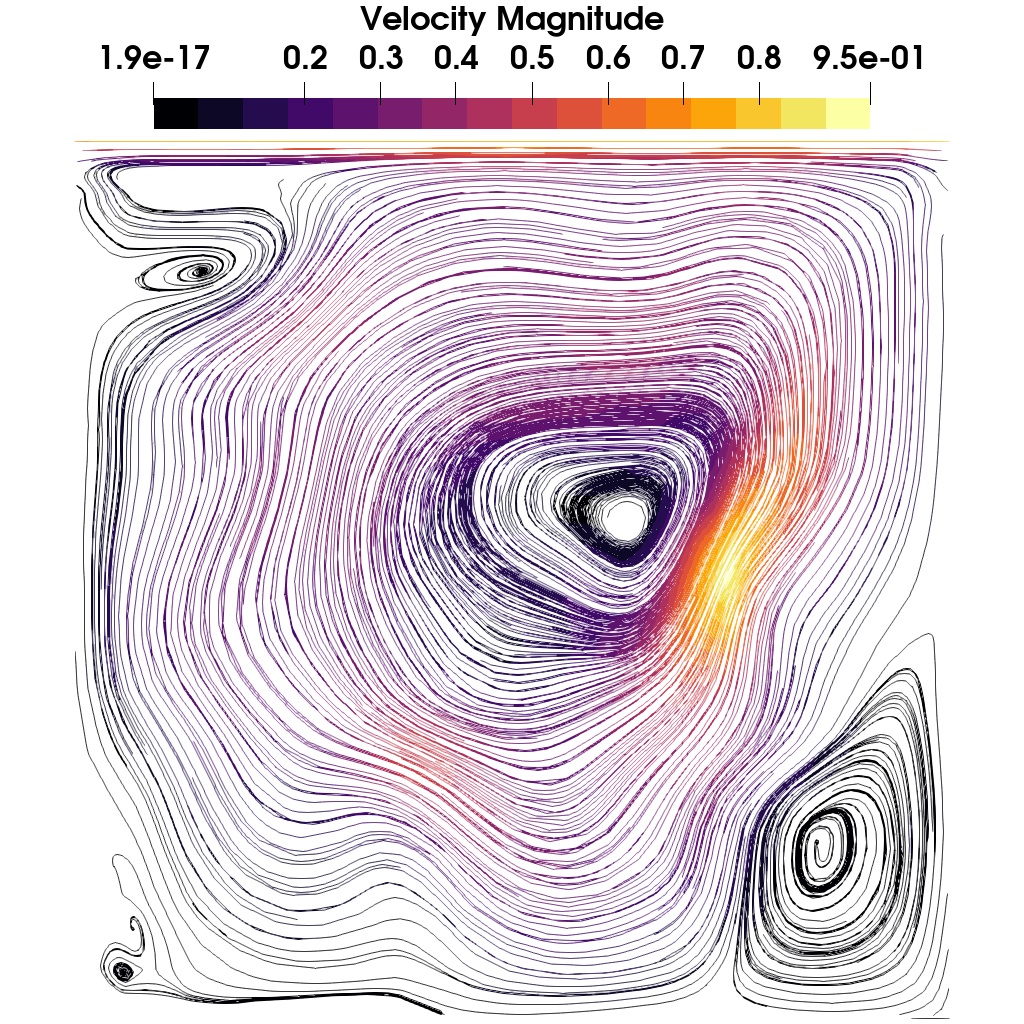}
    \end{subfigure}%
    \begin{subfigure}[t]{0.25\textwidth}
        \centering
        \includegraphics[width=1.0\textwidth]{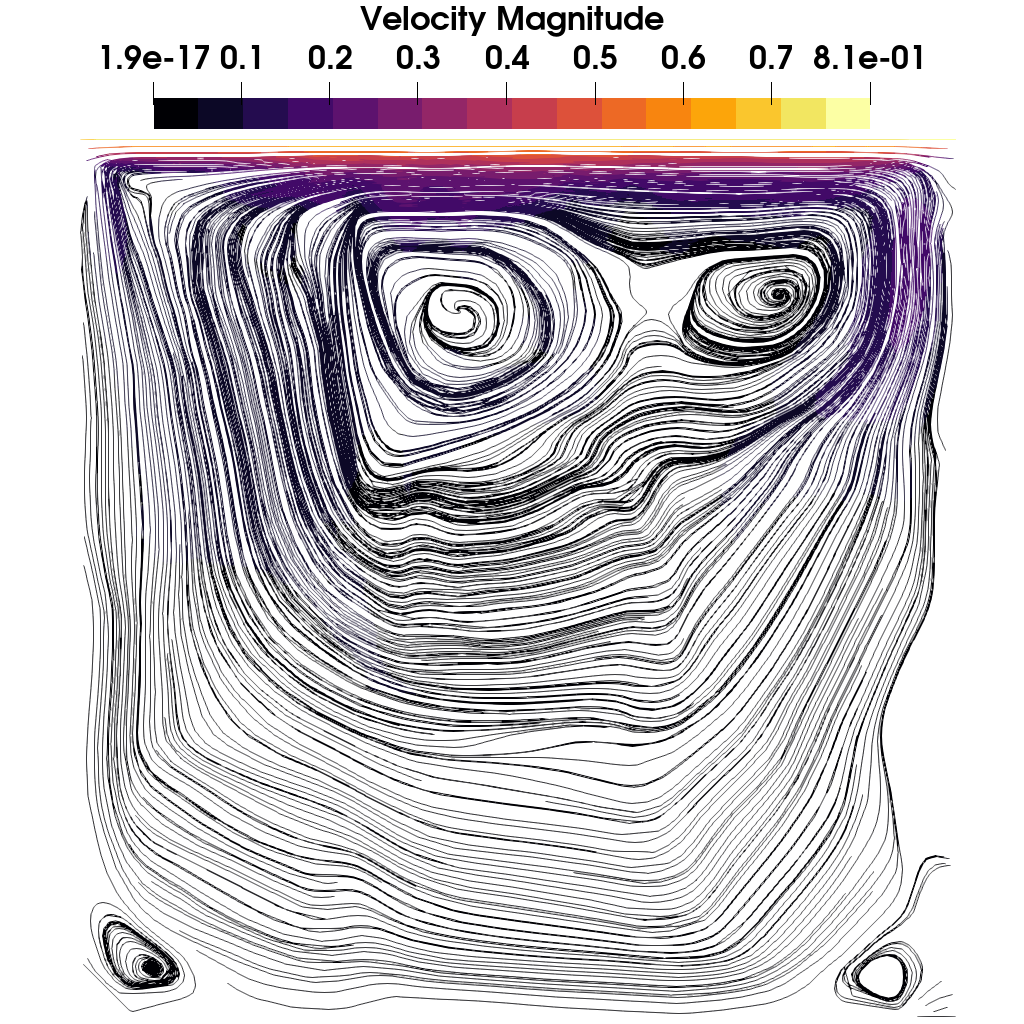}
        \end{subfigure}%
        \begin{subfigure}[t]{0.25\textwidth}
        \centering
        \includegraphics[width=1.0\textwidth]{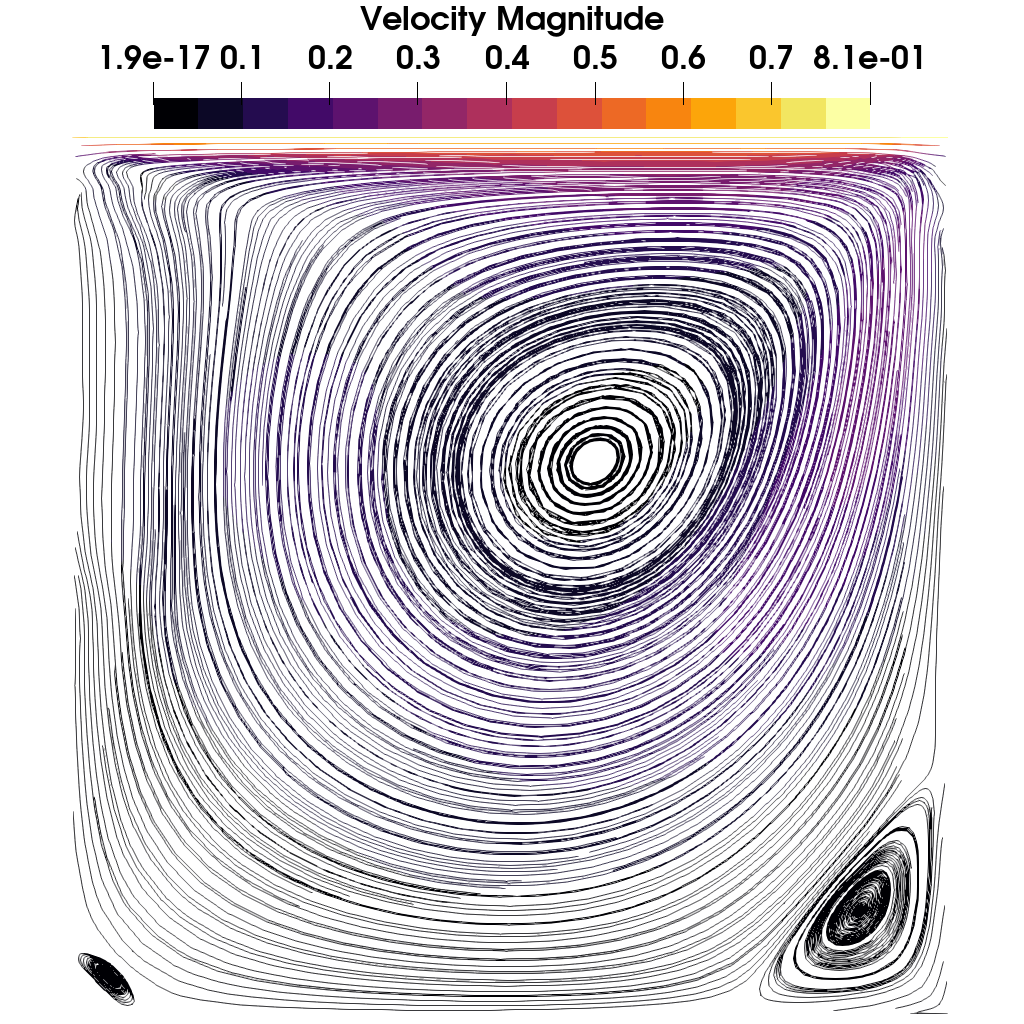}
    \end{subfigure}
    \caption{Snapshots at a common time of realizations of the velocity field streamlines for the lid-driven cavity experiment (Example~\ref{ex:lid-driven}) forced by (from left to right) additive noise, multiplicative noise, transport noise, and no noise.} 
    \label{fig:compareVelocity}
\end{figure*}

\if0
\begin{figure*}[t!]
    \centering
    \begin{subfigure}[t]{0.5\textwidth}
        \centering
        \includegraphics[width=1.0\textwidth]{../Graphics/Lid_driven/trajectories/additive/strongNoise/velocity_end_snap.png}
    \end{subfigure}%
    ~ 
    \begin{subfigure}[t]{0.5\textwidth}
        \centering
        \includegraphics[width=1.0\textwidth]{../Graphics/Lid_driven/trajectories/additive/strongNoise/velocity_mid_snap.png}
    \end{subfigure}
    \caption{Snapshots of one realization of the velocity field streamlines for the lid-driven cavity experiment (Example~\ref{ex:lid-driven}) forced by additive noise.} 
    \label{fig:trajectory_plots}
\end{figure*}
\fi

\if0
\begin{figure*}[t!]
    \centering
    \begin{subfigure}[t]{0.5\textwidth}
        \centering
        \includegraphics[width=1.0\textwidth]{../Graphics/Lid_driven/trajectories/additive/strongNoise/velocity_end_mean.png}
    \end{subfigure}%
    ~ 
    \begin{subfigure}[t]{0.5\textwidth}
        \centering
        \includegraphics[width=1.0\textwidth]{../Graphics/Lid_driven/trajectories/additive/strongNoise/pressure_end_mean.png}
    \end{subfigure}
    \caption{Snapshots of the expected (the average over all realizations) velocity field streamlines (left) and pressure (right) for the lid-driven cavity experiment (Example~\ref{ex:lid-driven}) forced by additive noise. } 
    \label{fig:statistic_plots}
\end{figure*}
\fi

Next, we will motivate three commonly-used noises: \textit{additive noise}, \textit{transport noise}, and \textit{multiplicative noise}, which initiate different dynamics as indicated in the snapshots of realizations in Figure~\ref{fig:compareVelocity}. For this, we denote the large-scale and small-scale velocity fields by~${\bf u}$ and~${\bfeta}$, respectively. Relating their dynamics to each other is now a modelling choice.

\subsubsection{Additive noise}
\begin{framed}
We call any noise that is independent of intrinsic properties of the fluid, such as its velocity field and pressure,  \textit{additive noise}.
\begin{example} \label{ex:AdditiveNoise-intro}
A common additive noise is given by:
\begin{equation} \tag{additive noise}
\sum_k \int \bfsigma_k \,\dd W_k,
\end{equation}
where $\{W_k\}_k$ are independent real-valued Wiener processes, and $\{\bfsigma_k\}_k$ are space-time dependent vector fields. We do not specify the index set for $k$ to shorten the notation; but typically $k \in \mathbb{N}$.
\end{example}
\end{framed}
Additive noise is typically used to model external stimuli that influence the dynamics of fluids randomly; \emph{e.g.}, these stimuli can be induced by hills and mountains for atmospheric flows, as described earlier, or by thermal fluctuations at substantially smaller length scales.

 In \cite{BoEc} a model for 2D turbulence is suggested which fits well with statistically observed data: they describe $\bfeta$ (the small-scale velocity field) and $\vartheta$ (the corresponding small-scale pressure) by a Navier--Stokes system with friction and additive stochastic forcing with Gaussian distribution, as given in the example above. When appropriately scaled with an intensity parameter\footnote{For atmospheric flows as described at the beginning of this section, a particular value for the intensity parameter was given in~\cite[Section 2]{FP1}, namely~$\varepsilon=\frac{1}{60}$ hertz.}~$\varepsilon$, they propose that the small-scale velocity evolves according to:
\begin{subequations}\label{eq:NSsmall}
\begin{eqnarray}
\partial_t \bfeta+\big((\bfeta+\bfu)\cdot\nabla \big)\bfeta &=& \mu\Delta\bfeta-\nabla \vartheta-\varepsilon^{-2}\bfeta+\varepsilon^{-2}\sum_k\bfsigma_k\dot{W}_k ,\\
\Div \bfeta &=& 0, 
\end{eqnarray}
\end{subequations}
where $\dot{W}_k$ denotes the distributional time derivative of $W_k$ and the vector fields $\{\bfsigma_k\}_k$ are additionally assumed to be solenoidal. 

Additive noise also appears in~\cite{Bi}, where the Kolmogorov--Obukhov theory of statistical turbulence, proposed in~\cite{Ko1,Ko2} and~\cite{Ob}, is derived from the stochastic Navier--Stokes equations. Note that the suggested noise is in general not solenoidal.

\subsubsection{Transport noise}\label{sec:1.1.2}
\begin{framed}
For a given a velocity field~$\bfv$, we refer to the following noise as \textit{transport noise}: 
\begin{equation} \tag{transport noise}
\sum_k \int (\bfsigma_k\cdot\nabla)\bfv \circ \dd W_k,
\end{equation}
where $\{W_k\}_k$ and $\{\bfsigma_k\}_k$ are as in Example~\ref{ex:AdditiveNoise-intro}. The symbol~`$\circ$' indicates Stratonovich integration. 
\end{framed}
Especially amongst physicists, Stratonovich integration is favoured over It\^{o} integration since it satisfies the classical chain rule. As we will see later, this will have remarkable consequences on important quantities of interest, including pathwise conservation of the \textit{kinetic energy} of the system.    

In what follows, we present two reasons for considering transport noise in the context of fluids: firstly, it arises naturally as the scaling limit of appropriately coupled small-scale and large-scale dynamics; and secondly, it emerges from geometric mechanics, when unresolved dynamics are incorporated rather than neglected. 

\textbf{Scaling limit.}
The following scaling limit was rigorously studied in~\cite{FP1,FP2}. There, they authors assumed that the large-scale velocity and the corresponding pressure~$\pi$ are described by the Navier--Stokes equations, where the small-scale velocity from \eqref{eq:NSsmall} affects the advection of the large-scale velocity, \emph{i.e.},
\begin{subequations} \label{eq:NSlarge}
\begin{eqnarray}
\partial_t \bfu +\big( (\bfu + \bfeta)\cdot\nabla\big) \bfu &=& \mu\Delta \bfu-\nabla \pi  , \\ 
{\rm div} \, {\bf u} &=& 0. 
\end{eqnarray}
\end{subequations}
The small-scale velocity from \eqref{eq:NSsmall}, which is stimulated by additive noise, is the only source of randomness for the large-scale dynamics. Notice that, due to the dependence of the small-scale velocity on the intensity parameter~$\varepsilon$, the large-scale velocity and pressure depend on it, too. 

By arguing that friction and noise become increasingly important for the small-scale dynamics from \eqref{eq:NSsmall}, the limit $\varepsilon \to 0$ 
of the coupled system is an important matter. In~\cite{FP1,FP2}, this limit was analysed and they showed that the asymptotic large-scale evolution is given by the Navier--Stokes equations forced by transport noise:
\begin{subequations} \label{eq:SNS}
\begin{eqnarray} \label{eq:SNS-a}
\partial_t\bfu+(\bfu\cdot\nabla)\bfu &=& \mu\Delta\bfu-\nabla \pi+\sum_k(\bfsigma_k\cdot\nabla)\bfu\circ\dot{W}_k,\\
\Div \bfu &=& 0.
\end{eqnarray}
\end{subequations}
Consequently, to consider transport noise in the Navier--Stokes equations accounts for the impact of infinitely strong friction and additive noise on the small-scale dynamics. A similar result for 3D flows was proved in~\cite{DP}. In this setting, however, the validity of the model for the small-scale dynamics is less clear.
We also remark that transport noise already appeared in the Kraichnan model of turbulence~\cite{Kr1,Kr2}.

Notice that, even in the asymptotic case, the small-scale dynamics does not affect the balance of the large-scale energy. Therefore, the same energy law, as known for the deterministic Navier--Stokes equations holds in these cases too: 
\begin{equation} \label{eq:energy-preservation}
\partial_t {\mathscr E}(\bfu) = -\mu \int \abs{\nabla \bfu}^2 \dd x,
\end{equation}
where ${\mathscr E}(\bfu) := \tfrac{1}{2} \int \abs{\bfu}^2 \dd x$ denotes the \textit{kinetic energy}. This energy law is a consequence of the Stratonovich integration used for defining the transport noise.

\textbf{Geometric mechanics.}
A geometric mechanics perspective was taken in~\cite{HOLM2,HOLM,HOLM1}. In order to include stochastic parametrizations of unresolved
dynamics, the evolution of the flow map (\emph{i.e.}, the map that traces the position of the fluid particles) is given by\footnote{In \cite{HOLM} the model is slightly different and $\bfu(\bfq,t)$ on the right-hand side is replaced by $\bfu(x,t)$, such that $\bfq$ is not the stochastic flow.}
\begin{equation} \label{eq:2706}
\partial_t\bfq=\bfu(\bfq,t)+\sum_{k}\bfzeta_k(\bfq)\circ \dot{W}_k,
\end{equation}
where~$\bfq$ denotes the position, $\bfu$ is the (time-regular) velocity and $\{\bfzeta_k\}_k$ are time-dependent vector fields, encoding the time-irregular unresolved dynamics; see also~\cite{MiRo} and~\cite{ACC}. While the mathematical discussion of this approach does not require a particular choice of these vector fields, in applications they must be specified; \emph{e.g.}, in~\cite{Co1,Co2} they are computed numerically.

 Now the velocity field is advected by the right-hand side of \eqref{eq:2706}, and two additional terms:
\begin{equation} \nonumber
\sum_k(\bfzeta_k\cdot\nabla)\bfu\circ\dot{W}_k \qquad \text{ and } \qquad \bigg(\nabla\sum_k\bfzeta_k\circ\dot{W}_k\bigg)^\top \bfu,
\end{equation}
complement the material derivative in the equations of motion. This leads to the equations
\begin{subequations} \label{eq:SNS2}
\begin{eqnarray}  \label{eq:SNS2-a}
\partial_t\bfu+(\bfu\cdot\nabla)\bfu &=& \mu\Delta\bfu-\nabla \pi-\sum_k(\bfzeta_k\cdot\nabla)\bfu\circ\dot{W}_k-\big(\nabla\sum_k\bfzeta_k\circ\dot{W}_k\big)^\top \bfu,\\
\Div \bfu &=& 0.
\end{eqnarray}
\end{subequations}
The first additional term is transport noise and corresponds to the transportation of the fluid in random directions. The second term is linear multiplicative noise of Stratonovich-type and encodes the stretching of the fluid. 

\subsubsection{Multiplicative noise}

\begin{framed}
Given a velocity field~$\bfv$, we refer to the following noise as \textit{multiplicative noise}: 
\begin{equation} \tag{multiplicative noise} \label{def:multi-Noise}
\sum_k \int \bfsigma_k(\bfv) \dd W_k,
\end{equation}
where $\{W_k\}_k$ are as in Example~\ref{ex:AdditiveNoise-intro}, and $\{\bfsigma_k\}_k$ are vector fields which depend on temporal and spatial variables, as well as the velocity field.
\end{framed}
Sometimes operator-valued (instead of vector field-valued) coefficients~$\{\bfsigma_k\}_k$ and Hilbert space-valued (instead of real-valued) random variables~$\{W_k\}_k$ are considered; more details can be found, \emph{e.g.}, in~\cite[Chapter~2]{Liu2015}. But we neglect these general assumptions to simplify the notation. 

Multiplicative noise generalizes additive noise by allowing the coefficients to additionally depend on the velocity field. As we have seen in~\eqref{eq:SNS2-a} (even though Stratonovich integration was used), this additional dependence is needed, \emph{e.g.}, to account for random stretching effects. In this context, assuming that transport effects are negligible (\emph{i.e.}, $\abs{\bfzeta_k} \approx 0$ but $\abs{\nabla \bfzeta_k} \approx 1$), the evolution in its It\^o formulation is governed by:  
\begin{subequations} \label{eq:SNS3}
\begin{eqnarray}  \label{eq:SNS3-a}
\partial_t\bfu+(\bfu\cdot\nabla)\bfu &=& \mu\Delta\bfu-\nabla \pi + \bfLambda \bfu -\sum_k \bfsigma_k(\bfu)  \dot{W}_k,\\
\Div \bfu &=& 0,
\end{eqnarray}
\end{subequations}
with $\bfsigma_k(\bfu) = (\nabla \bfzeta_k)^\top \bfu$ and $\bfLambda = \frac{1}{2} \sum_k \big(\nabla \bfzeta_k \big)^\top \big(\nabla\bfzeta_k\big)^\top$. As a consequence, the Navier--Stokes equations driven by multiplicative noise is the right model.  
 
\subsection{Phenomenological differences of these noises} 
So far, we have seen that all three noises -- ranging from externally induced additive noise to intrinsically motivated transport and multiplicative noises -- are used to encapsulate different aspects of unresolved and typically irregular scales of turbulent flow. But how do different noises trigger different dynamics, and how do they affect hydrodynamic stabilities which {\em e.g.} manifest by the formation and interaction of vortexes? Of course, it is not only the type of the noises here which determines different dynamics; but also the spatial structure of the noise -- being {\em e.g.} rough or smooth -- exerts a crucial impact in this regard. 

To motivate the different quantitative impact of each noise, we here present various comparisons of the dynamics for the \textit{lid-driven cavity} experiment ({\em i.e.}, Example~\ref{ex:lid-driven} but for the Navier--Stokes equations). For this, we use the noises that are given in Example~\ref{ex:vortexes}; these are generated by a common spatial shape function and they only differ in how the randomness interacts with it. 

In the following two subsections, we report on different dynamics for single realizations as well as on statistical differences which we observe in simulations conducted within the same numerical setting (\emph{i.e.}, parameters, meshes in space and time, etc.). 

\subsubsection{Pathwise differences}
Even though one particular realization of the dynamics is usually not representing the full spectrum of all possible evolution dynamics, a comparison of realisations between different noises provides a first indication for their quantitative differences; \emph{e.g.}, in the lid-driven cavity experiment (\emph{cf.} Figure~\ref{fig:compareVelocity}), additive noise supports the formation of multiple small vortices, while multiplicative noise accelerates the large central vortex; and transport noise reduces the action of the boundary conditions on the interior velocity, leading to a slower velocity in comparison with the deterministic case.

\subsubsection{Statistical differences} 
Comparing the statistics of the velocity fields (\emph{i.e.}, their distribution) instead of a single realization eliminates the chance of selecting a \textit{rare event} -- an outlier that differs substantially form other realisations and that does not show the typical behaviour. However, a visual representation of these statistics is infeasible, since the distribution is a probability measure on an infinite-dimensional space. Thus, only finite dimensional sub-statistics can be illustrated, such as \textit{statistical quantities}, \textit{marginal distributions} or the action of the distribution against an \textit{observable}. Each sub-statistics focuses on different aspects of the dynamics, which we will discuss next.

\textbf{Statistical quantities} (\emph{e.g.}, the mean-value, median, 
quantiles or standard deviation) aggregate the data to characterise the shape of the distribution. For example, for distributions whose standard deviation (\emph{cf.} Figure~\ref{fig:compareVelocity_SD}) is small, the mean-value (\emph{cf.} Figure~\ref{fig:compareVelocity_mean}) is a good representation for the behaviour of all possible trajectories at the same time. This aggregation eliminates information about the individual events, but preserves spatial information (\emph{i.e.}, it is still a function in space).

In the lid-driven cavity experiment, we observe in Figure~\ref{fig:compareVelocity_mean} that the expected velocity fields display different numbers, locations, and geometric details of arising vortices if compared to the pathwise simulations in Figure~\ref{fig:compareVelocity}.

\begin{figure*}[t!]
    \centering
    \begin{subfigure}[t]{0.33\textwidth}
        \centering
        \includegraphics[width=1.0\textwidth]{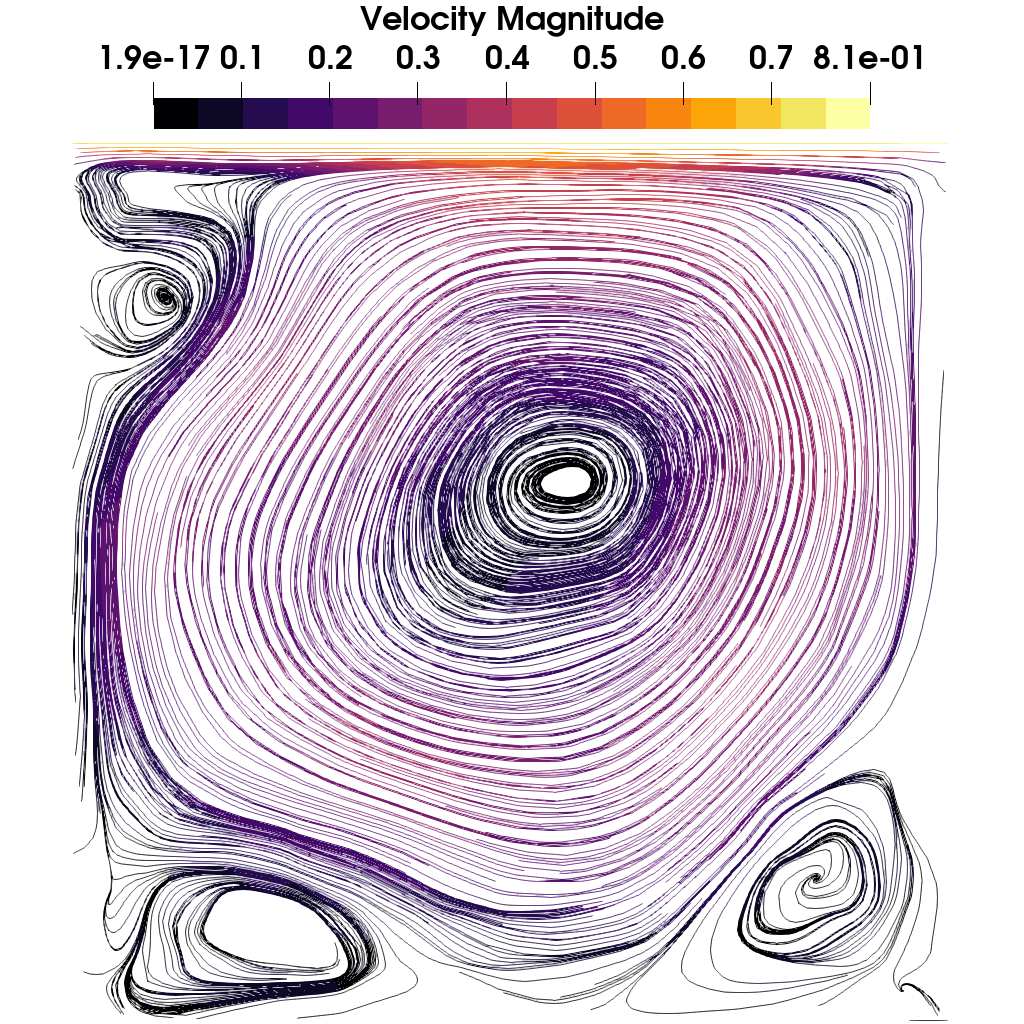}
    \end{subfigure}%
    ~ 
    \begin{subfigure}[t]{0.33\textwidth}
        \centering
        \includegraphics[width=1.0\textwidth]{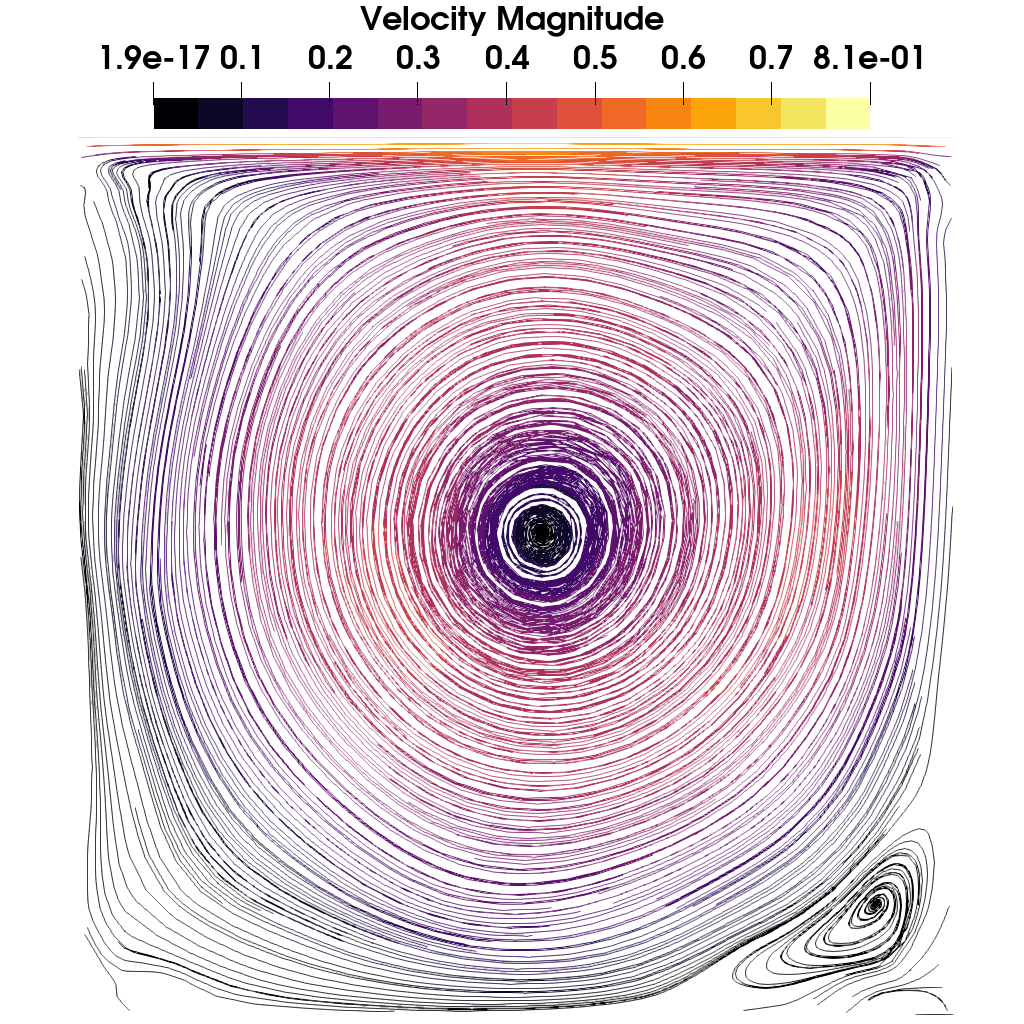}
    \end{subfigure}%
    \begin{subfigure}[t]{0.33\textwidth}
        \centering
        \includegraphics[width=1.0\textwidth]{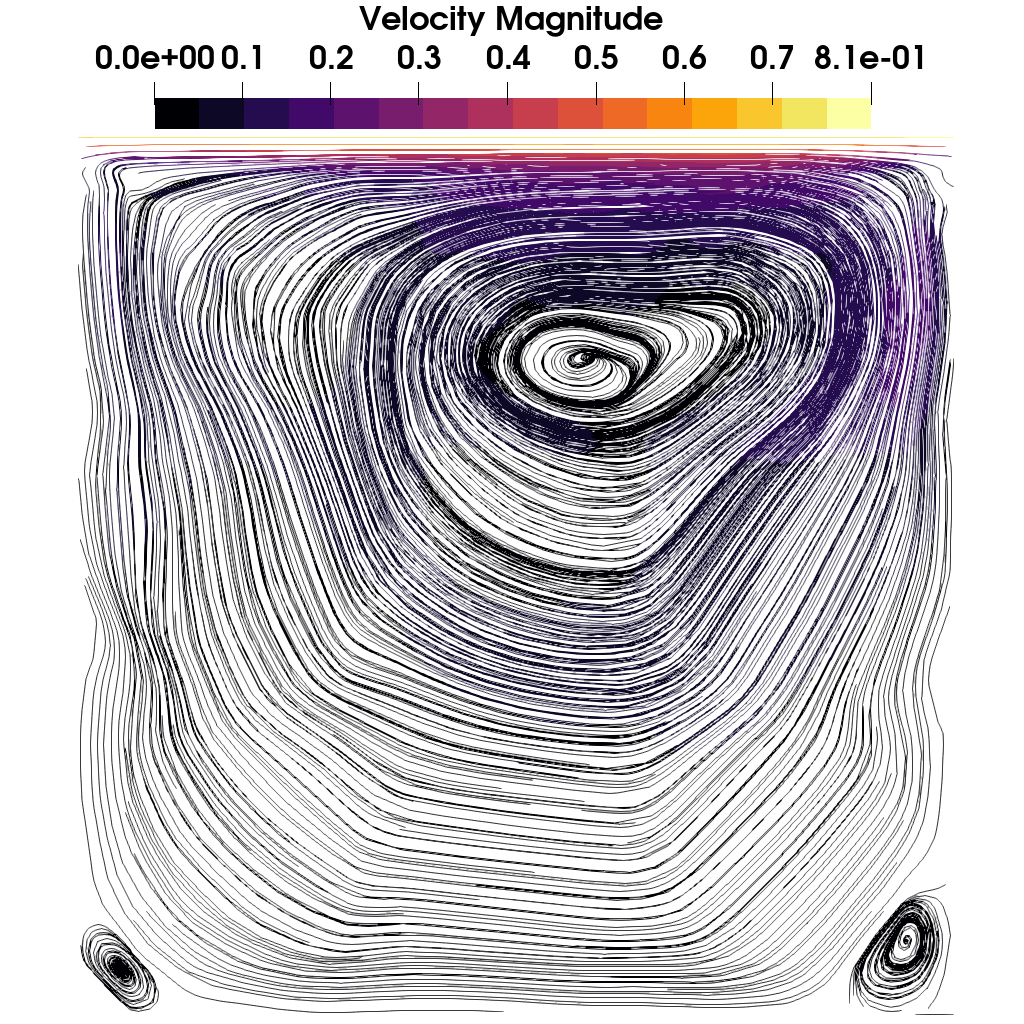}
    \end{subfigure}
    \caption{Snapshots at a common time of the expected (the average over all realizations) velocity field streamlines for the lid-driven cavity experiment (Example~\ref{ex:lid-driven}) forced by additive noise (left), multiplicative noise (middle) and transport noise (right).} 
    \label{fig:compareVelocity_mean}
\end{figure*}

\begin{figure*}[t!]
    \centering
    \begin{subfigure}[t]{0.33\textwidth}
        \centering
        \includegraphics[width=1.0\textwidth]{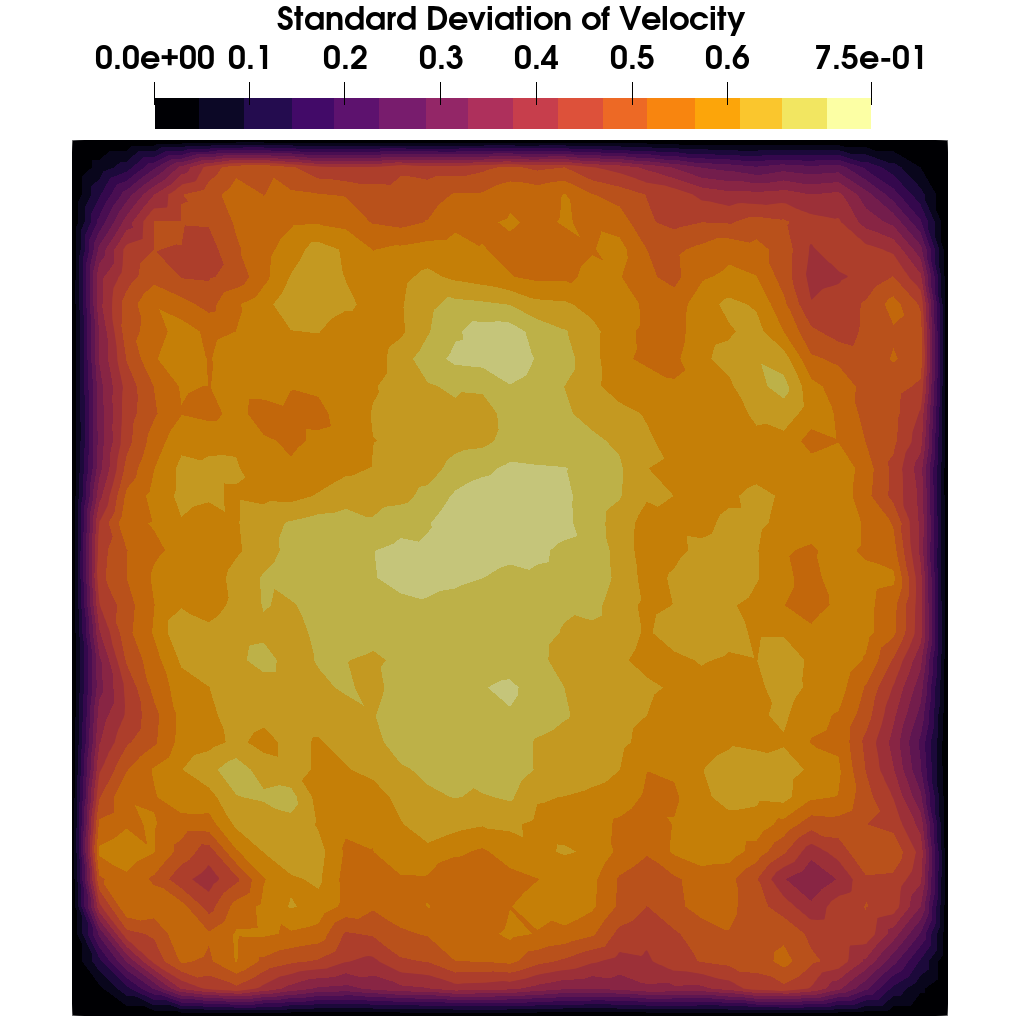}
    \end{subfigure}%
    ~ 
    \begin{subfigure}[t]{0.33\textwidth}
        \centering
        \includegraphics[width=1.0\textwidth]{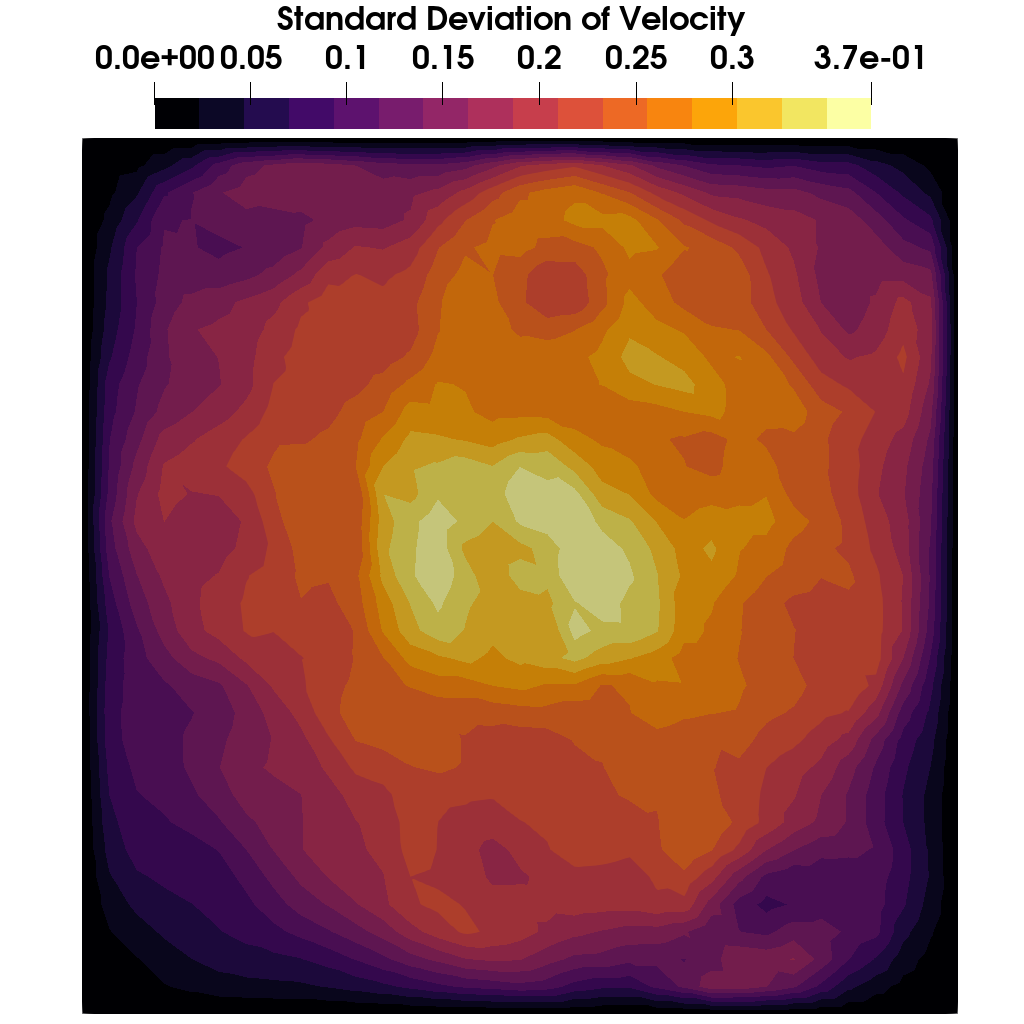}
    \end{subfigure}%
    \begin{subfigure}[t]{0.33\textwidth}
        \centering
        \includegraphics[width=1.0\textwidth]{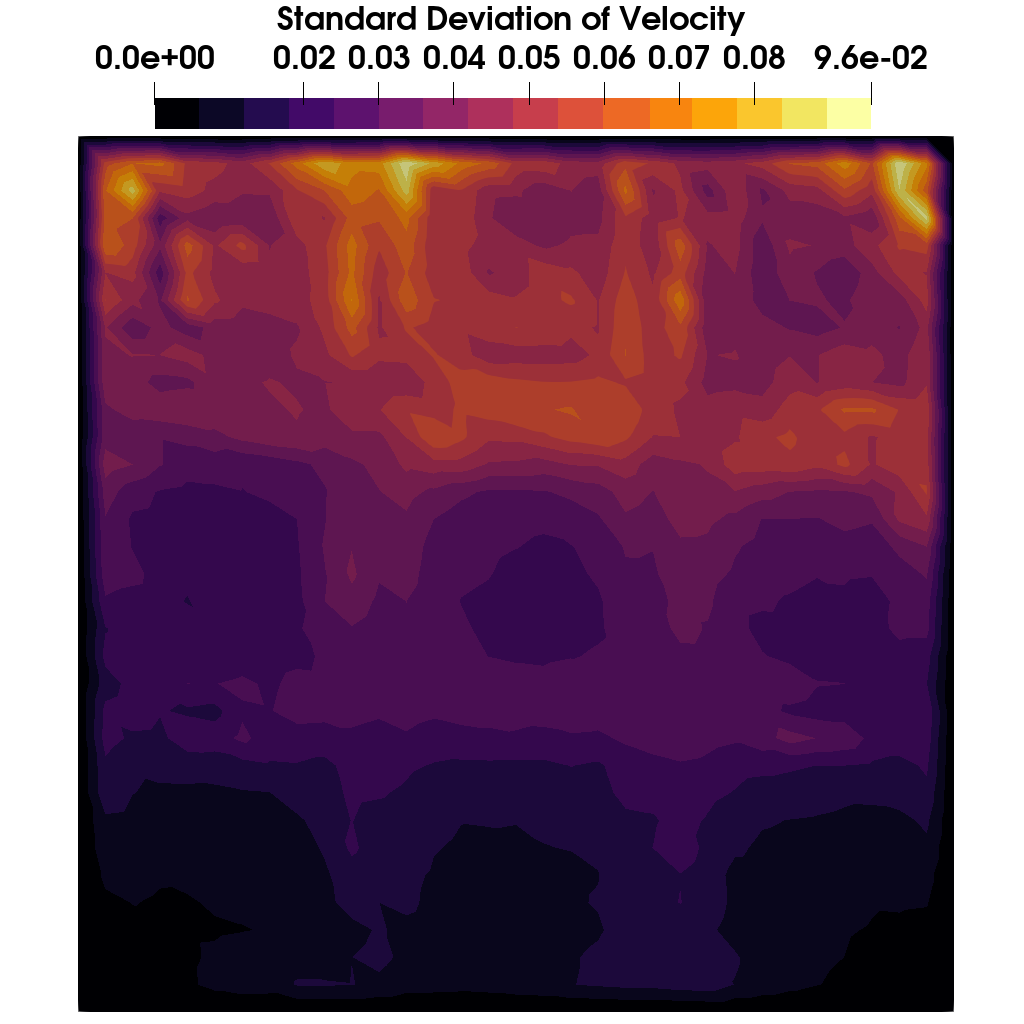}
    \end{subfigure}
    \caption{Snapshots at a common time of the standard deviation of the velocity field for the lid-driven cavity experiment (Example~\ref{ex:lid-driven}) forced by additive noise (left), multiplicative noise (middle) and transport noise (right). Colour encodes the magnitude of the standard deviation of velocity.} 
    \label{fig:compareVelocity_SD}
\end{figure*}


\textbf{Marginal distributions} (\emph{i.e.}, the distribution at a particular location) reduce the dimension of the problem by fixing the location. It can be considered as placing a probe into the fluid at this particular location. Consequently, the distribution of a $2$-dimensional random vector can be plotted easily (\emph{cf.} Figure~\ref{fig:velocity_marginals}). An advantage of this approach is that the full distribution is visualized, but only at one spatial point. 

\begin{figure*}[t!]
    \centering
    \begin{subfigure}[t]{0.33\textwidth}
        \centering
        \includegraphics[width=1.0\textwidth]{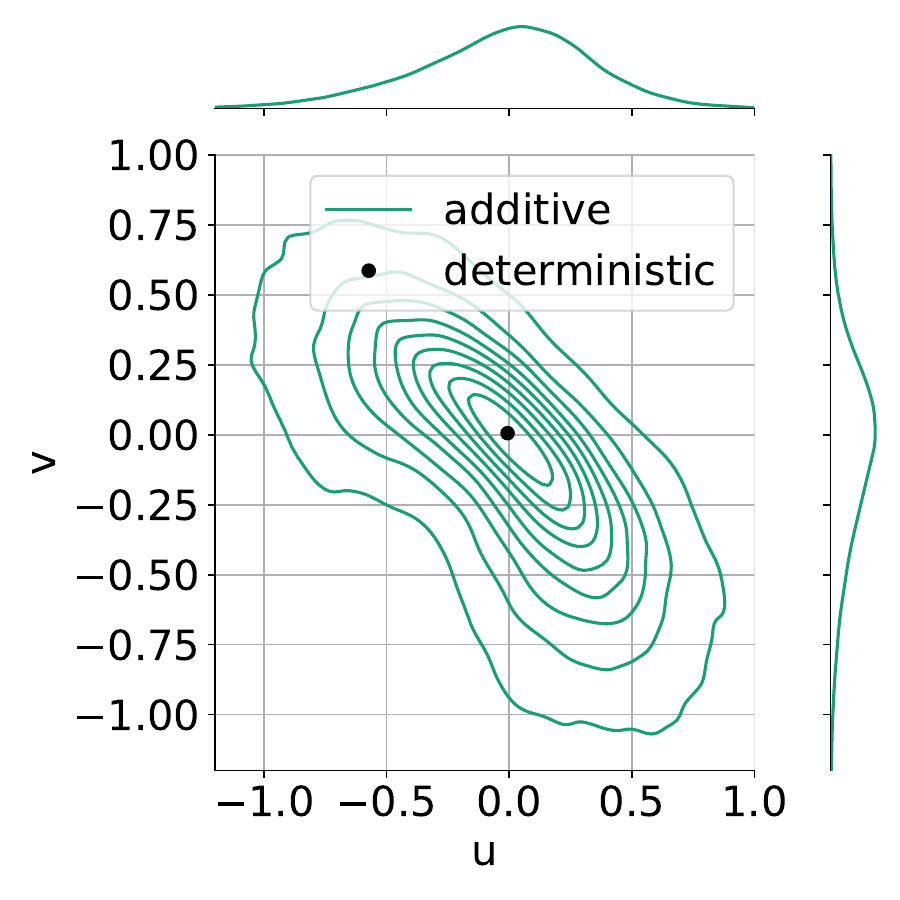}
    \end{subfigure}%
    ~ 
    \begin{subfigure}[t]{0.33\textwidth}
        \centering
        \includegraphics[width=1.0\textwidth]{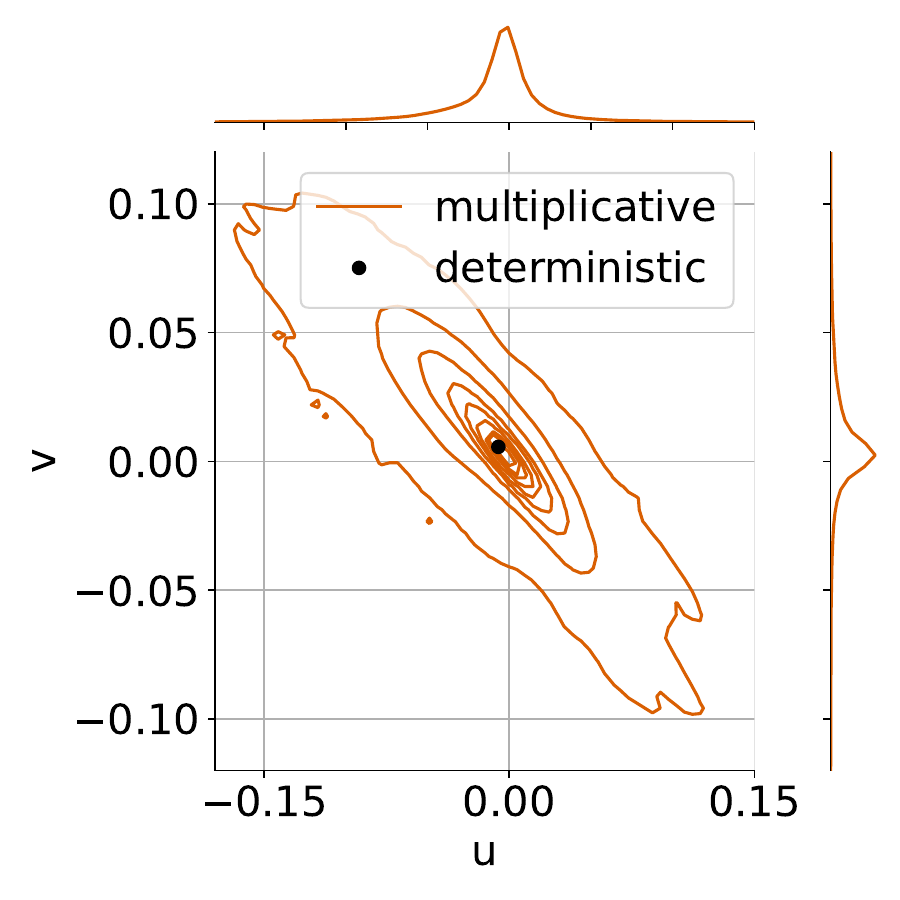}
    \end{subfigure}%
    \begin{subfigure}[t]{0.33\textwidth}
        \centering
        \includegraphics[width=1.0\textwidth]{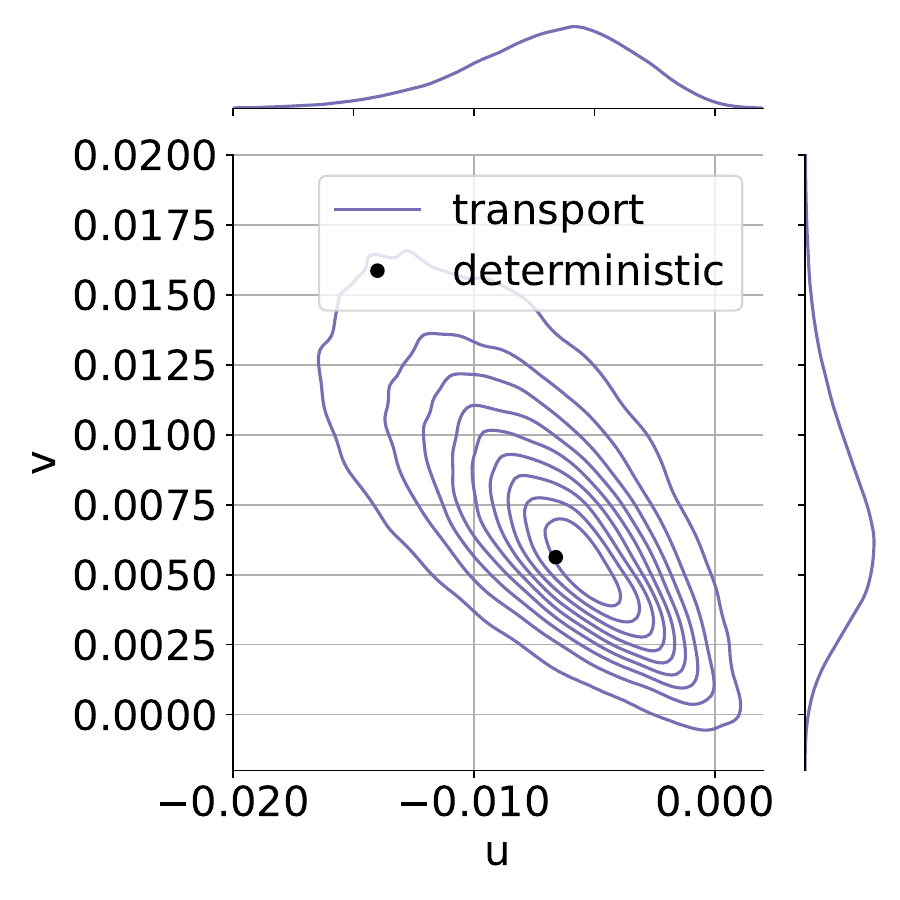}
    \end{subfigure}
    \caption{Probability density function (visualised by level sets) of the 2D marginal distribution at $(0.875,0.875)$ and a common time of the velocity field~$\bfu = (u, v)^\top$ for the lid-driven cavity experiment (Example~\ref{ex:lid-driven}) forced by additive noise (left), multiplicative noise (middle) and transport noise (right). Black dots indicate the deterministic velocity vector. The 1D probability density functions of each velocity component are displayed at the top and right, respectively. 
    } 
    \label{fig:velocity_marginals}
\end{figure*} 

\textbf{The action against observables} is a generalization of the statistical quantities to include physical quantities of interest (\emph{e.g.}, the kinetic energy). In general, one specifies a function (the observable). Then, one evaluates the function at the (random) velocity field and computes the expected value. Choosing a particular observable, such as the kinetic energy, enables the physical interpretation of these results (\emph{cf.} Figure~\ref{fig:evolution_energy}): \emph{e.g.}, in the lid-driven cavity experiment, additive noise and multiplicative noise are accelerating the velocity field, while transport noise is slowing it down: in the additive case the velocity increases rapidly until it reaches its steady state (approximately $20$-times higher than the deterministic level), while for multiplicative noise it accumulates slowly, and for transport noise its steady state, which is below the deterministic level, is reached the fastest. 

\begin{figure*}[t!]
    \centering
    \begin{subfigure}[t]{0.5\textwidth}
        \centering
        \includegraphics[width=1.0\textwidth]{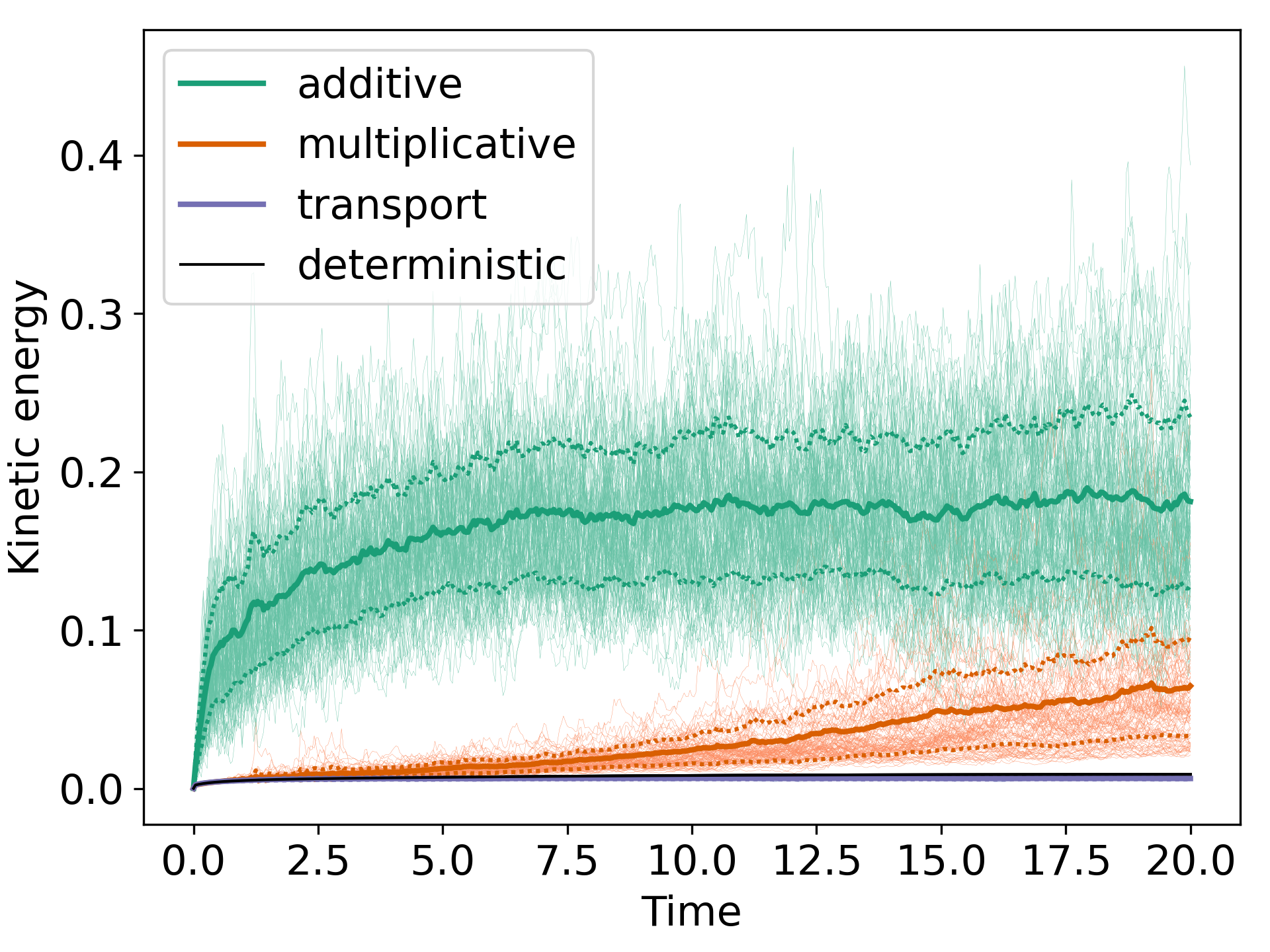}
    \end{subfigure}%
    \begin{subfigure}[t]{0.5\textwidth}
        \centering
        \includegraphics[width=1.0\textwidth]{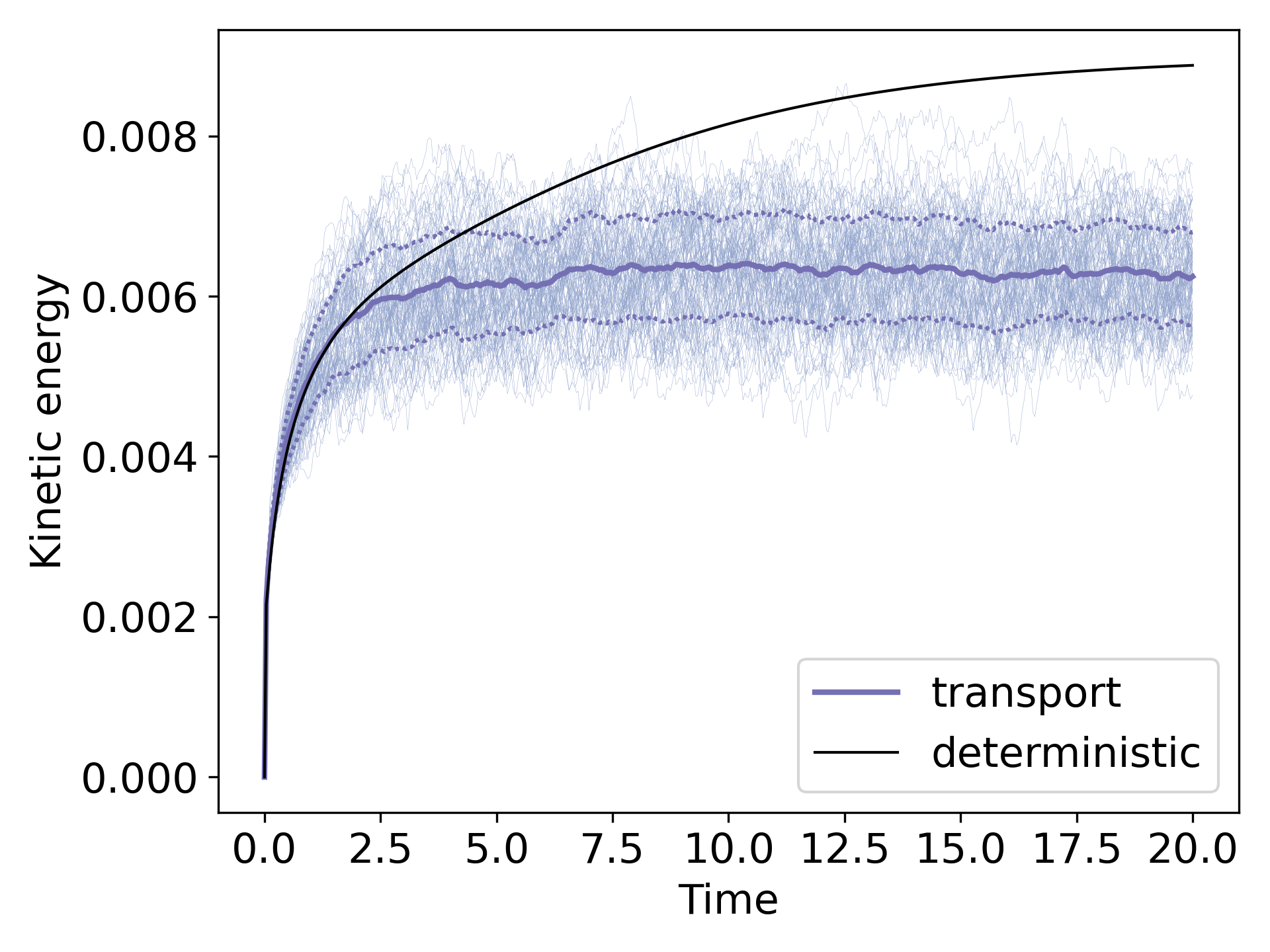}
    \end{subfigure}%
    \caption{Time evolution of the kinetic energy for the lid-driven cavity experiment (Example~\ref{ex:lid-driven}). Thick lines and dotted lines show the mean energy and the mean energy plus or minus one standard deviation, respectively. Individual energy trajectories are shown in pale colours -- left: all noises; right: transport noise only. } 
    \label{fig:evolution_energy}
\end{figure*}

\subsection{A short review about the mathematical theory}
The first mathematical contribution dates back to 1973: in \cite{Bensoussan1973} a particular noise was chosen which allowed the transformation to a random PDE. The latter was solved without tools from stochastic analysis but the equivalence to the original problem remained unclear. A truly stochastic theory was introduced in \cite{Flandoli1995}. General multiplicative noise, possibly nonlinear with respect to the velocity field, could be included. The current focus is on noise with physically motivated structure and with possible regularisation effects. A recent overview can be found in \cite{Flandoli2023}.

In several interesting applications (such as atmospheric flows) one of the spatial directions is of minor importance and can be neglected. In this case one considers
the two-dimensional problem. For the two-dimensional stochastic Navier--Stokes equations  a counterpart of the classical deterministic theory by Ladyshenskaya is known:
solutions are unique and as spatially regular as the data. First contributions are \cite{MR1140758,MR1233785} and an exhaustive picture can be found in \cite{MR3443633}. 
%
%

\subsection{The aims of this article}
Since quantitative predictions in turbulent fluid flows are of particular importance in applications, algorithms that perform reliably for a broad class of relevant noises must be designed and, most importantly, rigorously analysed. While some algorithms, that were originally developed for the deterministic Navier--Stokes equations, have been successfully generalised to the stochastic case; it is that others fail due to the noise-induced irregularity of solutions. 

In this article, we particularly focus on a systematic introduction into various time-discretisations of the stochastic Stokes and Navier--Stokes equations, highlighting the original motivation for the algorithms, as well as their individual advantages and disadvantages. For this reason, we explain key differences between the deterministic and the stochastic fluid flow model for the Stokes problem, and why numerical tools from the deterministic setting fail to perform optimally in the stochastic case.\footnote{The numerical approximation of the stochastic Stokes equations is only a first step towards the approximation of more realistic stochastic fluid flow equations, such as the stochastic  Navier--Stokes equations.} Based on the analytic insights, we provide modified numerical schemes which restore the optimal performance in the stochastic setting as well. For the sake of readability, we keep most arguments on an informal level, highlighting conceptual ideas and referring to the related literature for their detailed discussion. For the stochastic Navier--Stokes equations, we only mention related results in the literature to keep the technical presentation at an easily accessible level.  

Another goal in this article is to improve the {\em comparability} of different numerical schemes by proposing a collection of benchmark problems that test, \emph{e.g.}, the performance of these schemes in critical parameter settings or in situations when the exact solution is known. In the deterministic setting, it is that benchmarks do exist, where academic problems as well as physically relevant fluid-flow problems are assembled to compare different algorithms in terms of accuracy, computing times, and range of applicability. Unfortunately, a corresponding benchmark is still missing in the current setting where the dynamics is also driven by different noises, which is why we propose a set of problems on which we test and compare different algorithms.
To increase their visibility, we present a detailed derivation of these exact solutions for simple noises, and propose concrete parameter configurations with all the necessary information in one place to implement them.  

We use this benchmark for comparative simulations to not only motivate interesting different non-linear dynamics for varying noises (see Figure~\ref{fig:compareVelocity} to Figure~\ref{fig:evolution_energy}), but also to compare complexities of methods as well as induced errors which arise if different numerical methods are used for their approximation. This ranges from consistent methods up to efficient splitting methods, which significantly reduce the computational times.

\subsection{Structure of the article}
After briefly introducing the notation and commenting on how the illustrations of this article are created in Section~\ref{sec:preliminaries}, we split the main part of the article into three, mostly separate sections: 

\textbf{Section~\ref{sec:Num-Stokes}: Stokes equations.} In this section we focus on time-discretisations for the stochastic Stokes equations. We start by recalling analytic results about the regularity of the velocity and the pressure. Afterwards, we provide an overview over the existing literature for the numerical approximation of the equations with a special emphasize on the historical developments. Lastly, in dedicated subsection, we discuss four time-discretisations in detail.

\textbf{Section~\ref{sec:Num-NSE}: Navier--Stokes equation.} We outline the difficulties which arise in the discretisation of the Navier--Stokes equations due to the nonlinearity additionally to those already present for the Stokes equations. The known results for the two-dimensional problem are surveyed and compared to the case of the Stokes system. We close the section by listing some problems which remain open so far.

\textbf{Section~\ref{sec:benchmarks}: Benchmark problems.} To ensure the comparability of different algorithms not only in this article but in between other articles as well, we collect multiple benchmark problems that either can be used to validate the theoretical findings or to challenge the algorithm in demanding more general parameter configurations. For this, we present two approaches: firstly, for each of the three noises we construct explicit solutions so that the exact approximation errors can be computed; and secondly, we provide two examples that have already been used but for which no explicit solutions are known. All benchmark problems are self-contained and hopefully will serve as
a common set of problems in the future
to better compare the performance of different algorithms.  

Finally, in Section~\ref{sec:conclusion} we close the article by reflecting the previous three sections in regards to the aims of the article.

\section{Preliminaries} \label{sec:preliminaries}

\subsection{Notation} \label{sec:Notation}

Let $D =(0,1)^d$, for $d \in \{2,3\}$, be the considered domain, let $T>0$ be the final time, and let $(\Omega, {\mathcal F}, \{{\mathcal F}_t\}, {\mathbb P})$ be a filtered probability space. All probabilities will be expressed with respect to this probability space. For a random variable~$X$, we denote by $\mathbb{E}[X] = \int_{\Omega} X(\omega) \dd \mathbb{P}(\omega)$ its expectation. Let $W$ be a real-valued $\{ {\mathcal F}_t\}$-Wiener process.

We distinguish vector-valued functions from scalar ones by using bold symbols. For two vectors~$\bfa$ and $\bfb$, we denote by $\bfa \cdot \bfb$ the Euclidean inner product, and by $\abs{\bfa}$ its induced norm. Let $c$ and $C$ denote generic non-negative constants that may change their value from line to line. 

Lebesgue and Sobolev spaces over $D$ with integrability~$p$ and differentiability~$k$ for scalar and vector-valued functions are denoted by: $L^p_x$ and $W^{k,p}_x$, and $\mathbb{L}^p$ and $\mathbb{W}^{k,p}$, respectively. In the special case $p=2$ these spaces are abbreviated by~$H^k_x$ and $\mathbb{H}^k$, respectively. We denote the inner product in $\mathbb{L}^2$ by~$\left(\bfa, \bfb \right)$ for $\bfa, \bfb \in \mathbb{L}^2$ and its corresponding norm by $\norm{\bfa}_{\mathbb{L}^2}$.

For a Banach space~$Y$, we similarly define $L^p(0,T;Y)$ and $W^{k,p}(0,T;Y)$ as the space of Bochner measurable functions with values in $Y$, integrability~$p$ and differentiability~$k$. From time to time, we use the shorter notation $L^p_t Y$ and $W^{k,p}_t Y$ instead. Analogously, we define~$L^p(\Omega;Y)$ (or in short $L^p_\omega Y$) as the space of $Y$-valued random variables with finite $p$-th moments. We denote by~$L^{\infty-}_\omega = \bigcap_{p < \infty} L^p_\omega$.

\subsection{Discretisation parameters and implementational details}

Let $N \in \mathbb{N}$ be the number of time steps, $\tau := \frac{T}{N}$ be the time step size, $\{ t_n := n\tau\}_{n=0}^N \subset [0,T]$ denote the collection of nodes of the equi-distant time grid, and $\{\Delta_n W:= W(t_{n+1}) - W(t_{n}) \}_{n=0}^{N-1}$ be the Wiener increments on this time grid.

Even though in this article we exclusively focus on the discussion of various time discretisation, we still need to introduce some details for the spatial discretisation: 

\textit{Mixed finite elements}\footnote{For an introduction into finite elements, we refer to \emph{e.g.} \cite{EG21}.} are defined in terms of an underlying mesh (also called triangulation) and functions (usually polynomials) defined on this mesh for both, velocity fields and pressure. Depending on the choice of tuples of (finite-dimensional) function spaces, some are \textit{stable pairings},\footnote{The stability corresponds to the famous Ladyzhenskaya–Babuška–Brezzi-condition (also called LBB- or inf-sup-condition) which is needed for the well-posedness of saddle point problems.} and hencewith well-suited for approximating velocity and pressure simultaneously:
one famous example of such a stable pair is the \textit{Taylor--Hood element} (\emph{cf}. \cite{Taylor1973}). The lowest order Taylor Hood element uses continuous, piecewise quadratic vector fields and continuous, piecewise linear scalar functions for the approximation of velocity field and pressure, respectively. 

To approximate the expectation and other non-computable quantities (\emph{e.g.}, the standard deviation) of random variables, we employ the \textit{Monte-Carlo method}: \emph{i.e.}, we repeat the same experiment with a new random sample multiple times; the results are then used to compute statistical approximations of the non-computable quantities of interest.

All illustrations in this article use the lowest order Taylor--Hood element defined on the quasi-uniform mesh of the unit square, which is generated from an equi-distant lattice with $16 \times 16$ vertices. For the Monte-Carlo method we compute $1000$ samples. Since the algorithm for the time discretization alters from figure to figure, we explicitly mention the used one in each of them.

The code used for generating the data is available at~\href{https://github.com/joernwichmann/Survey}{https://github.com/joernwichmann/Survey}; it uses the open-source finite element package \textit{Firedrake}~\cite{FiredrakeUserManual}, which itself heavily relies on \textit{PETSc}~\cite{petsc-user-ref}.


\section{Numerics of the stochastic Stokes equations} \label{sec:Num-Stokes}
While the numerical approximation of the stochastic Stokes equations is only a first step towards the approximation of more realistic stochastic fluid flow equations, such as the stochastic  Navier--Stokes equations, its detailed understanding is a prerequisite for the understanding of the latter. 

In this section we summarise and comment on existing algorithms for the approximation of the stochastic Stokes equations, provide a theoretical reasoning why some efficient algorithms that are borrowed from the deterministic setting do not perform optimally, and how these time-splitting methods may be changed to do so again. We accommodate the discussion with computational experiments, some of which suggest further phenomena which so far are not understood theoretically.

We begin by introducing the stochastic Stokes equations forced by a generic noise. Following that, we briefly discuss theoretical results about solutions to the stochastic Stokes equations with a particular focus on the regularity of the pressure. Afterwards, we review the state of the art of numerical algorithms. Finally, in dedicated subsections we introduce some of these algorithms in detail, highlighting their advantages and disadvantages in particular.

\subsection{Model and theoretical results} \label{sec:model-and-theory}
The stochastic Stokes equations are given by:
\begin{framed}
\begin{subequations} \label{def:gen-stokes}
\begin{eqnarray} \label{eq:gen-stokes1}
{\rm d}{\bf u} - \bigl( \mu \Delta {\bf u} - \nabla p \bigr){\rm d}t &=&
{\bf f} {\rm d}t  + \dd \bfZ_{\bfu}  
\qquad \mbox{on } \Omega \times (0,T) \times D,\\  \label{eq:gen-stokes2}
{\rm div} \, {\bf u} &=& 0 \qquad \qquad \qquad  \mbox{on } \Omega \times (0,T) \times D, \\
{\bf u}(0) &=& {\bf u}_0 \qquad \qquad \quad \,\,\,  \mbox{on } \Omega  \times D,
\end{eqnarray}
\end{subequations}
\end{framed}
which are supplemented by periodic boundary conditions. Here, $\mu > 0$ is a parameter that quantifies the strength of the diffusion (for fluids the value is determined by the particular viscosity of the fluid), ${\bf f}: (0,T) \times D \rightarrow {\mathbb R}^d$ is an external body force, $\bfu_0: \Omega  \times D \rightarrow \mathbb{R}^d$ is a given initial vector field (a snapshot of the fluid's velocity at time $t = 0$), and $\bfZ_{\bfu}: \Omega \times (0,T) \times D \rightarrow \mathbb{R}^d$ is a generic noise that generally depends on the solution; its structure is defined by the particular model under consideration. The solution consists of two parts: a vector field $\bfu: \Omega \times (0,T) \times D \rightarrow \mathbb{R}^d$ and a scalar function $p: \Omega \times (0,T) \times D \rightarrow \mathbb{R}$ which we commonly refer to as the velocity and pressure in the fluid, respectively. 

We introduce the generic process $\bfZ_{\bfu}$ to emphasize that the following theoretical discussion does not depend on the particular structure of the noise. However, later we will exclusively choose $\bfZ_{\bfu}$ to be either \textit{additive noise}, \textit{multiplicative noise} or \textit{transport noise} since these structures are strongly motivated by physics and mathematics as we discussed in the introduction.

The analysis of the stochastic Stokes equations -- addressing existence and regularity of solutions -- usually uses the Hilbert space
\begin{equation} \label{eq:Space-exact-div-free}
{\mathbb V} = \{ \pmb{\varphi} \in {\mathbb H}^1|\, {\rm div} \, \pmb{\varphi} = 0\}
\end{equation}
of {\em exactly} divergence-free functions to {\em first} construct velocity without considering pressure. Then, \textit{a posteriori}, a pressure is reconstructed from the already assembled velocity by utilising the Helmholtz (also sometimes called Helmholtz--Hodge) decomposition or de Rham's theorem; see, \emph{e.g.},~\cite{Langa2003} and the references therein. Since we will use this decomposition frequently, we provide more details on its definition.

\textbf{Helmholtz decomposition.} Any vector field~$\bfv \in \mathbb{L}^2(D)$ can be decomposed into an $\mathbb{L}^2(D)$-orthogonal sum of two vector fields; the first one is incompressible and the second one is the gradient of a scalar function. The Helmholtz--Leray projection $P_{\tt HL}: \mathbb{L}^2(D) \to \mathbb{L}^2(D)$ enables an explicit representation of this decomposition, \emph{i.e.}, 
\begin{equation} \tag{HD} \label{def:Helmholtz}
\bfv = P_{\tt HL}\bfv + P_{\tt HL}^\perp \bfv.
\end{equation} 
This projection is defined by $P_{\tt HL} \bfv := \bfv - \nabla \Delta^{-1} \Div \bfv$, where $\Delta^{-1}$ denotes the solution operator to the Poisson equation. Notice that it is linear and trivially stable on $\mathbb{L}^2(D)$. Moreover, it inherits higher-order stability, such as $\mathbb{H}^k(D)$-stability, $k> 0$, from well-known regularity theory for the Laplace equation.

\textbf{Velocity construction.}
As pointed out before, the velocity is constructed independently of the pressure. To do this, one first applies~$P_{\tt HL}$ to~\eqref{eq:gen-stokes1}. By noticing that gradients belong to the kernel of the Helmholtz--Leray projection (\emph{i.e.}, $P_{\tt HL} \nabla = 0$), one arrives at the decoupled velocity evolution equation:
\begin{equation} \label{eq:decoupled-velocity}
{\rm d}{\bf u} + \mu \bfA {\bf u} {\rm d}t =
P_{\tt HL} {\bf f} {\rm d}t  + P_{\tt HL} \dd \bfZ_{\bfu}  
\qquad \mbox{on } \Omega \times (0,T) \times D,
\end{equation}
where $\bfA := -P_{\tt HL} \Delta$ is called the \textit{Stokes operator}. To construct solutions to~\eqref{eq:decoupled-velocity} by abstract analytical tools is a standard procedure: \emph{e.g.}, one typically uses a finite system of orthonormal functions that is generated by the eigenfunctions of the Stokes operator; while its {\em existence} is known from the spectral theory for the Stokes operator, its de-facto {\em construction} is complicated on general domains. 
This is is why such a system of functions is often avoided for computing approximative solutions.

\textbf{Pressure reconstruction.} Once the velocity field has been constructed, a pressure can be reconstructed by utilizing the complementary Helmholtz--Leray projection~$P_{\tt HL}^\perp$. More specifically, applying~$P_{\tt HL}^\perp$ to~\eqref{eq:gen-stokes1} provides us with the reconstruction formula for pressure: 
\begin{equation} \label{eq:reconstruct-pressure}
 \nabla p {\rm d}t = - P_{\tt HL}^\perp {\rm d}{\bf u} + \mu P_{\tt HL}^\perp\Delta {\bf u} {\rm d}t +
P_{\tt HL}^\perp{\bf f} {\rm d}t  + P_{\tt HL}^\perp \dd \bfZ_{\bfu}  
\qquad \mbox{on } \Omega \times (0,T) \times D.
\end{equation}
Notice that, since in general both projections of velocity and noise are non-differentiable in time, the pressure has to be interpreted in a time-integrated manner (as the original pressure is merely a distribution in time; see, \emph{e.g.},~\cite{Langa2003,Wichmann2024} for more details). This is in contrast to the deterministic case, in which the pressure is a well-defined function -- and the irregular pressure will conceptually affect the way how reliable and efficient schemes can be constructed for the approximation of the stochastic Stokes equations. We will write $P(t) := \int_0^t p(s) \dd s$ and call it the \textit{time-integrated pressure}. 

It is equation~\eqref{eq:reconstruct-pressure} that, at least formally, allows the time-integrated pressure to be decomposed into individual sources: 
\begin{itemize}
\item (acceleration) $P_{\tt HL}^\perp {\rm d}{\bf u}$ encapsulates pressure contributions due to the time-integrated acceleration in the fluid. In general, this term is non-differentiable, limiting the regularity of pressure; but in most cases, the velocity is sufficiently regular so that this term vanishes due to the incompressibility condition.

\item (diffusion) $\mu P_{\tt HL}^\perp\Delta {\bf u} {\rm d}t $ corresponds to the pressure generated by the diffusion. In domains without boundary (as we have assumed), the incompressibility of the velocity field ensures that this term vanishes. In domains with boundary, it has a non-trivial contribution; but its regularity is similar to the deterministic case, meaning that it is not diminishing the regularity of the pressure.
  
\item (body force) $P_{\tt HL}^\perp{\bf f} {\rm d}t$ is the pressure that is created by an external force. It behaves as in the deterministic case. For sufficiently regular forces, this term does not create any difficulties. However, if the force is too rough, it can lead to the ill-posedness of the pressure; see, \emph{e.g.},~\cite{Simon1999} and~\cite{Langa2003} for the deterministic and stochastic cases, respectively.

\item (noise) $P_{\tt HL}^\perp \dd \bfZ_{\bfu}$ is the \textit{stochastic pressure} that is created by the noise. On the one hand, as soon as the noise violates the incompressibility condition, this crucially affects the regularity of the pressure. On the other hand, if the noise complies with the constraint, then this term vanishes. Consequently, in this case the stochastic pressure does not have an effect on the time-integrated pressure, and therefore the original pressure could be studied instead. 
\end{itemize}

\begin{framed}
In summary, solutions to the stochastic Stokes equations are typically constructed in two stages: firstly, the velocity is determined independently of the pressure by solving the evolution equation restricted to the space of \textit{exactly divergence-free} vector fields; and secondly, the pressure is reconstructed a-posteriori by de Rham's theorem. In contrast to the deterministic Stokes equations, the temporal regularity of the pressure is reduced substantially, which motivates the use of the \textit{time-integrated pressure}.
\end{framed}
This analytical approach is different from the numerical one where the approximate velocity and pressure need to be computed at the same
instant. When devising numerical schemes, related finite element spaces for both entities are exposed to proper restrictions to lead to stable schemes, in particular. Only algorithms that i) can be implemented with reasonable effort; ii) terminate within reasonable time; and iii) approximate the desired state with reasonable accuracy are useful in applications. 

In the remaining part of this section, we discuss some commonly used discretizations in detail. But before we do so, we review the mathematical literature on the numerical approximation of the stochastic Stokes equations. To facilitate the comparison between different results as well as to enable the fast look-up of individual results, we give a summary of the algorithms and their respective error decay rates in Table~\ref{table:Stokes}.

\subsection{State of the art: algorithms and estimates} \label{sec:state-Stokes}
To construct {\em numerical schemes} for the stochastic Stokes equations (\emph{cf.}~\eqref{def:gen-stokes}), one has to proceed differently in comparison to the previous section. In the space-time discrete setting, it is customary to compute the approximate velocity and approximate pressure {\em simultaneously} rather than one after the other since the practical use of the discrete Helmholtz projection (which could be used to decouple the approximate velocity from the approximate pressure) reduces the efficiency of schemes.  

As a result, numerical schemes are typically in use, for which the incompressibility constraint is only met {\em approximately} rather than {\em exactly}. Violating the incompressibility in computations causes a subtle interplay of error sources that affect the related approximate velocity and pressure, which is most delicate
in the presence of {\em general ${\mathbb L}^2$-valued} noise. But this kind of noise is relevant to {\em e.g.}~address extended effects such as buoyancy, or electric effects in complex fluid scenarios -- and hence is the most relevant model, for which efficient and provably convergent algorithms need to be found.

\subsubsection{Historical developments}
As observed in~\cite{CHP1}, the interplay of ${\mathbb L}^2$-valued noise and approximate pressure is delicate: due to their different scaling, the approximate pressure {\em cannot} be estimated uniformly in time. An exception is ${\mathbb V}$-valued noise; in this case, the approximate pressure is controlled uniformly since the noise acts on the approximate velocity only. The reason for this observation is linked to the decomposition \eqref{eq:reconstruct-pressure} of the pressure.

\smallskip
Especially time-stepping algorithms, such as the Chorin method (see, \emph{e.g.},~\cite{Chorin1968,Tmam1969} for its original definition), rely on the regularity of the pressure. For the deterministic (Navier-)Stokes equations, the pressure is well-behaved and thus, the reliability of the scheme is guaranteed. Motivated by the reduced computational complexity of the scheme, it is frequently used in applications; however, as we have seen above, the pressure behaves irregularly in the stochastic case, raising the question whether the Chorin method behaves well for the stochastic Stokes equations, too.

\smallskip
A negative answer was provided by Carelli, Hausenblas, and Prohl in~\cite{CHP1}: they conducted numerical simulations using the Chorin scheme for the stochastic Stokes equations when driven by ${\mathbb L}^2$-valued noise and noticed a {\em reduced} convergence order from ${\mathcal O}(\tau^{1/2})$ to ${\mathcal O}(\tau^{1/4})$, which they traced back to the limited regularity of the approximate pressure -- whose original analogue is decomposed in~\eqref{eq:reconstruct-pressure} into differently regular parts.

They noticed that the dependence of the approximate velocity on the irregular approximate pressure -- which in~\eqref{eq:reconstruct-pressure} would correspond to the last term -- has to be removed in order to recover an optimally convergent scheme. Therefore, they introduced a {\em pressure decomposition}: in the beginning of every time-step, they removed the rough (in time) gradient part of the noise, which required the computation of a discrete Helmholtz projection; and then, in a second step, they used the corrected noise instead of the original one in the Chorin method. 
For this pressure-corrected Chorin method (see Algorithm~\ref{algo:semi-discrete_ChorinCorrected} below), numerical simulations now indicate its optimal convergence even for ${\mathbb L}^2$-valued noise.

A proof of this optimal convergence was later provided in \cite{FV2022} through the reformulation of the pressure-corrected Chorin method as a semi-explicit pressure stabilization method, which now exploits the {\em regular} (in time) pressure part for the stabilization. In~\eqref{eq:reconstruct-pressure} this would correspond 

\begin{landscape}
\begin{table}
\centering
\begin{tblr}{
  cells = {c},
  row{1} = {Silver},
  row{2} = {Silver},
  cell{1}{1} = {r=2}{},
  cell{1}{2} = {r=2}{},
  cell{1}{3} = {c=2}{},
  cell{1}{5} = {c=2}{},
  cell{1}{7} = {r=2}{},
  cell{6}{1} = {r=2}{},
  cell{6}{2} = {r=2}{},
  cell{8}{1} = {r=2}{},
  cell{8}{2} = {r=2}{},
  cell{13}{1} = {r=2}{},
  cell{13}{2} = {r=2}{},
  cell{13}{3} = {r=2}{},
  cell{13}{6} = {r=2}{},
  cell{13}{7} = {r=2}{},
  vlines,
  hline{1,17} = {-}{0.08em},
  hline{2} = {3-6}{},
  hline{3-13} = {-}{},
  hline{15-17} = {-}{},
}
\textbf{Ref.} & \textbf{ Noise} & \textbf{Discretization} &                                 & \textbf{Convergence Rates} (Space in which the error is measured: Rate)                                                                                                                                                               &                                                                                                                                                                                                 & \textbf{Exp.} \\
                    &                 & \textbf{Time} (-stepping)           & \textbf{Space} (FEM)                  & \textbf{Velocity}                                                                                                                                                                        & \textbf{Time-int. Pressure}                                                                                                                                                                               &                      \\
\cite{LMS1}               & mult.        & impl. Euler          & stable     & $L^{\infty}_t L^2_\omega \mathbb{L}^2:\, \tau^{\frac{1}{2}} + h^2$ & $L^{\infty}_t L^2_\omega L^2_x:\,\tau^{\frac{1}{2}} + h^2$ & Yes  \\ 
\cite{FQ1}               & mult.        & impl. Euler          & stable  & $L^{\infty}_t L^2_\omega \mathbb{L}^2 + L^2_\omega L^2_t \mathbb{H}^1: \, \tau^{\frac{1}{2}} + \tau^{-\frac{1}{2}}h$ & $L^{\infty}_t L^2_\omega L^2_x:\, \tau^{\frac{1}{2}} + \tau^{-\frac{1}{2}}h$ & Yes    \\
\cite{CHP1}               & mult.        & Chorin method          & arbitrary  & $L^{\infty}_t L^2_\omega \mathbb{L}^2 + L^2_\omega L^2_t \mathbb{H}^1:\, \tau^{\frac{1}{2}} + h + \tau^{-\frac{1}{2}}h^2$ & -- & Yes \\ 
\cite{FPV1}              &  mult.        & proj., impl. Euler & stable & $L^{\infty}_t L^2_\omega \mathbb{L}^2 + L^2_\omega L^2_t \mathbb{H}^1:\, \tau^{\frac{1}{2}} + h$ & $L^{\infty}_t L^2_\omega L^2_x:\,\tau^{\frac{1}{2}} + h$ & Yes   \\
\cite{FPV1}              &  mult.        & $\cdots$ + $\varepsilon$-quasi compr.  & arbitrary & $L^{\infty}_t L^2_\omega \mathbb{L}^2 + L^2_\omega L^2_t \mathbb{H}^1:\, \tau^{\frac{1}{2}} + h + \varepsilon^{-\frac{1}{2}} h^2$ & $L^{\infty}_t L^2_\omega L^2_x:\, \tau^{\frac{1}{2}} + h + \varepsilon^{-\frac{1}{2}} h^2$ & Yes   \\
\cite{FV2022}              &  mult.        & Chorin method  & arbitrary & $L^{2}_\omega L^2_t \mathbb{L}^2:\, \tau^{\frac{1}{4}} + \tau^{-\frac{1}{2}}h$ & $ L^2_\omega L^{2}_t L^2_x:\, \tau^{\frac{1}{4}} + \tau^{-\frac{1}{2}}h$ & Yes   \\
\cite{FV2022}              &  mult.        & $\cdots$ + proj. & arbitrary & $L^{\infty}_t L^2_\omega \mathbb{L}^2 + L^2_\omega L^2_t \mathbb{H}^1:\, \tau^{\frac{1}{2}} + h + \tau^{-\frac{1}{2}}h^2$ & $L^{\infty}_t L^2_\omega L^2_x:\,\tau^{\frac{1}{2}} + h + \tau^{-\frac{1}{2}}h^2$ & Yes   \\
\cite{Gyngy2007}            &  mult.        & impl. Euler & -- & $ L^2_\omega L^{\infty}_t \mathbb{L}^2 + L^2_\omega L^2_t \mathbb{H}^1:\, \tau^{\frac{1}{2}}$ & -- & No   \\
\cite{Gyngy2008}            &  mult.        & impl. \& expl. Euler & abstract & $ L^2_\omega L^{\infty}_t \mathbb{L}^2 + L^2_\omega L^2_t \mathbb{H}^1:\, \tau^{\frac{1}{2}} + h$ & -- & No   \\
\cite{Vo2023}            &  mult.        & impl. Milstein & stable & $ L^2_\omega L^{\infty}_t \mathbb{L}^2 + L^2_\omega L^2_t \mathbb{H}^1:\, \tau + h$ & $L^{\infty}_t L^2_\omega L^2_x:\,\tau + h$ & Yes   \\
\cite{Le2024}            &  mult.        & mod., impl. Euler & stable & $ L^2_\omega L^{\infty}_t \mathbb{L}^2 + L^2_\omega L^2_t \mathbb{H}^1:\, \tau^{\frac{1}{2}} + h$ & -- & Yes   \\
         &   &  & + div-free & $ L^2_\omega W^{\frac{1}{2},2}_t \mathbb{L}^2:\, \tau^{\frac{1}{2}} + h$ &  &    \\
\cite{2024arXiv241214316D}            &  tran.        & Crank--Nicolson & stable & -- & -- & Yes   \\
\cite{Vo2025}               & mult.        & impl. Euler          & stable  & $L^{\infty-}_\omega L^{\infty}_t \mathbb{L}^2 + L^{\infty-}_\omega L^2_t \mathbb{H}^1:\, \tau^{\frac{1}{2}} + h + \tau^{-\frac{1}{2}}h$ & $L^{\infty-}_\omega L^{\infty}_t L^2_x:\, \tau^{\frac{1}{2}} + h + \tau^{-\frac{1}{2}}h$ & Yes \\ 
\end{tblr}
\caption{Summary of the available algorithms for the numerical approximation of the stochastic Stokes equations, including their respective convergence rates for the velocity and time-integrated pressure errors, as well as whether numerical experiments have been conducted. Here, `stable' refers to the inf-sup-stability, needed for mixed finite element pairs. For splitting methods `arbitrary' pairings can be used. The notation of the spaces, in which the errors are measured, is explained in Section~\ref{sec:Notation}. Further details on the respective literature are presented in Section~\ref{sec:state-Stokes}.}
\label{table:Stokes}
\end{table}
\end{landscape}

to erasing the last term. We also refer to \cite{FPV1} for a related error analysis, as well as supporting simulations.

\smallskip
To continue our discussion, we recall that -- due to the merely ${\mathbb L}^2$-valued noise -- the pressure takes values in some Sobolev space with negative (temporal) differentiability only; see, \emph{e.g.}, \cite{Langa2003,Wichmann2024}. It is this negative differentiability that led Feng and Qui (\emph{cf.}~\cite{FQ1}) to study the convergence of the discretely time-integrated approximate pressure towards the time-integrated pressure. They showed that the time-integrated pressure error converges with optimal rate. In addition,
they conducted computational studies to evidence the failure of convergence for the (non-integrated) pressure error; see also Figure~\ref{fig:failure_chorin_inOne} (right).

\smallskip
We interpret these results as a further indication to stay cautious when generalising presumably efficient methods, that were originally designed and analysed for the deterministic Stokes equations, to the stochastic case.

\medskip
An early error analysis for a full discretisation in time {\em and} space with general ${\mathbb L}^2$-valued noise is \cite{CP} (for the stochastic Navier--Stokes equations), which consists of two steps: {\em firstly}, a temporal semi-discretization is introduced as an auxiliary problem to study temporal discretization effects,
leading to an optimal error estimate; {\em then} the second step addresses spatial discretization effects. The crucial interplay of the (discrete in time) Lagrange multipliers (the pressure) with the general ${\mathbb L}^2$-valued noise is avoided by the use of an {\em exactly} divergence free finite element method (known as `Scott-Vogelius elements'; see
\cite{SV1985}) to derive the error decay rate ${\mathcal O}(\sqrt{\tau} + h)$. 

\smallskip
As mentioned above, it is known that this discretisation in space bears certain practical disadvantages, which is why stable mixed finite element methods are studied in \cite{FQ1}. In their analysis, the {\em failure of the pointwise} incompressibility condition for the computed velocity created another error term which they estimated to be of order ${\mathcal O}(\frac{h}{\sqrt{\tau}})$. However, it remained questionable whether this time-space coupling term in the error estimate is sharp. 

\smallskip
Since the limited regularity of the (discrete) Lagrange multiplier was again the source of sub-optimality, the idea in \cite{FPV1} was to `cut off' the rough gradient part from the noise by means of a Helmholtz projection in every time step: as a consequence, the analysis then again avoids a coupling term as of
${\mathcal O}(\frac{h}{\sqrt{\tau}})$ for a spatial discretization of the implicit Euler scheme that is based on stable mixed finite element methods. However, and in comparison to the original scheme, this modified scheme is now more costly due to the additionally required Helmholtz projections of the noise in every iteration step. 

\smallskip
A negative answer on the necessity of the time-space coupling term in the error estimate proposed in~\cite{FQ1} is given in~\cite{LMS1}: the authors used semigroup theory to derive an optimal order of convergence~${\mathcal O}(\sqrt{\tau} + h^2)$ for velocity approximates in a stable mixed finite element method in combination with the implicit Euler scheme, thus avoiding the practically restrictive space-time coupling term in the error estimate. Their method of proof avoids the {\em separate} consideration of temporal and spatial errors and thus the auxiliary (semi-discrete) problem by means of a {\em direct} space-time error analysis. This approach also allows an improvement of the convergence rate with respect to the spatial discretization parameter by sharp results for the fully discrete Stokes semigroup associated to the spatial discretization by stable mixed finite elements.

\smallskip
An alternative possibility to prove error estimates without coupling terms for stable mixed finite element methods is detailed in~\cite{BD1} (though this analysis already covers the Navier--Stokes equations as well): they computed the pressure (by solving certain elliptic problems) leading to an additional stochastic integral. The pressure is still very irregular in time as noted above, but one can benefit from the martingale structure. Martingale estimates, such as the Burkholder-Davis-Gundy inequality, in combination with a refined analysis of the projection error of the pressure to the discrete pressure space lead to the convergence rate ${\mathcal O}(\sqrt{\tau} + h)$ for the velocity error without assuming any condition on the temporal and spatial discretization parameters. Moreover, a corresponding error estimate for the pressure error can be deduced by means of the equation itself. 

\subsubsection{Further related results}
In~\cite{Gyngy2007,Gyngy2008}, Gy\"ongy and Millet investigated time-discrete and fully discrete algorithms that combine the implicit or explicit Euler methods with a generic spatial discretization (\emph{e.g.}, wavelets or finite differences) for abstract stochastic evolution equations forced by multiplicative noise. They showed that these algorithms converge at certain rates under certain conditions on the data. Since the scope of both articles was the development of an error analysis for a broad class of equations and their discretisation, no effort was spent to specific problems such as \eqref{def:gen-stokes} which involves a constraint as well.

\smallskip
In~\cite{Vo2023}, Vo analysed the Milstein scheme\footnote{The Milstein scheme is a higher-order time-stepping algorithm which was originally introduced for stochastic differential equations; see \emph{e.g.}~\cite{Milshtejn1975}. For these equations, its error decays with rate $1$ (in time).} in combination with mixed finite elements for approximating the Stokes equations driven by sufficiently smooth multiplicative noise. Provided that the data are regular enough (in particular, the noise coefficient needs to be differentiable in the direction of velocity), he established that the velocity and pressure errors converge with rate~$1$ in time and space, doubling the convergence rate in time in comparison to the implicit Euler and Chorin methods. Moreover, he conducted numerical experiments to validate the theoretical results. 

\smallskip
In~\cite{Le2024}, Le and Wichmann studied a slightly modified implicit Euler scheme in combination with a generic spatial discretisation for the generalised Stokes equations\footnote{The generalized Stokes equations model power-law fluids. These are fluids for which the shear stress tensor and strain rate tensor are in a non-linear (power-law) relation.}.  This modification enabled them to reduce the regularity requirements of the target solution while preserving the convergence rate ${\mathcal O}(\sqrt{\tau} + h)$. In addition to the classical error measure, they showed that the velocity error converges in some time-fractional Sobolev space with the same convergence rate, too. The theoretical results are supported by numerical experiments.

\smallskip
In~\cite{2024arXiv241214316D}, Droniou, Le, and Wichmann studied a fully discrete algorithm -- that is based on the Crank--Nicolson method in combination with the Gradient Discretization Method\footnote{The Gradient Discretization Method is a mathematical tool that enables a unified analysis for a broad class of discretisations, such as continuous and discontinuous finite elements, and hybrid higher-order methods; see, \emph{e.g.},~\cite{Droniou2018} for details.} for the temporal and spatial discretisation, respectively -- for the approximation of the generalised Stokes equations forced by transport noise. 

They observed that transport noise and, in particular, the Crank--Nicolson method enable a global-in-time control on the approximate velocity. Motivated by this, they studied two families of measures that are both promising candidates for constructing the {\em invariant measure} (\emph{i.e.}, the stationary state for stochastic equations) numerically. For no-slip boundary conditions, they proved that both families converge to the unique invariant measure. However, for non-trivial boundary conditions, this convergence has remained unresolved. 

Their theoretical investigation is accompanied by a series of computational studies, specifically targeting the convergence of the distribution of the kinetic energy and of the velocity at particular locations. In all cases, they found that these distributions converge even for non-trivial boundary conditions. 

\smallskip
In~\cite{Vo2025}, Vo studied the implicit Euler method in combination with mixed finite elements with a special emphasize on arbitrary high moments of the error and the pathwise error: the author showed that, if the initial condition has finite moments of arbitrary order, then arbitrary moments of the errors of velocity and pressure decay with rate $\mathcal{O}( \tau^{\frac{1}{2}} +  \tau^{-\frac{1}{2}}h)$, which extends to the pathwise error too. Both convergence rates were validated by a series of computational studies.

 \medskip
In most of the cited works above, the error analyses  the interplay of general noise, regularity of the Lagrange multiplier, and space-time discretization based on mixed finite elements were done on a torus with periodic boundary conditions, and related works in the more involved setting of bounded domains with Dirichlet conditions are still rare. The goal in the remainder of this section is to detail the sketched effects that do {\em not} appear in the deterministic setting of the Stokes problem, to get prepared to additional ones in the case of the stochastic Navier--Stokes equations in Section~\ref{sec:Num-NSE}.

\subsection{Efficient time discretizations of the stochastic Stokes equations}\label{sec:EfficientTimeDisc}
We now consider time discretization strategies for the stochastic Stokes equations for the aforementioned three noises. In particular, we present three algorithms for the time discretization of~\eqref{def:gen-stokes} forced by additive and multiplicative noises (the implicit Euler method, Chorin method, and pressure-corrected Chorin method); and one algorithm for transport noise (the Crank--Nicolson method). 

These algorithms are time-stepping algorithms, which means that their definition consists of two parts: a) an initialisation rule, and b) an iteration rule. However, since all algorithms are one-step algorithms, their initialisation is done trivially; \emph{i.e.}, the given initial velocity defines the initial state of the time discretization: ${\bf u}^0 := {\bf u}_0$.

The particular noises, interpreted as generic ones to connect them to the analytic section, are given by:
\begin{itemize}
\item (additive noise)~$\dd \bfZ_{\bfu} = \bfsigma \dd W$;
\item (multiplicative noise)~$\dd \bfZ_{\bfu} = \bfsigma(\bfu) \dd W$; and
\item (transport noise)~$\dd \bfZ_{\bfu} =(\bfsigma \cdot \nabla) \bfu \circ \dd W$,
\end{itemize}
where we have restricted the randomness to act via one real-valued Wiener process~$W$ for simplicity.

\subsubsection{Implicit Euler method} \label{ie1}
The first method we will discuss is the implicit Euler scheme, which is defined as follows: 

\begin{framed}
\begin{algorithm}[implicit Euler] \label{algo:Semi-disc--ImplicitEuler}
Let $n\geq 0$. Given the previous velocity ${\bf u}^{n}$, compute the next velocity and pressure iterates $({\bf u}^{n+1}, p^{n+1})$ from
\begin{subequations}\label{stokes-2}
\begin{eqnarray}\label{stokes-2a}
\bigl({\bf u}^{n+1} - {\bf u}^n\bigr) - \tau \bigl(\Delta {\bf u}^{n+1} - \nabla p^{n+1} - {\bf f}^{n+1}\bigr) &=& \pmb{\sigma}(t_n, {\bf u}^{n})\Delta_n W\qquad \, \mbox{on }  \Omega \times D, \\  \label{stokes-2b}
{\rm div}\, {\bf u}^{n+1} &=& 0 \qquad \qquad\qquad\qquad \mbox{on } \Omega \times D.
\end{eqnarray}
\end{subequations}
\end{algorithm}
\end{framed}

From an analytical viewpoint, the solution consists of {\em two} components $({\bf u}^{n+1}, p^{n+1})$, whose construction can be done similarly as for the non-discretised stochastic Stokes equations described in Section \ref{sec:model-and-theory}. 
A relevant property that distinguishes the scheme is the following stability result (for simplicity, we set $\{{\bf f}^n\}_{n=1}^N \equiv {\bf 0}$):

\begin{lemma}[velocity stability] \label{lem:IE-energyEstimate} For $0\leq n \leq N$, it holds
\begin{equation}\label{disc_ener0}
{\mathbb E} \left[ {\mathscr E}({\bf u}^n)\right] +  {\mathbb E}\left[ \tau \sum_{\ell=0}^n \Vert \nabla {\bf u}^{\ell+1}\Vert^2_{{\mathbb L}^2}\right]\leq C(t_n)\Big(1+{\mathbb E} \left[ {\mathscr E}({\bf u}^0)\right]\Big),
\end{equation}
where the energy functional is again (\emph{cf.}~\eqref{eq:energy-preservation}) given by ${\mathscr E}({\bf v}) = \frac{1}{2} \Vert {\bf v}\Vert^2_{{\mathbb L}^2}$ ({\em i.e.}, the kinetic energy).
\end{lemma}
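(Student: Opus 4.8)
The plan is to test the velocity equation \eqref{stokes-2a} against $2\tau\,{\bf u}^{n+1}$ in $\mathbb{L}^2$, which is the discrete analogue of testing the continuous equation against the solution itself. First I would note that the pressure term drops out: since $\operatorname{div}{\bf u}^{n+1}=0$ (by \eqref{stokes-2b}) and we are on the torus, integration by parts gives $(\nabla p^{n+1},{\bf u}^{n+1})=0$. Similarly the diffusion term yields $-\tau(\Delta{\bf u}^{n+1},{\bf u}^{n+1})=\tau\|\nabla{\bf u}^{n+1}\|_{\mathbb{L}^2}^2$. For the acceleration term I would use the elementary polarisation identity $2(a-b,a)=\|a\|^2-\|b\|^2+\|a-b\|^2$, so that $2({\bf u}^{n+1}-{\bf u}^n,{\bf u}^{n+1})=\|{\bf u}^{n+1}\|_{\mathbb{L}^2}^2-\|{\bf u}^n\|_{\mathbb{L}^2}^2+\|{\bf u}^{n+1}-{\bf u}^n\|_{\mathbb{L}^2}^2$. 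Collecting these gives the pathwise identity
\begin{equation} \nonumber
\|{\bf u}^{n+1}\|_{\mathbb{L}^2}^2 - \|{\bf u}^n\|_{\mathbb{L}^2}^2 + \|{\bf u}^{n+1}-{\bf u}^n\|_{\mathbb{L}^2}^2 + 2\tau\|\nabla{\bf u}^{n+1}\|_{\mathbb{L}^2}^2 = 2\bigl({\bf u}^{n+1}, \pmb{\sigma}(t_n,{\bf u}^n)\,\Delta_n W\bigr).
\end{equation}

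Next I would handle the stochastic term on the right, which is the crux of the argument. I would split ${\bf u}^{n+1} = {\bf u}^n + ({\bf u}^{n+1}-{\bf u}^n)$, so that $2({\bf u}^{n+1},\pmb{\sigma}(t_n,{\bf u}^n)\Delta_n W) = 2({\bf u}^n,\pmb{\sigma}(t_n,{\bf u}^n)\Delta_n W) + 2({\bf u}^{n+1}-{\bf u}^n,\pmb{\sigma}(t_n,{\bf u}^n)\Delta_n W)$. The first piece is a martingale increment: since ${\bf u}^n$ and $\pmb{\sigma}(t_n,{\bf u}^n)$ are $\mathcal{F}_{t_n}$-measurable while $\Delta_n W$ is independent of $\mathcal{F}_{t_n}$ with mean zero, its expectation vanishes. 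The second piece I would absorb using Young's inequality: $2({\bf u}^{n+1}-{\bf u}^n,\pmb{\sigma}(t_n,{\bf u}^n)\Delta_n W) \leq \|{\bf u}^{n+1}-{\bf u}^n\|_{\mathbb{L}^2}^2 + \|\pmb{\sigma}(t_n,{\bf u}^n)\|_{\mathbb{L}^2}^2|\Delta_n W|^2$. The first term here is exactly cancelled by the $\|{\bf u}^{n+1}-{\bf u}^n\|_{\mathbb{L}^2}^2$ term arising from the polarisation identity — this cancellation is the reason the implicit Euler scheme is stable, and it is the point I would expect to be the main obstacle to get cleanly, since it is precisely where the scheme's structure matters. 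For the remaining term I would take expectations, use $\mathbb{E}[|\Delta_n W|^2] = \tau$ together with the independence of $\Delta_n W$ from $\mathcal{F}_{t_n}$, and invoke the linear-growth bound on the noise coefficient, $\|\pmb{\sigma}(t_n,{\bf v})\|_{\mathbb{L}^2}^2 \leq C(1 + \|{\bf v}\|_{\mathbb{L}^2}^2)$, to obtain $\mathbb{E}\bigl[\|\pmb{\sigma}(t_n,{\bf u}^n)\|_{\mathbb{L}^2}^2\bigr]\tau \leq C\tau(1 + \mathbb{E}[\|{\bf u}^n\|_{\mathbb{L}^2}^2])$.

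Putting this together yields the recursion
\begin{equation} \nonumber
\mathbb{E}\bigl[\|{\bf u}^{n+1}\|_{\mathbb{L}^2}^2\bigr] + 2\tau\,\mathbb{E}\bigl[\|\nabla{\bf u}^{n+1}\|_{\mathbb{L}^2}^2\bigr] \leq (1+C\tau)\,\mathbb{E}\bigl[\|{\bf u}^n\|_{\mathbb{L}^2}^2\bigr] + C\tau.
\end{equation}
I would then sum this inequality over $\ell = 0,\dots,n-1$ (the dissipation terms telescope into the sum appearing in \eqref{disc_ener0}, and the $\|{\bf u}^{n+1}\|_{\mathbb{L}^2}^2$ terms telescope on the left), which leaves $\mathbb{E}[\|{\bf u}^n\|_{\mathbb{L}^2}^2] + 2\tau\sum_{\ell=0}^{n-1}\mathbb{E}[\|\nabla{\bf u}^{\ell+1}\|_{\mathbb{L}^2}^2] \leq \mathbb{E}[\|{\bf u}^0\|_{\mathbb{L}^2}^2] + C\tau\sum_{\ell=0}^{n-1}\mathbb{E}[\|{\bf u}^\ell\|_{\mathbb{L}^2}^2] + Cn\tau$. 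Since $n\tau \leq T$, a discrete Gronwall lemma then gives $\mathbb{E}[\|{\bf u}^n\|_{\mathbb{L}^2}^2] \leq C(t_n)(1 + \mathbb{E}[\|{\bf u}^0\|_{\mathbb{L}^2}^2])$ with $C(t_n)$ of the form $e^{Ct_n}$; feeding this back into the summed inequality controls the dissipation sum as well. Dividing by $2$ and recalling ${\mathscr E}({\bf v}) = \tfrac12\|{\bf v}\|_{\mathbb{L}^2}^2$ gives exactly \eqref{disc_ener0}. One technical point I would be careful about is the well-definedness and integrability of the iterates (so that all the expectations above are finite): this follows inductively, since at each step \eqref{stokes-2} is a linear elliptic saddle-point problem with an $\mathcal{F}_{t_n}$-measurable, square-integrable right-hand side, giving an $\mathcal{F}_{t_{n+1}}$-measurable solution in $L^2_\omega\mathbb{V}$.
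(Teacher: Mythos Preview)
Your argument is correct and mirrors the paper's proof essentially line for line: test \eqref{stokes-2a} with ${\bf u}^{n+1}$ (your ``$2\tau$'' is a slip --- the displayed identity corresponds to testing with $2{\bf u}^{n+1}$), split the stochastic term into a mean-zero martingale part and a remainder absorbed by the numerical dissipation $\|{\bf u}^{n+1}-{\bf u}^n\|_{\mathbb L^2}^2$, then apply discrete Gronwall. The paper phrases the growth bound on $\pmb{\sigma}$ via Lipschitz continuity rather than linear growth, but the resulting estimate \eqref{term1} is the same.
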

The verification of Lemma~\ref{lem:IE-energyEstimate} utilizes standard arguments only. To ensure a self-contained presentation and to familiarise ourselves with the novelties due to the driving noise, we provide a proof.  
\begin{proof}
The formal derivation of~\eqref{disc_ener0} rests on considering (\ref{stokes-2}) in variational form for a single $\omega \in \Omega$, and using ${\bf u}^{n+1}(\omega)$ as test function. In order to apply independence  of the increment $\Delta_n W$ of the $\sigma$-algebra ${\mathcal F}_{t_n}$, standard calculations lead to the following identity which holds ${\mathbb P}$-a.s.:
\begin{eqnarray}\nonumber
&&\frac{1}{2} \bigl( \Vert {\bf u}^{n+1}\Vert^2_{{\mathbb L}^2} - \Vert {\bf u}^{n}\Vert^2_{{\mathbb L}^2} + \Vert {\bf u}^{n+1} - {\bf u}^{n}\Vert^2_{{\mathbb L}^2}\bigr)
+ \tau \Vert \nabla {\bf u}^{n+1}\Vert^2_{{\mathbb L}^2} \\ \nonumber
&&\qquad \qquad \qquad = \bigl(\pmb{\sigma}(t_n, {\bf u}^{n})\Delta_n W, {\bf u}^{n+1} - {\bf u}^{n}\bigr) + \bigl(\pmb{\sigma}(t_n, {\bf u}^{n})\Delta_n W,  {\bf u}^{n}\bigr) \\ \label{discr_energy1}
&&\qquad \qquad \qquad \leq \frac{1}{2} \Vert {\bf u}^{n+1}-{\bf u}^n\Vert^2_{{\mathbb L}^2} + \Vert \pmb{\sigma}(t_n, {\bf u}^{n}) \Delta_n W\Vert^2_{{\mathbb L}^2} + \bigl(\pmb{\sigma}(t_n, {\bf u}^{n})\Delta_n W,  {\bf u}^{n}\bigr)\, .
\end{eqnarray}
The reason why we proceed like this on the right-hand side of the equality sign is that stochastic increments $\Delta_n W$ that are obtained on the mesh $\{t_n\}_{n=0}^N$ are of order
${\mathcal O}(\sqrt{\tau})$, so a standard estimation of the term $(\pmb{\sigma}(t_n, {\bf u}^{n})\Delta_n W, {\bf u}^{n+1})$ on the right-hand side by Cauchy-Schwarz inequality would spoil a discrete energy bound if estimated directly.
Now the leading term on the right-hand side in (\ref{discr_energy1}) may be absorbed on the left-hand side. For the last term on the right-hand side, we have that
${\mathbb E}\bigl[ \bigl(\pmb{\sigma}(t_n, {\bf u}^{n})\Delta_n W,  {\bf u}^{n}\bigr)\bigr] = 0$ since the random variables $\Delta_n W$ and $\pmb{\sigma}^\top(t_n, {\bf u}^n){\bf u}^n$ are independent, and increments $\Delta_n W$ are normally distributed, with mean value $0$. For the remaining term on the right-hand side we use independence of $\pmb{\sigma}(t_n, {\bf u}^n)$ and $\Delta_n W$, and we also use ${\mathbb E}[ \abs{ \Delta_n W }^2] = \tau$, as well as that $\pmb{\sigma}$ is Lipschitz-continuous to arrive at the estimate 
\begin{equation}\label{term1}
{\mathbb E}[\Vert \pmb{\sigma}(t_n, {\bf u}^{n}) \Delta_n W\Vert_{{\mathbb L}^2}^2] \leq C \tau \bigl(1+ {\mathbb E}[\Vert {\bf u}^{n}\Vert^2_{{\mathbb L}^2_2}] \bigr).\end{equation} This estimation is now suitable to apply the discrete form of Gronwall's lemma to deduce (\ref{disc_ener0}). 
\end{proof}


By energy-type arguments similar to those that were sketched to settle the {\em discrete stability} property (\ref{disc_ener0}) for (\ref{stokes-2}), it is then possible for the ${\mathbb V}$-valued solution $\{ {\bf u}^n\}_{n=1}^N$ of this semi-discretisation to verify convergence with strong rate $\frac{1}{2}$ for the velocity iterates (see, {\em e.g.}~\cite{Gyngy2007}), {\em i.e.},
\begin{equation}\label{stokes-3}
\max_{1 \leq n \leq N} {\mathbb E}\bigl[ \Vert {\bf u}(t_n) - {\bf u}^n\Vert_{{\mathbb L}^2}^2\bigr] \leq C  \tau.
\end{equation}
Its derivation is again based on working with {\em exactly divergence free} (test) functions, such that after application of the Helmholtz projection the pressure vanishes from (\ref{stokes-2}), and the optimality of this rate follows from well-known limited temporal regularity of the driving Wiener process; see {\em e.g.}~\cite{KS1}. 

To derive a corresponding estimate for the hydrostatic pressure $\{p^{n}\}_{n=1}^N$ in (\ref{stokes-2}) is a more delicate endeavor, which reveals a new difficulty for the construction of numerical schemes for (\ref{stokes-2})  that is not present in the related well-known theory for the {\em deterministic case} ({\em i.e.}, $\pmb{\sigma} \equiv 0$; see {\em e.g.}~\cite{HR1,HR2}): the interplay of the driving noise with the pressure $\{p^{n}\}_{n=1}^N$ -- which mathematically serves its role as the Lagrange multiplier to guarantee~\eqref{stokes-2b}. The following estimate has first been obtained in \cite{FQ1} (see also Theorem 3.3 in \cite{FPV1}) for the time-integrated pressure:
\begin{equation}\label{stokes-4}
\max_{1 \leq n \leq N} {\mathbb E} \bigg[ \bigl\Vert \int_0^{t_{n}} p(s)\, {\rm d}s - \tau \sum_{\ell=1}^n p^\ell \bigr\Vert^2_{{\mathbb L}^2/{\mathbb R}} \bigg] \leq C  \tau\, .
\end{equation}
In a sense, only the `anti-derivative' of the pressure may be well-approximated by the iterates $\{ p^n\}_{n=1}^N$ from (\ref{def:gen-stokes}) -- rather than $\max_{1 \leq n \leq N}{\mathbb E}\big[ \Vert p(t_n) - p^n\Vert^2_{{\mathbb L}^2/{\mathbb R}}\bigr]$ which is used in the deterministic case. As already mentioned, the reason for this weaker error estimate may again be motivated by a discrete stability result for the pressure iterates $\{ p^n\}_{n=1}^N$ from (\ref{def:gen-stokes}) (for simplicity, we assume that $\{{\bf f}^n\}_{n=1}^N = {\bf 0}$):
\begin{lemma}[pressure stability] \label{lem:pressure-stability} There hold
\begin{equation*}
\max_{1 \leq n \leq N} {\mathbb E}\bigl[ \Vert \nabla p^{n}\Vert^2_{{\mathbb L}^2}\bigr] \leq \frac{C}{\tau} \qquad \text{ and } \qquad \max_{1 \leq n \leq N} {\mathbb E}\bigg[ \Vert \sum_{\ell=1}^n \tau \nabla p^{\ell}\Vert^2_{{\mathbb L}^2}\bigg] \leq C.
\end{equation*}
\end{lemma}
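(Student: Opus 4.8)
The plan is to obtain the discrete pressure \emph{explicitly} by projecting the momentum equation \eqref{stokes-2a} onto gradients, and then to read off both bounds directly. First I would apply the complementary Helmholtz--Leray projection $P_{\tt HL}^\perp$ to \eqref{stokes-2a}. Since ${\bf u}^n,{\bf u}^{n+1}\in{\mathbb V}$ are exactly divergence-free (recall ${\bf u}^0={\bf u}_0\in{\mathbb V}$ and \eqref{stokes-2b}), one has $P_{\tt HL}^\perp({\bf u}^{n+1}-{\bf u}^n)=0$; in the periodic setting $P_{\tt HL}$ commutes with $\Delta$ (equivalently $P_{\tt HL}^\perp\Delta=\nabla\,\mathrm{div}$), so $P_{\tt HL}^\perp\Delta{\bf u}^{n+1}=0$; and $P_{\tt HL}^\perp\nabla p^{n+1}=\nabla p^{n+1}$. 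Hence the momentum equation collapses to the discrete analogue of the ``noise'' contribution in \eqref{eq:reconstruct-pressure}:
\begin{equation*}
\tau\,\nabla p^{n+1} = P_{\tt HL}^\perp\bigl[\pmb{\sigma}(t_n,{\bf u}^n)\,\Delta_n W\bigr],\qquad 0\le n\le N-1.
\end{equation*}
This identity already displays the source of the $\tau^{-1}$ blow-up: the discrete gradient pressure is a projection of the noise coefficient times an increment of size ${\mathcal O}(\sqrt\tau)$, divided by $\tau$.

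For the first bound I would take $\Vert\cdot\Vert_{{\mathbb L}^2}^2$ in this identity, use that $P_{\tt HL}^\perp$ is an ${\mathbb L}^2$-orthogonal projection (hence a contraction) to drop it, pull out $\abs{\Delta_n W}^2$, and take expectations: since $\pmb{\sigma}(t_n,{\bf u}^n)$ is ${\mathcal F}_{t_n}$-measurable while $\Delta_n W$ is independent of ${\mathcal F}_{t_n}$ with ${\mathbb E}[\abs{\Delta_n W}^2]=\tau$, combined with the linear growth of $\pmb{\sigma}$ (a consequence of its Lipschitz continuity) and the velocity bound ${\mathbb E}[\Vert{\bf u}^n\Vert_{{\mathbb L}^2}^2]=2\,{\mathbb E}[{\mathscr E}({\bf u}^n)]\le C$ from Lemma~\ref{lem:IE-energyEstimate}, this gives $\tau^2\,{\mathbb E}[\Vert\nabla p^{n+1}\Vert_{{\mathbb L}^2}^2]\le C\tau$ uniformly in $n$, i.e. the first estimate.

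For the second bound I would sum the identity over $\ell=1,\dots,n$ and use linearity of $P_{\tt HL}^\perp$ to get $\sum_{\ell=1}^n\tau\,\nabla p^{\ell}=P_{\tt HL}^\perp\bigl[\sum_{\ell=0}^{n-1}\pmb{\sigma}(t_\ell,{\bf u}^\ell)\Delta_\ell W\bigr]$. The bracketed term is a discrete stochastic integral, i.e. an $\{{\mathcal F}_{t_m}\}$-martingale in ${\mathbb L}^2$ (because $\pmb{\sigma}(t_\ell,{\bf u}^\ell)$ is ${\mathcal F}_{t_\ell}$-measurable and ${\mathbb E}[\Delta_\ell W\mid{\mathcal F}_{t_\ell}]=0$), so the discrete It\^{o} isometry applies and all cross terms vanish. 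Combining the contractivity of $P_{\tt HL}^\perp$, the It\^{o} isometry, the linear growth of $\pmb{\sigma}$, the uniform velocity bound from Lemma~\ref{lem:IE-energyEstimate}, and $n\tau\le T$ yields ${\mathbb E}[\Vert\sum_{\ell=1}^n\tau\,\nabla p^{\ell}\Vert_{{\mathbb L}^2}^2]\le C\sum_{\ell=0}^{n-1}\tau(1+{\mathbb E}[\Vert{\bf u}^\ell\Vert_{{\mathbb L}^2}^2])\le C\,n\tau\le C\,T$, uniformly in $n$.

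The analysis is short; the only point that needs care --- and the conceptual ``main obstacle'' --- is justifying that the acceleration term $P_{\tt HL}^\perp({\bf u}^{n+1}-{\bf u}^n)$ and the diffusion term $P_{\tt HL}^\perp\Delta{\bf u}^{n+1}$ drop out, which relies on the exact divergence-freeness of the iterates and on the periodic boundary conditions (so that $P_{\tt HL}$ commutes with $\Delta$); on a domain with boundary the diffusion term survives and the argument must be modified. It is worth stressing that the $\tau^{-1}$ factor in the first estimate is genuine and not an artefact of the proof: it mirrors exactly the $\mathcal O(\sqrt\tau)$ size of $\Delta_n W$, and it is precisely this loss that forces one to work with the time-integrated pressure, as in \eqref{stokes-4}, rather than with the pressure itself.
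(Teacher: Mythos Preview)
Your argument is correct and follows essentially the same route as the paper: apply $P_{\tt HL}^\perp$ to \eqref{stokes-2a} to obtain the explicit identity $\tau\nabla p^{n+1}=P_{\tt HL}^\perp[\pmb{\sigma}(t_n,{\bf u}^n)\Delta_nW]$, then use independence of $\Delta_nW$ from ${\mathcal F}_{t_n}$ together with the energy bound from Lemma~\ref{lem:IE-energyEstimate} for the first estimate, and sum the identity (invoking the martingale structure/discrete It\^o isometry) for the second. Your justification of why the acceleration and diffusion terms drop out is in fact more explicit than the paper's, which simply appeals to the torus setting and incompressibility.
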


\begin{proof}
Our argumentation proceeds pathwise, hence let $\omega \in \Omega$ be an arbitrary event. First, we apply the complementary Helmholtz--Leray projection~$P_{\tt HL}^\perp$ to~\eqref{stokes-2a}. Since we assumed that $D$ is the torus and $\bff^n = \bf0$, and by using the incompressibility of $\bfu$ we find that 
\begin{equation} \label{druck-0}
\nabla p^{n+1} (\omega) = \frac{1}{\tau} P_\mathtt{HL}^\perp \left[ \pmb{\sigma}(t_n, {\bfu^{n}(\omega)})\Delta_n  W(\omega) \right].
\end{equation}
Consequently, squaring both sides, integrating in space, together with taking expectations (here we use the independence of the Wiener increment to the previous velocity iterate) and maximum yield
\begin{equation}\label{druck-1}
\max_{1 \leq n \leq N} {\mathbb E}\bigl[ \Vert \nabla p^{n}\Vert^2_{{\mathbb L}^2}\bigr] = \frac{1}{\tau} \max_{0 \leq n \leq N}  {\mathbb E}\bigl[ \Vert P_\mathtt{HL}^\perp \pmb{\sigma}(t_n, {\bf u}^{n})\Vert^2_{{\mathbb L}^2}\bigr].
\end{equation}
It remains to use the stability of the projection, and a further invocation of (\ref{term1}) and (\ref{disc_ener0}) to derive
\begin{equation}\label{druck-2}
\max_{1 \leq n \leq N} {\mathbb E}\bigl[ \Vert \nabla p^{n}\Vert^2_{{\mathbb L}^2}\bigr] \leq \frac{C}{\tau} \bigl(1+ \max_{0 \leq n \leq N} {\mathbb E}[\Vert {\bf u}^{n}\Vert^2_{{\mathbb L}^2_2}]\bigr) \leq
\frac{C}{\tau}\, .
\end{equation}
This settles the first inequality of Lemma~\ref{lem:pressure-stability}. The second follows analogously by first summing up~\eqref{druck-0}.
\end{proof}

\begin{figure*}[t!]
    \centering
    \begin{subfigure}[t]{0.5\textwidth}
        \centering
        \includegraphics[width=1.0\textwidth]{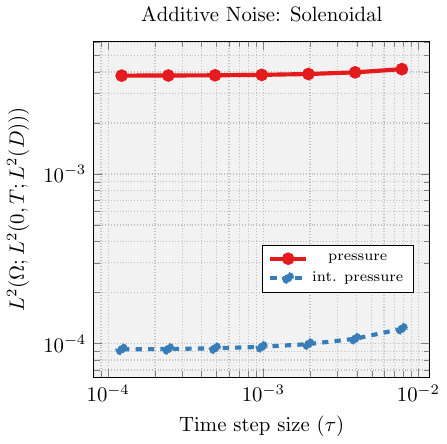}
    \end{subfigure}%
    ~ 
    \begin{subfigure}[t]{0.5\textwidth}
        \centering
        \includegraphics[width=1.0\textwidth]{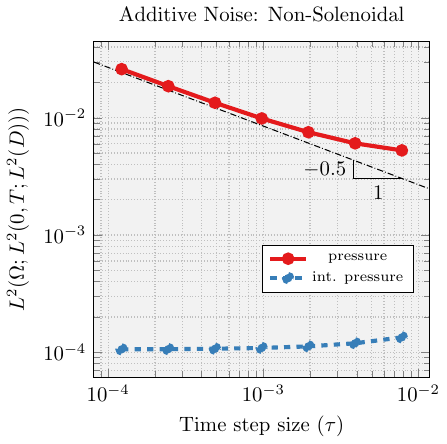}
    \end{subfigure}
    \caption{Norm evolution of approximate pressure and time-integrated (abbreviated by int.) pressure --- generated by the implicit Euler scheme (Algorithm~\ref{algo:Semi-disc--ImplicitEuler}) --- for solenoidal additive noise (left: Example~\ref{ex:additive-solenoidal}) and non-solenoidal additive noise (right: Example~\ref{ex:add-generic}).
    } 
    \label{fig:pressureStability}
\end{figure*}

This first estimation \emph{seems} clearly sub-optimal for numerical analysts used to work on {\em deterministic} fluid flow problems -- but in the stochastic setting it is a result from the interplay of noise and pressure term; in fact, it is identity~\eqref{druck-1} that shows as soon as the noise coefficient has a non-trivial gradient contribution, then the pressure measured without being time-integrated is singular in the time-discretization parameter, and, as such, it would blow up for vanishing time step sizes. In contrast, notice that an estimate which is uniform in $\tau$ is only possible if the {\em noise is solenoidal}; see Figure~\ref{fig:pressureStability} for a related simulation. This difference is not shared if the time-integrated pressure is considered there.

\begin{framed}
The implicit Euler method focuses on the stability of velocity and pressure, and the exact match of the incompressibility constraint for the approximate velocity. As an implicit method, it favours stability over computational complexity. Thus, in general computations require more time but are more stable and accurate in comparison to other methods.  
\end{framed}

Since in the stochastic setting not only time and space but also probability have to be discretized, reducing the computational complexity of the time discretization is a much appreciated asset of any algorithm. Next, we discuss two algorithms that address this issue. As always, the limited regularity of the pressure has to be taken into account in their construction and analysis. 

\subsubsection{Chorin method}\label{os1}
Splitting methods, which trace back to Chorin~\cite{Chorin1968} and Temam~\cite{Tmam1969} for the deterministic (Navier--)Stokes equations, lead to the desired reduction. In~\cite{CHP1}, the Chorin method was adapted to the stochastic Stokes equations. It computes {\em first} an artificial velocity which does not need to be incompressible and hence is easier to compute; and then, in a {\em second} step, it computes a Helmholtz decomposition to correct the artificial velocity by a pressure gradient to recover an incompressible vector field -- the new velocity. More formally, the scheme is defined as follows:
\begin{framed}
\begin{algorithm}[Chorin method] \label{algo:semi-discrete_Chorin}
Let $n\geq 0$. Given the previous velocity and artificial velocity $({\bf u}^{n},\widetilde{\bf u}^{n})$, compute the next velocity, artificial velocity, and pressure $({\bf u}^{n+1},\widetilde{\bf u}^{n+1}, p^{n+1})$ in two steps:
\begin{itemize}
\item[{\bf (A)}] First, compute $\widetilde{\bf u}^{n+1}$ from
\begin{subequations} \label{def:Chorin}
\begin{equation}\label{def:Chorin-first}
\bigl( \widetilde{\bf u}^{n+1} - {\bf u}^n\bigr) - \tau \bigl(\Delta \widetilde{\bf u}^{n+1} - {\bf f}^{n+1}\bigr) = \pmb{\sigma}\bigl(t_n, { \widetilde{\bf u}^{n}}\bigr)\Delta_n W \qquad \mbox{on }  \Omega \times D.
\end{equation}
\item[{\bf (B)}] Then, use $\widetilde{\bf u}^{n+1}$ to compute ${\bf u}^{n+1}$ and $p^{n+1}$ from
\begin{eqnarray}\label{def:Chorin-second} 
{\bf u}^{n+1} - \widetilde{\bf u}^{n+1} + \tau \nabla p^{n+1} &=& 0  \qquad \qquad\qquad\qquad  \mbox{on }  \Omega \times D, \\  \label{def:Chorin-third} 
{\rm div}\, {\bf u}^{n+1} &=& 0 \qquad \qquad\qquad\qquad \mbox{on } \Omega \times D.
\end{eqnarray}
\end{subequations}
\end{itemize} 
\end{algorithm}
\end{framed}
If we apply the divergence operator in (\ref{def:Chorin-second}) and use~\eqref{def:Chorin-third}, we then arrive at the following elliptic problem for the pressure:
\begin{equation}\label{stokes-4c}
-\Delta p^{n+1} = -\frac{1}{\tau} {\rm div}\, \widetilde{\bf u}^{n+1}\,;
\end{equation}
since then we are provided with $\widetilde{\bf u}^{n+1}$ and $p^{n+1}$, we 
obtain the solenoidal vector field ${\bf u}^{n+1}$ by the simple update~\eqref{def:Chorin-second}. Now, related computations are decoupled into successively solving $d+1$ Poisson-type problems per time step: in ${\bf (A)}$ we compute {\em components of} the new iterate
$\widetilde{\bf u}^{n+1}$ independent from each other, and {\bf (B)} amounts to solving (\ref{stokes-4c}). 

This gain in efficiency of the method sacrifices a discrete energy bound
(\ref{disc_ener0}) that is available for (\ref{stokes-2}). As is well-known, this splitting method of Chorin converges with the same rate as the (decoupled) implicit Euler method does in the {\em deterministic setting}; see {\em e.g.}~\cite{Prohl1997, Prohl2009}. The reason for it is the availability of sufficient regularity of the time derivative of the (gradient of the) pressure $\nabla \partial_t p$, which may be concluded from a corresponding property of the velocity field, so that the {\em decoupled} usage of pressure and velocity in the splitting scheme can be controlled. To see this in more detail, we 
shift backwards in time equation (\ref{def:Chorin-second}) and then insert it into (\ref{def:Chorin-first}), which leads to the following system with the term $\nabla \partial_t p$ appearing in discrete form as leading term on the right-hand side of the following equations for $(\widetilde{\bf u}^{n+1}, p^{n+1})$ (we denote by $d_t  {\varphi}^{n+1} = \frac{ {\varphi}^{n+1} -  {\varphi}^{n}}{\tau}$ scaled differences of iterates):
\begin{subequations} \label{eq:chorin-re0}
\begin{eqnarray}\label{eq:chorin-re1}
d_t \widetilde{\bf u}^{n+1}  - \Delta \widetilde{\bf u}^{n+1} + \nabla p^{n+1} &=&
\tau \nabla d_t p^{n+1} +
{\bf f}^{n+1} + \frac{1}{\tau}\pmb{\sigma}\bigl(t_n, { \widetilde{\bf u}^{n}}\bigr)\Delta_n W\,, \\ \label{eq:chorin-re2}
{\rm div}\, \widetilde{\bf u}^{n+1}  - \tau \Delta p^{n+1} &=& 0.
\end{eqnarray}
\end{subequations}
Again, $\pmb{\sigma} = \pmb{0}$ represents the deterministic case, and so the error analysis for Chorin's method in \cite{Prohl1997} rests on its re-interpretation as a semi-implicit pressure stabilization method, where the incompressibility constraint in~\eqref{eq:chorin-re2} is replaced by the \textit{quasi-compressibility} constraint:
\begin{equation}\label{pertu1}
{\rm div} {\bf u} - \varepsilon \Delta p = 0 \qquad \mbox{on } D \qquad \mbox{with} \qquad \varepsilon = {\mathcal O}(\tau)\, .
\end{equation}

\begin{figure*}[t!]
    \centering
    \begin{subfigure}[t]{0.5\textwidth}
        \centering
        \includegraphics[width=1.0\textwidth]{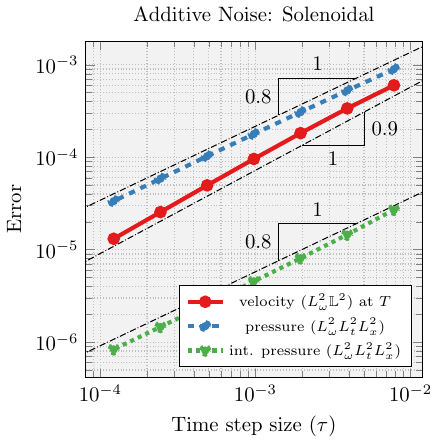}
    \end{subfigure}%
    ~ 
    \begin{subfigure}[t]{0.5\textwidth}
        \centering
        \includegraphics[width=1.0\textwidth]{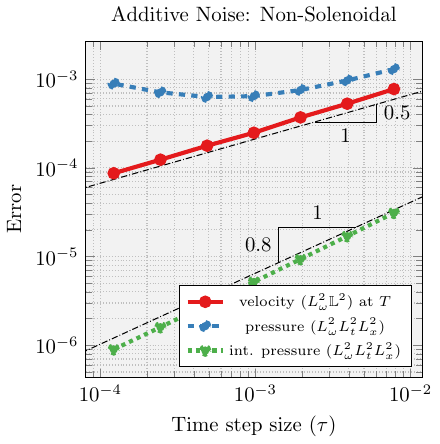}
    \end{subfigure}
    \caption{Evolution of velocity error, pressure error and time-integrated (abbreviated by int.) pressure error --- generated by the Chorin scheme (Algorithm~\ref{algo:semi-discrete_Chorin}) --- for solenoidal additive noise (left: Example~\ref{ex:additive-solenoidal}) and non-solenoidal additive noise (right: Example~\ref{ex:add-generic}).} 
    \label{fig:failure_chorin_inOne}
\end{figure*}

In the stochastic case ({\em i.e.}, $\pmb{\sigma} \neq \pmb{0}$) however, the {\em lack} of corresponding (uniform) stability bounds for the pressure for (\ref{stokes-2})  in
Lemma \ref{lem:pressure-stability}
now prevents an optimal stability {\em resp.}~error estimate for the stochastic Chorin method (see also part two of \cite[Lemma 3.1]{FV2022}) -- where to estimate perturbation effects of the incompressibility constraint, and the splitting-based explicit treatment of pressure iterates are the relevant sources of discretisation errors. In fact, the computational studies in \cite{CHP1} as well as 
\cite[Fig.~1 (left)]{FV2022}  for general ${\mathbb L}^2$-valued noise evidence a reduction of the rate in (\ref{stokes-3}) for the velocity iterates $\{ \widetilde{{\bf u}}^{n}\}_{n=1}^N$ from~\eqref{def:Chorin} to ${\mathcal O}(\tau^{1/4})$; and their theoretical roots
are identified in the convergence analysis in the proof of \cite[Lemma 3.2]{FV2022}. And it is not only that the 
rate of convergence for the velocity iterates deteriorate as indicated: also the time-integrated pressure iterates are affected, deteriorating to an order ${\mathcal O}(\tau^{1/4})$.

While corresponding convergence plots may be found in the cited works for multiplicative noise, we here provide related plots with {\em additive} noise; see Figure~\ref{fig:failure_chorin_inOne}. We observe a doubling of rates, and again a clear dependence of rates on the type of noise,
being either solenoidal or non-solenoidal; we are unaware of any related theoretical study which would explain the improved rates for the integrated pressure. However, the improved rates for the velocity are explained in~\cite{BrPr2} in a related (but different) setting. In addition, we observe that pressure iterates in the presence of non-solenoidal noise fail to converge, as illustrated in Figure~\ref{fig:failure_chorin_inOne} (right).

\begin{framed}
The Chorin method reduces the computational complexity by relaxing the incompressibility constraint for velocity. However, this relaxation has a price: the irregular pressure causes the velocity and pressure approximation errors to decay with a sub-optimal rate. Thus, approximations are fast to compute but lack accuracy. 
\end{framed}

\subsubsection{Pressure-corrected Chorin method} \label{sec:pres-cor-Chorin}
Because of this sub-optimal performance of the original Chorin method in the stochastic setting, a modified version of it was proposed in~\cite{CHP1}. The main idea for this modification is to eliminate the \textit{irregular stochastic pressure} (\emph{i.e.}, the bottleneck for the regularity of the pressure, which in~\eqref{eq:reconstruct-pressure} is the fourth term on the right-hand side) in the splitting scheme. Since solenoidal noises do \textit{not} affect the pressure and hence its regularity, restricting the noise to its solenoidal part by invoking the Helmholtz--Leray projection is better suited for the splitting scheme. However, to approximate the correct equation, the projection needs to be reverted afterwards, which corresponds to a pressure correction. The algorithm reads as follows:

\begin{framed}
\begin{algorithm}[pressure-corrected Chorin method] \label{algo:semi-discrete_ChorinCorrected} Let $n\geq 0$. Given the previous velocity and artificial velocity $({\bf u}^{n},\widetilde{\bf u}^{n})$, compute the next velocity and pressure $({\bf u}^{n+1}, p^{n+1})$, as well as the artificial velocity and pressure $(\widetilde{\bf u}^{n+1}, \tilde{p}^{n+1})$ in three steps:
\begin{itemize}
\item[{\bf (A')}] First, compute $\widetilde{\bf u}^{n+1}$ from
\begin{subequations} \label{def:mod-Chorin}
\begin{equation}\label{def:mod-Chorin-first}
\bigl( \widetilde{\bf u}^{n+1} - {\bf u}^n\bigr) - \tau \bigl(\Delta \widetilde{\bf u}^{n+1} - {\bf f}^{n+1}\bigr) = P_{\tt HL}\left[ \pmb{\sigma}\bigl(t_n, { \widetilde{\bf u}^{n}}\bigr)\Delta_n W \right] \qquad \mbox{on }  \Omega \times D.
\end{equation}
\item[{\bf (B')}] Second, use $\widetilde{\bf u}^{n+1}$ to compute ${\bf u}^{n+1}$ and $\tilde{p}^{n+1}$ from
\begin{eqnarray}\label{def:mod-Chorin-second-a}
{\bf u}^{n+1} - \widetilde{\bf u}^{n+1} + \tau \nabla \tilde{p}^{n+1} &=& 0  \qquad \qquad\qquad\qquad  \mbox{on }  \Omega \times D, \\  \label{def:mod-Chorin-second-b}
{\rm div}\, {\bf u}^{n+1} &=& 0 \qquad \qquad\qquad\qquad \mbox{on } \Omega \times D.
\end{eqnarray}
\item[{\bf (C')}] Finally, use $\tilde{p}^{n+1}$ to compute ${p}^{n+1}$ from
\begin{equation}\label{def:mod-Chorin-third}
 \nabla p^{n+1} = \nabla \tilde{p}^{n+1} + \frac{1}{\tau}  P_{\tt HL}^\perp\left[ \pmb{\sigma}\bigl(t_n, { \widetilde{\bf u}^{n}}\bigr)\Delta_n W \right] \qquad \, \,\mbox{on }  \Omega \times D.
\end{equation}
\end{subequations}
\end{itemize} 
\end{algorithm}
\end{framed}

\begin{figure*}[t!]
    \centering
    \begin{subfigure}[t]{0.5\textwidth}
        \centering
        \includegraphics[width=1.0\textwidth]{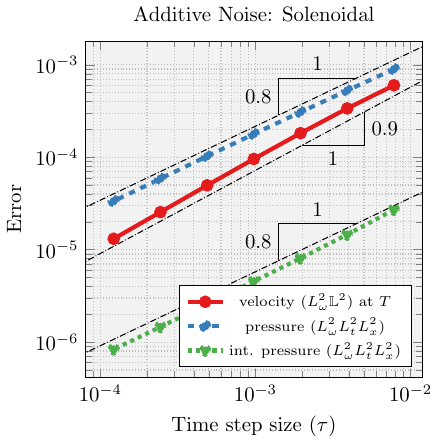}
    \end{subfigure}%
    ~ 
    \begin{subfigure}[t]{0.5\textwidth}
        \centering
        \includegraphics[width=1.0\textwidth]{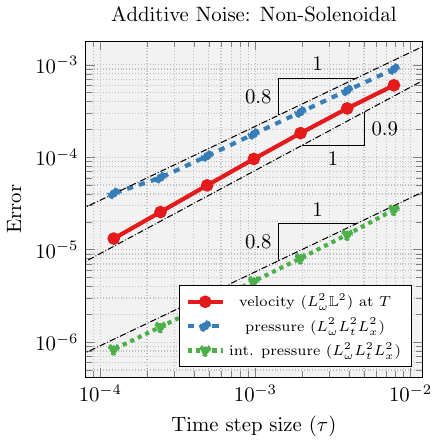}
    \end{subfigure}
    \caption{Evolution of velocity error, pressure error and time-integrated (abbreviated by int.) pressure error  --- generated by the pressure-corrected Chorin scheme (Algorithm~\ref{algo:semi-discrete_ChorinCorrected}) --- for solenoidal additive noise (left: Example~\ref{ex:additive-solenoidal}) and non-solenoidal additive noise (right: Example~\ref{ex:add-generic}).} 
    \label{fig:restoreChorin_inOne}
\end{figure*}

This algorithm relies on the Helmholtz--Leray projection and its complementary projection and hence, they must be computed. To achieve this, it is sufficient to  solve a Poisson problem in a preprocessing step; \emph{i.e.}, introducing $p^{n+1}_{\tt stoch}$ as the solution to
\begin{equation*}
\Delta p^{n+1}_{\tt stoch} = \Div \left[ \pmb{\sigma}\bigl(t_n, { \widetilde{\bf u}^{n}}\bigr)\Delta_n W\right],
\end{equation*}
allows the identifications:
\begin{subequations} \label{eq:pre-modChorin}
\begin{eqnarray} \label{eq:pre-modChorin-a}
P_{\tt HL} \left[ \pmb{\sigma}\bigl(t_n, { \widetilde{\bf u}^{n}}\bigr)\Delta_n W \right] &=&  \pmb{\sigma}\bigl(t_n, { \widetilde{\bf u}^{n}}\bigr)\Delta_n W - \nabla p^{n+1}_{\tt stoch}, \\ \label{eq:pre-modChorin-b}
P_{\tt HL}^\perp \left[ \pmb{\sigma}\bigl(t_n, { \widetilde{\bf u}^{n}}\bigr)\Delta_n W \right] &=& \nabla p^{n+1}_{\tt stoch}.
\end{eqnarray}
\end{subequations}
We then use~\eqref{eq:pre-modChorin-a} and~\eqref{eq:pre-modChorin-b} to compute the right-hand side of~\eqref{def:mod-Chorin-first} and~\eqref{def:mod-Chorin-third}, respectively. We note that this preprocessing step does not increase the complexity of the scheme in a significant way, since it requires to solve another Poisson problem only.

The computational studies for this algorithm in~\cite{CHP1} evidence its optimal convergence; a rigorous mathematical derivation of the convergence was proven in~\cite{FV2022} only later (see also~\cite{FPV1} for related results): it was shown that the approximate velocity and approximate time-integrated pressure of this modified Chorin method converge optimally (\emph{i.e.}, convergence with rate $\frac{1}{2}$) towards the analytic velocity and time-integrated pressure respectively. 

It is atep~{\bf (C')} which gives the algorithm its name, since the pressure is decomposed into the \textit{more regular artificial pressure} and the \textit{irregular stochastic pressure}: $p^{n+1} = \tilde{p}^{n+1} + \frac{1}{\tau} p^{n+1}_{\tt stoch}$. The regularity respective irregularity of each term is attested by their different scaling in $\tau$. Similar decompositions of pressure have been used in different settings for numerical purposes; see, \emph{e.g.},~\cite{BD1,CHP1, FPV1}.

Since only the artificial pressure influences the velocity and the irregular stochastic pressure is dismissed, it enables a uniform stability estimate (see next lemma), which is in sharp contrast to the singular behaviour of the pressure generated by the original Chorin method (see equation~\eqref{druck-1}). 

\begin{lemma}[artificial pressure stability] \label{lem:art-pres-stab}
It holds $\mathbb{P}$-a.s.: $\tau \sum_{n=1}^N \lVert \nabla \tilde{p}^{n}  \rVert_{\mathbb{L}^2}^2 \leq C$.
\end{lemma}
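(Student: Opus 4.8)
The plan is to show that, in the present idealised setting (periodic domain, exact Helmholtz projections, and $\bff\equiv{\bf 0}$ as assumed throughout this subsection), the irrotational part of the artificial velocity $\widetilde{\bf u}^{n+1}$ is produced by a homogeneous, uniformly coercive elliptic problem in each time step, and hence $\nabla\tilde p^{n+1}$ vanishes identically; the asserted estimate is then immediate.

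First I would read off $\nabla\tilde p^{n+1}$ from step~\textbf{(B')}. Rewriting \eqref{def:mod-Chorin-second-a} as $\widetilde{\bf u}^{n+1}={\bf u}^{n+1}+\tau\nabla\tilde p^{n+1}$ and using \eqref{def:mod-Chorin-second-b}, one recognises this as the Helmholtz decomposition of $\widetilde{\bf u}^{n+1}$, i.e. ${\bf u}^{n+1}=P_{\tt HL}\widetilde{\bf u}^{n+1}$ and $\tau\nabla\tilde p^{n+1}=P_{\tt HL}^\perp\widetilde{\bf u}^{n+1}$. By the $\mathbb{L}^2$-orthogonality of the decomposition this gives $\tau\lVert\nabla\tilde p^{n+1}\rVert_{\mathbb{L}^2}=\lVert P_{\tt HL}^\perp\widetilde{\bf u}^{n+1}\rVert_{\mathbb{L}^2}$, so it suffices to control $P_{\tt HL}^\perp\widetilde{\bf u}^{n+1}$.

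Next I would apply $P_{\tt HL}^\perp$ to step~\textbf{(A')}, that is, to \eqref{def:mod-Chorin-first}. Three cancellations occur: (i) ${\bf u}^n$ is divergence-free — it is either ${\bf u}_0$ or an output of \eqref{def:mod-Chorin-second-b} — so $P_{\tt HL}^\perp{\bf u}^n={\bf 0}$; (ii) $P_{\tt HL}^\perp$ annihilates the range of $P_{\tt HL}$, so the already-projected noise term drops out; and (iii) $\bff^{n+1}={\bf 0}$. Moreover, on the torus $P_{\tt HL}^\perp$ commutes with $\Delta$ (both are Fourier multipliers), so $P_{\tt HL}^\perp\Delta\widetilde{\bf u}^{n+1}=\Delta P_{\tt HL}^\perp\widetilde{\bf u}^{n+1}$. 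Collecting these observations,
\[
({\rm Id}-\tau\Delta)\,P_{\tt HL}^\perp\widetilde{\bf u}^{n+1}
= -\tau\,P_{\tt HL}^\perp\bff^{n+1}
= {\bf 0}.
\]
Pairing this in $\mathbb{L}^2$ with $P_{\tt HL}^\perp\widetilde{\bf u}^{n+1}$ (equivalently, noting that ${\rm Id}-\tau\Delta$ has spectrum in $[1,\infty)$ on $\mathbb{L}^2$ of the torus) yields $P_{\tt HL}^\perp\widetilde{\bf u}^{n+1}={\bf 0}$, hence $\nabla\tilde p^{n+1}={\bf 0}$ for every $n$, which proves the claim with $C=0$. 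If one keeps a (sufficiently integrable) force $\bff$, the same computation gives $({\rm Id}-\tau\Delta)P_{\tt HL}^\perp\widetilde{\bf u}^{n+1}=-\tau P_{\tt HL}^\perp\bff^{n+1}$, hence $\lVert\nabla\tilde p^{n+1}\rVert_{\mathbb{L}^2}\le\lVert\bff^{n+1}\rVert_{\mathbb{L}^2}$ and $\tau\sum_{n=1}^N\lVert\nabla\tilde p^n\rVert_{\mathbb{L}^2}^2\lesssim\lVert\bff\rVert_{L^2_tL^2_x}^2=:C$, still uniformly in $\tau$.

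In this semi-discrete periodic setting there is essentially no obstacle; the only point not to overlook is that every cancellation above hinges on the noise entering \eqref{def:mod-Chorin-first} through its solenoidal part alone — which is precisely the purpose of the modification in \textbf{(A')} together with the correction in \textbf{(C')}, and is what sharply distinguishes this estimate from the singular behaviour in \eqref{druck-1}. The genuinely delicate incarnation of this lemma is its fully discrete analogue, where the discrete Helmholtz projection is no longer a Fourier multiplier and the discrete counterpart of $P_{\tt HL}^\perp\widetilde{\bf u}^{n+1}$ need not vanish; there one tests \eqref{def:mod-Chorin-first} with $\widetilde{\bf u}^{n+1}$, uses the orthogonal split $\lVert\widetilde{\bf u}^{n+1}\rVert_{\mathbb{L}^2}^2=\lVert{\bf u}^{n+1}\rVert_{\mathbb{L}^2}^2+\tau^2\lVert\nabla\tilde p^{n+1}\rVert_{\mathbb{L}^2}^2$ inherited from \textbf{(B')}, sums in $n$, and absorbs the stochastic contribution via a velocity bound of the type of Lemma~\ref{lem:IE-energyEstimate} for the iterates $\{\widetilde{\bf u}^n\}_n$ — the extraction of a $\tau$-uniform bound in that more involved computation being the real work.
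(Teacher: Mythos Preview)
Your argument is correct. The paper's proof takes a closely related but slightly different path: it substitutes \eqref{def:mod-Chorin-second-a} into \eqref{def:mod-Chorin-first} to obtain an equation in $({\bf u}^{n+1},\tilde p^{n+1})$ only, then tests that equation with $\nabla\tilde p^{n+1}$; the incompressibility of ${\bf u}^{n}$, ${\bf u}^{n+1}$ and of the projected noise makes every term vanish except $\tau\lVert\nabla\tilde p^{n+1}\rVert_{\mathbb L^2}^2+\tau^2\lVert\nabla^2\tilde p^{n+1}\rVert_{\mathbb L^2}^2=\tau(\bff^{n+1},\nabla\tilde p^{n+1})$, and Cauchy--Schwarz plus summation finishes.

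Your route packages the same cancellations operator-theoretically: instead of substituting and testing, you apply $P_{\tt HL}^\perp$ to \eqref{def:mod-Chorin-first} and invoke the commutation $P_{\tt HL}^\perp\Delta=\Delta P_{\tt HL}^\perp$ on the torus. This is equivalent in content --- testing with $\nabla\tilde p^{n+1}$ is exactly pairing with the gradient part --- but your formulation makes the mechanism more transparent and yields the sharper statement that $\nabla\tilde p^{n+1}\equiv 0$ when $\bff\equiv{\bf 0}$ (which the paper's identity implies but does not state). One minor remark: the standing assumption $\bff\equiv{\bf 0}$ is not imposed for this lemma in the paper --- the proof there keeps $\bff^{n+1}$ throughout --- but you already cover that case at the end, and your bound $\lVert\nabla\tilde p^{n+1}\rVert_{\mathbb L^2}\le\lVert\bff^{n+1}\rVert_{\mathbb L^2}$ matches what the paper obtains. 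Your closing comments on the fully discrete situation are accurate context but go beyond what the lemma asks.
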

\begin{proof}
Using equation~\eqref{def:mod-Chorin-second-a}, we rewrite equation~\eqref{def:mod-Chorin-first} into an equation for $(\bfu^{n+1}, \tilde{p}^{n+1})$ only: 
\begin{equation}\label{def:mod-Chorin-first-alternative}
\bigl( \bfu^{n+1} - {\bf u}^n\bigr) - \tau \bigl(\Delta \bfu^{n+1} - \nabla \tilde{p}^{n+1} - {\bf f}^{n+1}\bigr) - \tau^2 \Delta \nabla \tilde{p}^{n+1} = P_{\tt HL}\left[ \pmb{\sigma}\bigl(t_n, { \widetilde{\bf u}^{n}}\bigr)\Delta_n W \right] \qquad \mbox{on }  \Omega \times D.
\end{equation}
Since we intend to argue pathwisely, let $\omega \in \Omega$ be arbitrary. Next, we multiply~\eqref{def:mod-Chorin-first-alternative} by $\nabla \tilde{p}^{n+1}(\omega)$, integrate in space and integrate by parts. Notice that all terms that depend on $\bfu^{n+1}$ and $\bfu^{n}$ vanish due to~\eqref{def:mod-Chorin-second-b}. Similarly, the projected noise vanishes since it is divergence free. Therefore, we arrive at
\begin{equation*}
\tau \lVert \nabla \tilde{p}^{n+1}  \rVert_{\mathbb{L}^2}^2 + \tau^2 \lVert \nabla^2 \tilde{p}^{n+1}  \rVert_{\mathbb{L}^2}^2  = \tau \left( \bff^{n+1}, \nabla \tilde{p}^{n+1} \right).
\end{equation*}
The claimed assertion now follows from Cauchy-Schwarz inequality, Young's inequality and summing over the time index~$n$.
\end{proof}
The improved pressure stability enables an error analysis based on the semi-implicit formulation of the pressure-corrected Chorin method, which reads as follows for the artificial velocity and pressure~$(\widetilde{\bf u}^{n+1}, \tilde{p}^{n+1})$:
\begin{subequations} \label{eq:reform-mod-Chorin0}
\begin{eqnarray}\label{eq:reform-mod-Chorin-1}
d_t \widetilde{\bf u}^{n+1}  - \Delta \widetilde{\bf u}^{n+1} + \nabla \tilde{p}^{n+1} &=&\tau  \nabla d_t \tilde{p}^{n+1}
 + {\bf f}^{n+1} + \frac{1}{\tau} P_{\tt HL} \bigl[\pmb{\sigma}\bigl(t_n,  \widetilde{\bf u}^{n}\bigr)\Delta_n W\bigr]\,, \\ \label{eq:reform-mod-Chorin-2}
{\rm div}\, \widetilde{\bf u}^{n+1} - \varepsilon \Delta \tilde{p}^{n+1} &=& 0 \, , \qquad \text{ where } \varepsilon = {\mathcal O}(\tau)\, . 
\end{eqnarray}
\end{subequations}
Again, we emphasize that -- different to~\eqref{eq:chorin-re0} -- now the formulation employs the more regular artificial pressure~$\tilde{p}^{n+1}$ only; and, thanks to its improved stability (see Lemma~\ref{lem:art-pres-stab}), the pressure-dependent term on the right-hand side may now be handled efficiently in an error analysis, leading to the optimal convergence of the scheme; see~\cite{FV2022} for further details.

\begin{framed}
The pressure-corrected Chorin method distinguishes between the regular artificial pressure and the irregular stochastic pressure:
the latter is removed from the discrete dynamics via the Helmholtz projection of the noise in a pre-processing step. Consequently, the error sources in the operator-splitting depend on the artificial pressure only, leading to optimal rates of convergence.
\end{framed}

\begin{remark}[The special role of \textbf{additive noise}]\label{rem:3.7}
A special case is the presence of additive noise, \textit{i.e.}, $\bfsigma(\cdot,\bfu) \equiv \bfk$ for some given vector field~$\bfk$ that depends on time and space. In this case, we have seen that (recall Figures~\ref{fig:failure_chorin_inOne} and~\ref{fig:restoreChorin_inOne}) the convergence rate for the velocity and pressure errors increases. This is motivated by the following observation:

Only for additive noise, we can use the transformation $\bfy = \mathbf{u} - \mathbf{\Xi}$, where $\mathbf{\Xi}(t) = \int_0^t \bfk(s) \dd W(s)$, to re-write the stochastic Stokes equations (\emph{cf.} Equations~\eqref{def:gen-stokes}) into the following \textbf{random Stokes equations}:  
\begin{framed}
\begin{subequations} \label{eq:random-stokes}
\begin{eqnarray} \label{eq:random-stokes1}
\partial_t \bfy - \bigl( \mu \Delta {\bfy} - \nabla p \bigr) &=&
\bff + \Delta \mathbf{\Xi}
\qquad  \qquad \mbox{on } \Omega \times (0,T) \times D,\\  \label{eq:random-stokes2}
{\rm div} \, {\bfy} &=& -\Div \mathbf{\Xi} \qquad \qquad \, \mbox{on } \Omega \times (0,T) \times D, \\ 
{\bfy}(0) &=& {\bf u}_0 \qquad \qquad \qquad \,\,\,  \mbox{on } \Omega  \times D.
\end{eqnarray}
\end{subequations}
\end{framed}
In contrast to the stochastic Stokes equations where one seeks $\bfu$ and $p$ as the solution to an integral equation forced by a stochastic integral; the random Stokes equations define the evolution of $\bfy$ and $p$ in terms of differential equations. Hereby, generic additive noise influences not only the evolution equation of~$\bfy$ but also its incompressibility constraint. But for solenoidal noise the constraint is not affected.

Now, not only is it possible to use PDE arguments for the analysis of $\bfy$ (and $p$), but classical numerical algorithms can be utilized for their approximation as well; see \emph{e.g.},~\cite{BrPr2} in which this technique has been used for the stochastic Navier--Stokes equations. For example, the implicit Euler method approximates $\bfy$ at rate $1$. Then, reverting the transformation provides an approximation of the original velocity field~$\bfu$. Provided that the additive noise (\emph{i.e.}, $\mathbf{\Xi}$) can be evaluated exactly, the original variable is approximated with rate $1$ as well -- which is twice as fast as for multiplicative noise.

We may also use this reformulation as the starting point of a numerical analysis to settle the improved convergence rates for the Chorin method and the pressure-corrected Chorin method, as we have observed in Figure~\ref{fig:failure_chorin_inOne} and Figure~\ref{fig:restoreChorin_inOne}, respectively -- whose theoretical justification is still open. 
\end{remark}

\subsubsection{Crank--Nicolson method}
\label{sec:3.3.4}
The Crank--Nicolson method~\cite{Crank1947} was originally introduced to approximate deterministic equations of heat-conduction type. As an equilibrated method, it does not introduce any numerically-induced energy dissipation nor energy concentration, enabling accurate long-term simulations. Moreover, at least for deterministic equations and sufficiently regular solutions, it is second order accurate in time, doubling the convergence rate in comparison to, \emph{e.g.}, the implicit Euler scheme. Due to the equilibration and the increased convergence speed, it is nowadays widely used for the approximation of hyperbolic conservation laws; see, \emph{e.g.},~\cite{Kadalbajoo2006,Pini2001}. Especially, its discrete energy identity (\emph{resp.} energy inequality for nontrivial body forces) makes it a perfect choice for transport-type equations and, as such, for transport noise. While in the stochastic case the low regularity of solutions no longer enables increased convergence rates, the method still balances the energy on the discrete level. Before we discuss this in more detail, we introduce the scheme in the context of transport noise in Stratonovich form: 

\begin{framed}
\begin{algorithm}[Crank--Nicolson] \label{algo:Semi-disc--CN}
Let $n\geq 0$. Given the previous velocity ${\bf u}^{n}$, compute the next velocity and pressure $({\bf u}^{n+1}, p^{n+1})$ from
\begin{subequations}\label{algo:CN}
\begin{eqnarray}\label{algo:CN1}
\bigl({\bf u}^{n+1} - {\bf u}^n\bigr) - \tau \bigl(\Delta {\bf u}^{n+\frac{1}{2}} - \nabla p^{n+1} - {\bf f}^{n+1}\bigr) &=& (\pmb{\sigma} \cdot \nabla) {\bf u}^{n+\frac{1}{2}}\Delta_n W\quad  \mbox{on }  \Omega \times D, \\  \label{algo:CN2}
{\rm div}\, {\bf u}^{n+1} &=& 0 \,\qquad \qquad\qquad\qquad \mbox{on } \Omega \times D,
\end{eqnarray}
\end{subequations}
where $\bfu^{n+\frac{1}{2}} := \frac{\bfu^{n+1} + \bfu^n}{2}$ is the arithmetic mean of previous velocity and next velocity.
\end{algorithm}
\end{framed}
It is the type of stochastic integration that determines the quadrature rule for its discretization: the It\^o integral must be discretized by approximating the integrand in an adapted manner (\emph{i.e.}, its evaluation time must be prior to the time that defines the stochastic increment), whereas the direct approximation of the Stratonovich integral requires the {\em midpoint rule}. At the continuous level, the It\^o and Stratonovich integrals, if evaluated at the solution~$\bfu$, are related by the following identity (for details, see, \emph{e.g.},~\cite{MiRo}):
\begin{equation}\label{strato-1}
\int_0^t (\pmb{\sigma}  \cdot \nabla){\bf u}\, {\rm d}W  = 
\int_0^t (\pmb{\sigma}  \cdot \nabla){\bf u} \circ {\rm d}W  - \frac{1}{2} \int_0^t  (\pmb{\sigma}  \cdot \nabla) (\pmb{\sigma}  \cdot \nabla) {\bf u} \,{\rm d}s ,
\end{equation}
for all $0 \leq t \leq T$ and $\mathbb{P}$-a.s. Discretizing this formula provides an alternative approach for the approximation of the Stratonovich integral: instead of approximating the Stratonovich integral directly, one first approximates the It\^o integral and the It\^o--Stratonovich corrector separately; then, one reconstructs the Stratonovich integral from~\eqref{strato-1}. For example, a good candidate for this approach is the following:
\begin{equation} \label{eq:alternative-Strato}
\int_{t_{n}}^{t_{n+1}}(\bfsigma \cdot \nabla) \bfu \circ \dd \bfW \quad \approx \quad (\pmb{\sigma} \cdot \nabla) {\bf u}^{n}\Delta_n W + \tau \,\frac{1}{2} (\pmb{\sigma}  \cdot \nabla) (\pmb{\sigma}  \cdot \nabla) {\bf u}^{n+1}.
\end{equation}
At least formally, after summing up~\eqref{eq:alternative-Strato} and the right-hand side of~\eqref{algo:CN1}, both approximate the same object asymptotically, since, for sufficiently smooth solutions,
\begin{equation}
\mathbb{E} \left[ \left| \sum_{n=1}^N \left( (\pmb{\sigma} \cdot \nabla) {\bf u}^{n}\Delta_n W + \tau\,\frac{1}{2} (\pmb{\sigma}  \cdot \nabla) (\pmb{\sigma}  \cdot \nabla) {\bf u}^{n+1} \right) - \sum_{n=1}^N(\pmb{\sigma} \cdot \nabla) {\bf u}^{n+\frac{1}{2}}\Delta_n W \right| \right] = \mathcal{O}(\sqrt{\tau}).
\end{equation}

We choose to present the Crank--Nicolson scheme with the direct approximation of the Stratonovich integral, as this allows a \textit{pathwise energy identity}, which we will derive below. However, if one is interested in stiff problems, the additional dissipation induced by the corrector might be advantageous. But this comes at a price: the energy is no longer preserved on each path.

As already emphasized, it is the unconditional stability of the scheme that distinguishes the Crank--Nicolson scheme from others. Next, we present its reason, for which we assume that $\{{\bf f}^n\}_{n=1}^N \equiv {\bf 0}$; otherwise, the next equality has to be replaced by an inequality.   

\begin{lemma}[pathwise energy identity]\label{lem:pathwise-energy-stability} For all $0 \leq n \leq N$, it holds~$\mathbb{P}$-a.s.
\begin{equation}\label{discr_energy2s}
 {\mathscr E}({\bf u}^n)  +   \tau \sum_{\ell=0}^n \Vert \nabla {\bf u}^{\ell+1/2}\Vert^2_{{\mathbb L}^2}  =  {\mathscr E}({\bf u}^0)\, ,
 \end{equation}
 where the energy functional is given by ${\mathscr E}({\bf v}) = \frac{1}{2} \Vert {\bf v}\Vert^2_{{\mathbb L}^2}$ ({\em i.e.}, the kinetic energy).
\end{lemma}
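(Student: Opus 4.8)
The plan is to argue pathwise: fix $\omega\in\Omega$ and test the momentum balance~\eqref{algo:CN1}, read in variational form for this fixed $\omega$, against the midpoint velocity $\bfu^{n+\frac12}$. The choice of this particular test function is the crux of the matter and is exactly what the midpoint (Stratonovich) discretisation is designed for: with it the discrete acceleration becomes an exact energy increment, the diffusion term a non-negative dissipation, and --- in contrast to Lemma~\ref{lem:IE-energyEstimate} --- the stochastic term vanishes \emph{pointwise}, so that neither an expectation nor an independence argument is needed.

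First I would rewrite the discrete acceleration by the polarisation identity
\[
\bigl(\bfu^{n+1}-\bfu^n,\,\bfu^{n+\frac12}\bigr)
=\tfrac12\bigl(\Vert\bfu^{n+1}\Vert_{\mathbb{L}^2}^2-\Vert\bfu^n\Vert_{\mathbb{L}^2}^2\bigr)
=\mathscr{E}(\bfu^{n+1})-\mathscr{E}(\bfu^n),
\]
which holds precisely because $\bfu^{n+\frac12}$ is the arithmetic mean of $\bfu^{n+1}$ and $\bfu^n$. Integration by parts on the torus turns the diffusion term into $-\tau\bigl(\Delta\bfu^{n+\frac12},\bfu^{n+\frac12}\bigr)=\tau\Vert\nabla\bfu^{n+\frac12}\Vert_{\mathbb{L}^2}^2$, and the pressure term drops out, since $\bigl(\nabla p^{n+1},\bfu^{n+\frac12}\bigr)=-\bigl(p^{n+1},{\rm div}\,\bfu^{n+\frac12}\bigr)$ and ${\rm div}\,\bfu^{n+\frac12}=\tfrac12\bigl({\rm div}\,\bfu^{n+1}+{\rm div}\,\bfu^n\bigr)=0$ by~\eqref{algo:CN2} (the solenoidality being propagated inductively from $\bfu^0=\bfu_0$). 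The body force vanishes by assumption.

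The only term that uses the structure of the noise is $\bigl((\pmb{\sigma}\cdot\nabla)\bfu^{n+\frac12},\bfu^{n+\frac12}\bigr)\Delta_nW$, and here I would invoke the skew-symmetry of the transport operator: assuming, as is intrinsic to transport noise, that $\pmb{\sigma}$ is solenoidal, one has for every $\bfv\in\mathbb{H}^1$ on the torus
\[
\bigl((\pmb{\sigma}\cdot\nabla)\bfv,\bfv\bigr)
=\int_D\pmb{\sigma}\cdot\nabla\Bigl(\tfrac{\abs{\bfv}^2}{2}\Bigr)\dx
=-\tfrac12\int_D({\rm div}\,\pmb{\sigma})\,\abs{\bfv}^2\dx=0,
\]
so that the stochastic contribution is identically zero for every $\omega$ and every increment $\Delta_nW$. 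Collecting the four computations above yields the one-step identity $\mathscr{E}(\bfu^{n+1})-\mathscr{E}(\bfu^n)+\tau\Vert\nabla\bfu^{n+\frac12}\Vert_{\mathbb{L}^2}^2=0$, and summing over the time index (telescoping the energy differences) gives~\eqref{discr_energy2s}.

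I expect no genuine obstacle: this is a direct computation. The two points requiring care are exactly the two features responsible for the pathwise --- as opposed to merely in-expectation --- balance, namely that $\pmb{\sigma}$ is taken solenoidal, so that the transport term is genuinely skew-symmetric, and that the test function is the midpoint $\bfu^{n+\frac12}$ rather than $\bfu^{n+1}$: it is the midpoint rule, matching the Stratonovich integral, that simultaneously renders the acceleration an exact energy increment and the stochastic term a pointwise zero.
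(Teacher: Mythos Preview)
Your proposal is correct and follows essentially the same approach as the paper: fix a path, test \eqref{algo:CN1} against the midpoint $\bfu^{n+\frac12}$, use the polarisation identity for the time increment, and exploit ${\rm div}\,\pmb{\sigma}=0$ together with integration by parts to kill the noise term pathwise. Your write-up is in fact slightly more explicit than the paper's (you spell out the pressure cancellation and the telescoping sum), but the argument is identical.
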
 

\begin{proof}
Since we argue on the level of individual paths, we first fix $\omega \in \Omega$. Next, we multiply~\eqref{algo:CN1} by ${\bf u}^{n+1/2}(\omega)$ -- which is different to (\ref{discr_energy1}) where ${\bf u}^{n+1}(\omega)$ was used -- and integrate in space; the binomial formula now leads to
$$ \bigl( {\bf u}^{n+1} - {\bf u}^n, \frac{1}{2}[{\bf u}^{n+1} + {\bf u}^{n}] \bigr) = \frac{1}{2}\bigl( \Vert {\bf u}^{n+1}\Vert^2_{{\mathbb L}^2} - \Vert {\bf u}^n\Vert^2_{{\mathbb L}^2}\bigr).$$
There is no further term $\frac{1}{2} \Vert {\bf u}^{n+1} - {\bf u}^n\Vert^2$ as in (\ref{discr_energy1}) which corresponds to numerical energy dissipation. For multiplicative noise, this term is commonly used to eventually control the discretization of the It\^o integral; see the discussion around~\eqref{discr_energy1}. But this is not necessary for transport noise that we deal with here, since integration by parts and ${\rm div}\, \pmb{\sigma} = 0$ imply that
\begin{equation*}
\left( (\pmb{\sigma} \cdot \nabla) {\bf u}^{n+\frac{1}{2}}\Delta_n W, {\bf u}^{n+\frac{1}{2}}\right) = - \int_{D} \Div \bfsigma \frac{\abs{\bfu^{n+\frac{1}{2}}}^2}{2} \dd x \, \Delta_n W =0. 
\end{equation*}
The claimed pathwise energy identity follows immediately. 
\end{proof}

\begin{figure*}[t!]
    \centering
    \begin{subfigure}[t]{0.5\textwidth}
        \centering
        \includegraphics[width=1.0\textwidth]{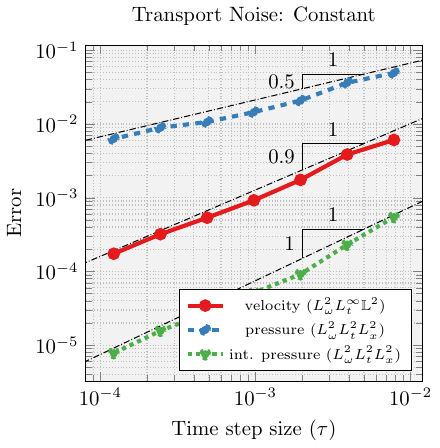}
    \end{subfigure}%
    ~ 
    \begin{subfigure}[t]{0.5\textwidth}
        \centering
        \includegraphics[width=1.0\textwidth]{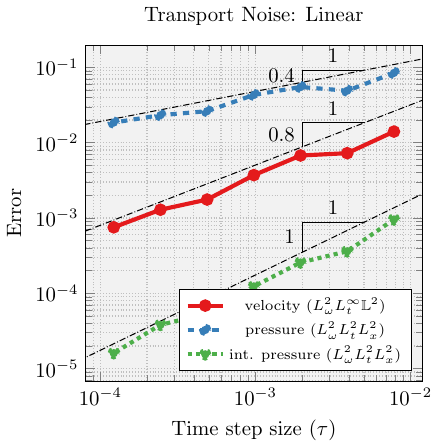}
    \end{subfigure}
    \caption{Evolution of velocity error, pressure error and time-integrated (abbreviated by int.) pressure error  --- generated by a modified Crank--Nicolson scheme (Algorithm~\ref{algo:Semi-disc--CN}), where the Laplacian is evaluated fully implicitly instead of semi-implicitly --- for constant transport noise (left: Example~\ref{ex:transportConstant}) and linear transport noise (right: Example~\ref{ex:transportLinear}).} 
    \label{fig:transportCompare}
\end{figure*}

An advantage of the Crank--Nicolson scheme is that it extends easily to, \emph{e.g.}, generalized Stokes equations, which are used for the modelling of non-Newtonian fluids. For example, in~\cite{2024arXiv241214316D} it is shown that the Crank--Nicolson scheme in combination with a generic spatial discretisation (called Gradient Discretization Method; \emph{cf.}~\cite{Droniou2018}) satisfies an energy inequality similar to Lemma~\ref{lem:pathwise-energy-stability} for the generalised Stokes equations, imposed on a general domain and complemented by non-trivial Dirichlet boundary conditions.

In contrast to additive and multiplicative noises, the numerical investigation of transport noise in fluid equations has only started recently. The only result we are aware of is~\cite{BMPW} in which the authors address a modified Crank--Nicolson scheme (\emph{i.e.}, the Laplacian is evaluated in a fully implicit manner rather than at the midpoint) for the Navier--Stokes equations: they show that the scheme converges optimally in time with rate $\frac{1}{2}$. We remark that the computational studies in Figure~\ref{fig:transportCompare} suggest twice this order in cases where $\bfsigma$ is special, and a theoretical explanation is open at the time. Another interesting, open problem is how an optimally convergent time-splitting scheme (\emph{e.g.}, a version of the pressure-corrected Chorin method) would look like to reduce the computational effort in the coupled Algorithm~\ref{algo:Semi-disc--CN}: the problem here is the Stratonovich interpretation of the noise. Since it depends on the unknown velocity at the new time, the noise cannot be projected in a preprocessing step, which is in conflict to the strategy used for the pressure-corrected Chorin method!

\begin{framed}
The Crank--Nicolson method focuses on the preservation of energy on the discrete level. For transport noise, it is unconditionally stable and, in contrast to other methods, this stability holds pathwisely. Its computational complexity and accuracy (at least for the approximate velocity) are similar to those of the implicit Euler method.  
\end{framed}


\section{Numerics of the stochastic Navier--Stokes equations} \label{sec:Num-NSE}

In the previous section we discussed numerical problems which need to be overcome to arrive at optimally convergent schemes for the stochastic Stokes equations with different driving noise. Additional problems arise when provably convergent schemes for the stochastic Navier-Stokes equations are our aim, and we will address some prominent ones in this section.
We survey the known results for the two-dimensional problem and outline the additional complications if compared to the Stokes system. Finally, we list some open problems.

\subsection{Model and theoretical results}
In this section we consider the stochastic Navier--Stokes equations in two dimensions: 
\begin{framed}
\begin{subequations} \label{def:gen-navierstokes}
\begin{eqnarray} \label{eq:gen-navierstokes1}
{\rm d}{\bf u}+ \bigl( \bfu\cdot\nabla\bigr)\bfu\,{\rm d}t - \bigl( \mu \Delta {\bf u} - \nabla p \bigr){\rm d}t &=&
{\bf f} {\rm d}t  + \dd \bfZ_{\bfu}  
\qquad \mbox{on } \Omega \times (0,T) \times D,\\  \label{eq:gen-navierstokes2}
{\rm div} \, {\bf u} &=& 0 \qquad \qquad \qquad  \mbox{on } \Omega \times (0,T) \times D, \\
{\bf u}(0) &=& {\bf u}_0 \qquad \qquad \quad \,\,\,  \mbox{on } \Omega  \times D,
\end{eqnarray}
\end{subequations}
\end{framed}
which are supplemented by periodic boundary conditions. For the physical meaning of these quantities, we refer to Section~\ref{sec:model-and-theory}.

The obvious difference between the stochastic Stokes equations and the stochastic Navier--Stokes equations is the convective term $\bigl( \bfu\cdot\nabla\bigr)\bfu$, rendering the latter equations non-linear -- and including this term allows a much more accurate description of real fluid flows, especially those with \textit{high} Reynolds numbers (or, equivalently, small viscosities) and turbulent behaviour. 

In contrast to linear SPDEs for which there are many results on its numerical approximation available in the literature, the results for genuinely nonlinear equations are sparse. If the nonlinearity has additional structure (\emph{e.g.}, it is a \emph{semi-linear} equation with Lipschitz-continuous nonlinearity, or it is a nonlinear equation with a suitable \textit{monotone} nonlinearity), then numerical approximations behave qualitatively the same as in the linear case -- although this is non-trivial to prove. A proto-typical example in this regard is the stochastic Allen-Cahn equation, which was studied, \emph{e.g.}, in~\cite{MR3776047}.

However, one easily checks that the mapping $\bfu\mapsto \bigl( \bfu\cdot\nabla\bigr)\bfu$ is neither globally Lipschitz continuous nor has it any useful monotonicity properties. Hence the stochastic Navier--Stokes equations do not fall into the class of SPDEs as just described, indicating that new methods had to be developed:

 A first analysis of such genuinely nonlinear problems has been performed in \cite{Pr}, where instead of considering the mean-square error the \textit{error in probability} was introduced. It is based on the convergence in probability of random variables which is a weaker notion than that of convergence in mean-square; convergence in mean-square corresponds to the convergence of the expectation of a certain (squared) norm, whereas convergence in probability only means that the norm vanishes with arbitrary high probability, see Section \ref{sec:largesamplesets} below for more explanation on this matter.

\if0
\jw{Ich finde, Nachfolgendes sollte in section "open problems":}
While measuring the error in probability is advantages from a theoretical perspective, approximating it numerically is by far non-trivial. Therefore, its use in applications (\emph{e.g.}, to quantitatively attest the accuracy of the algorithm) is nebulous and further investigations are needed. 

A short disclaimer at this point: Because of the difficulty to compute the error in probability, we have computed the mean-square error in all numerical simulations that we have conducted for this article, instead. So far, even though not supported by theory, the simulation suggest a similar error decay rate for the mean-square error as for the error in probability.  \jw{---bis hier}
\fi

Next, we review the literature on the numerical approximation of the stochastic Navier--Stokes equations. As for the linear case, 
we summarise the existing algorithms in the literature and their respective error decay rates in Table~\ref{table:NSE} to facilitate the comparison between different results as well as to enable the fast look-up of individual results.
The focus in this review is on the numerical analysis of the 2D problem, and related strong rates of convergence; for the 3D case we refer to \cite{BCP1,OPW1}.

\subsection{State of the art: Navier--Stokes}  \label{sec:state-NSE}
First convergence rates for a spatio-temporal approximation, generated by the implicit Euler scheme in combination with {\em exactly} divergence-free finite elements, of the 2D stochastic Navier--Stokes equations were proved in \cite{CP}. In particular, it was shown that the error (in probability) for the velocity is $\mathcal O(\tau^{\frac{1}{4}}+h)$.  

\smallskip
Based on a stochastic pressure decomposition on the one hand and more restrictive regularity assumptions on the data on the other hand this has been improved in \cite{BD1}
to 
$\mathcal O(\tau^{\frac{1}{2}}+h)$ -- where now general stable mixed finite elements are admitted. 

\smallskip
The paper \cite{BrDo2} contains similar results for the 3D problem; but since here the existence of a unique solution is only known up to its \textit{blow-up-time}\footnote{The blow-up-time is a random time when certain norms of the solution become unbounded. It is known that the blow-up-time is strictly positive with full probability but its particular value is unknown in general.}, the convergence rates of the error hold locally in time, only. In all three papers generic nonlinear multiplicative noise with appropriate growth assumptions have been considered.

\smallskip
In \cite{BrPr2}, a new temporal and spatio-temporal discretisation for the stochastic Navier--Stokes equations with additive noise was introduced, which is based on the reformulation of the equations as {\em random PDEs}, using the transform ${\bf y} = {\bf u} - \bfZ_{\bfu}$ (see also Remark~\ref{rem:3.7}). The motivation is to derive improved convergence rates of approximates for ${\bf y}$ instead of $\bfu$. Importantly, the new variable has improved time regularity properties; in particular, one can show that it is differentiable in time, while ${\bf u}$ is only H\"older continuous up to the continuity index~$\tfrac{1}{2}$. By using these improved regularity properties it was shown that the error in probability converges with rate up to $1$ in time --- which doubles the convergence order in comparison to the multiplicative case discussed above.

\smallskip
A parallel numerical program has been performed in \cite{BeMi1,BeMi2,BeMi2b}: these works address the question, if convergence results for the  mean-square-error instead of the error in probability may be obtained by exploiting improved stability properties of the solution, such as finiteness of exponential moments. It turns out that this is the case, at least in particular data configurations:


%
The first result in this regard is \cite[Thm.~4.6]{BeMi1}, which 
proves {\em mean-square convergence} for the error with a quantified rate. This rate depends on the ratio of the viscosity and final time, and the noise -- for a fixed noise intensity, and in comparison either small time horizons or large viscosities the rate can be arbitrary close to $\tfrac{1}{4}$. This applies to the time-splitting scheme from \cite{BeBrMi} as well as the implicit Euler scheme used in \cite{CP}.

Another result in this direction is
asymptotic mean-square convergence of order up to $\frac{1}{2}$ in \cite[Thm. 3.3]{BeMi2}; \emph{i.e.}, provided that the strength of the additive noise is of order ${O}(\mu)$ (recall that $\mu$ denotes the viscosity of the fluid),  it is shown that the space-time discretisation is
$\mathcal O(\tau^{\gamma}+h)$, for some viscosity dependent $\gamma=\gamma(\mu)$.
This result is based on exponential moments. Without a restriction on the viscosity
and for multiplicative noise a logarithmic convergence rate is proved in \cite{BeMi2b} which is also based on exponential moment estimates.

A different aspect comes into play when transport noise is considered. It is due to its particular structure that transport noise is energy conservative along every path (see, \emph{e.g.}, Lemma~\ref{lem:pathwise-energy-stability}), which 

\begin{landscape}
\begin{table}
\centering
\begin{tblr}{
  cells = {c},
  row{1} = {Silver},
  vlines,
  hline{1,10} = {-}{0.08em},
  hline{2-9} = {-}{},
}
\textbf{Reference } & \textbf{ Noise} & \textbf{Error} & 
\textbf{Space disc.}                                 & \textbf{Convergence rate (velocity)}                                                                                                                                                               & \textbf{Feature}                                                                                                                             & \textbf{Exp.} \\
\cite{CP}                & multiplicative        & $\mathbb P$       & exact div & $\tau^{\frac{1}{4}} + h$ & regular data & Yes \\  
\cite{BD1}              & multiplicative       & $\mathbb P$        & mixed FEM & $\tau^{\frac{1}{2}} + h$ & regular data & Yes                 
 \\  
\cite{BrPr2}              & additive       & $\mathbb P$          & mixed FEM & $\tau + h$ & regular data & Yes      \\
\cite{BeMi1}              & multiplicative      & $L^2(\Omega)$          & -- &  $\tau^\gamma$, $\gamma=\gamma($viscosity$)$ & large viscosity & No      \\
\cite{BeMi2}              & additive       & $L^2(\Omega)$          & mixed FEM &  $\tau + h^2$ & large viscosity & No      \\
\cite{BeMi2b}              & multiplicative       & $L^2(\Omega)$          & mixed FEM &  $|\log(\tau + h^2)|^{-q}$\,\,\,$\forall q$ & -- & No      \\
\cite{BeMi2b}              & additive      & $L^2(\Omega)$          & mixed FEM &  $\exp(-\sqrt{|\log(\tau + h^2)|})$ & -- & No  \\
           \cite{BMPW}              & transport       & $L^2(\Omega)$          & -- & $ \tau^{\frac{1}{2}}$ & large viscosity, constant fields & Yes      
\end{tblr}
\caption{This table lists the known results on the numerical approximation of the two-dimensional stochastic Navier--Stokes equations described in Section \ref{sec:state-NSE}. For the temporal discretisation an implicit or semi-implicit Euler scheme is used. In some cases only the temporal error is studied.
 We indicate by $\mathbb P$ convergence in probability and by $L^2(\Omega)$ convergence in mean square. In the latter case the velocity error is measured in $L^{2}_\omega L^{\infty}_t \mathbb{L}^2 + L^{2}_\omega L^2_t \mathbb{H}^1$. In the former case convergence in probability for the $L^{\infty}_t \mathbb{L}^2 + L^2_t \mathbb{H}^1$-velocity error is considered.}
\label{table:NSE}
\end{table}
\end{landscape}

%
%
%

continues to hold for spatial derivatives of the solution, if the transport noise is defined in terms of constant vector fields only. Using this pathwise control on the solution, it was derived in~\cite{BMPW} that the mean-square error of a modified Crank--Nicolson time discretization converges with the optimal convergence rate~$\tfrac{1}{2}$ in mean-square sense. However, the result required again a restriction between the intensity of the noise and the viscosity.

\smallskip
The results mentioned so far do not treat the convergence rate for the pressure function explicitly -- but typically, the time-integrated pressure error decays with the same rates as the velocity error: for example, in~\cite{Q} it was shown for an implicit Euler scheme that the error in probability of the time-integrated pressure is $\mathcal O(\tau^{1/2}+h)$. As in the linear case, convergence of the original (non-time-integrated) pressure error can only be expected if the noise is solenoidal such that the temporal regularity of the pressure is unaffected by the noise.

\begin{framed} 
So far, for generic multiplicative noise and additive noise the respective optimal convergence rates $\mathcal O( \sqrt{\tau}+h)$ and $\mathcal O( \tau+h)$ were obtained for the \emph{error in probability} only. While it is proven that the \emph{mean-square error} converges at least logarithmically, faster convergence (up to the optimal ones) is typically observed in numerical simulations. In the case of transport noise, the optimal rates {\em in mean-square sense} are justified by theory. 
\end{framed}

\subsection{The problem in the error analysis and how it is overcome}
\label{sec:largesamplesets}
As alluded to above, we generally only expect convergence for the error in probability for discretizations of the stochastic Navier--Stokes equations. The main issue is that the solution of \eqref{def:gen-navierstokes} inherits some of the Gaussian character of the driving Wiener process: arbitrary high absolute values/norms of the solution are possible with small probability. This is a feature which applies to many stochastic evolution problems even if they are linear or finite dimensional. In the deterministic case, for a spatially smooth solution one can control certain norms of the velocity (or even its gradient) and thus one can treat the nonlinear term similarly to that of a Lipschitz-continuous perturbation. This is not possible any more in the stochastic case, given the unboundedness of solutions as just explained. 
 
In the following, we detail the problem in the error analysis for an implicit Euler discretization that originates from the quadratic nonlinearity and results in the weaker concept of \emph{convergence in probability} for the Navier-Stokes equations
rather than mean square convergence for the Stokes equations. 

First, we generalise the implicit Euler algorithm (Algorithm~\ref{algo:Semi-disc--ImplicitEuler}) to the stochastic Navier-Stokes equations by discretising the convective term in a semi-implicit way, leading to the following algorithm:

\begin{framed}
\begin{algorithm} (implicit Euler -- S-NSE) \label{algo:IE-NSE}
Let $n\geq 0$. Given the previous velocity ${\bf u}^n$, compute the next velocity and pressure $({\bf u}^{n+1}, p^{n+1})$ from
\begin{subequations}
\begin{eqnarray}\label{oc1}
 {\bf u}^{n+1}-{\bf u}^n - \tau \bigl(\Delta {\bf u}^{n+1} -\nabla p^{n+1} - {\bf f}^{n+1}\bigr) \hspace*{-0.25em}&=& \hspace*{-0.25em}-\tau({\bf u}^n \cdot \nabla) {\bf u}^{n+1} +  {\bfsigma}(t_n, {\bf u}^n) \Delta_n W, \\
 \label{oc2}
{\rm div}\, {\bf u}^{n+1}\hspace*{-0.25em} &=& \hspace*{-0.25em}0 .
\end{eqnarray}
\end{subequations}
\end{algorithm}
\end{framed}

Next, by comparing two solutions $\{ {\bf u}^n_i;\, 0 \leq n \leq N\}_{i=1,2}$ with differing initial data ${\bf u}^0_i$, we illustrate the problem in the error analysis due to the quadratic nonlinearity. We denote the velocity and pressure differences by ${\bf e}_{\bf u}^{n+1} := {\bf u}^{n+1}_1 - {\bf u}^{n+1}_2$ and $e_{p}^{n+1} := p^{n+1}_1 - p^{n+1}_2$, respectively. Notice that the velocity difference is solenoidal. Moreover, the evolution equation of the velocity difference is given by: 
 \begin{eqnarray*}
{\bf e}_{\bf u}^{n+1} - {\bf e}_{\bf u}^{n}\hspace*{-0.6em} &-& \hspace*{-0.6em}\tau \Delta {\bf e}_{\bf u}^{n+1}  + \tau \nabla e_{p}^{n+1} \\
&=& \hspace*{-0.6em}- \tau  ({\bf e}^n_{\bf u} \cdot \nabla) {\bf u}^{n+1}_1 - \tau ({\bf u}^n_2 \cdot \nabla){\bf e}_{\bf u}^{n+1}  +  \bigl({\bfsigma}(t_n, {\bf u}^n_1) - {\bfsigma}(t_n, {\bf u}^n_2)\bigr) \Delta_n W.
\end{eqnarray*}
We test this identity with ${\bf e}_{\bf u}^{n+1}$ and use standard arguments for handling the left-hand side; in particular, the fourth term vanishes for periodic boundary conditions. For the right-hand side of the equality we proceed as follows: the Lipschitz nonlinearity ${\bfsigma}(t, \cdot)$ is dealt with as in~\eqref{discr_energy1}; the second term vanishes due to integration by parts and the incompressibility of~$\bfu_2^n$; the `trouble-maker' is the first term on the right-hand side, which needs be to estimated: 

By applying H\"older's inequality, interpolating ${\mathbb L}^4$ between ${\mathbb L}^2$ and ${\mathbb W}^{1,2}$ (here, we use that $d=2$) and using a weighted Young's inequality (for arbitrary~$\delta>0$) yield
\begin{eqnarray*}
\big| \left( ({\bf e}^n_{\bf u} \cdot \nabla) {\bf u}^{n+1}_1, {\bf e}^{n+1}_{\bf u} \right) \big| &\leq &\Vert {\bf e}^n_{\bf u}\Vert_{{\mathbb L}^4} \Vert 
\nabla {\bf u}^{n+1}_1\Vert_{{\mathbb L}^2} \Vert {\bf e}^{n+1}_{\bf u}\Vert_{{\mathbb L}^4} \\
&\leq &  \Vert {\bf e}^n_{\bf u}\Vert_{{\mathbb L}^2}^{\frac{1}{2}}  \Vert \nabla {\bf e}^n_{\bf u}\Vert_{{\mathbb L}^2}^{\frac{1}{2}} \Vert 
\nabla {\bf u}^{n+1}_1\Vert_{{\mathbb L}^2} \Vert {\bf e}^{n+1}_{\bf u}\Vert_{{\mathbb L}^2}^{\frac{1}{2}} \Vert \nabla {\bf e}^{n+1}_{\bf u}\Vert_{{\mathbb L}^2}^{\frac{1}{2}}\\
&\leq & c_\delta  \Vert 
\nabla {\bf u}^{n+1}_1\Vert_{{\mathbb L}^2}^2 \left( \Vert {\bf e}^n_{\bf u}\Vert_{{\mathbb L}^2}^2 + \Vert {\bf e}^{n+1}_{\bf u}\Vert_{{\mathbb L}^2}^2 \right) + \delta \left( \Vert \nabla {\bf e}^n_{\bf u}\Vert_{{\mathbb L}^2}^{2} +   \Vert \nabla {\bf e}^{n+1}_{\bf u}\Vert_{{\mathbb L}^2}^{2} \right).
\end{eqnarray*}
Combining this inequality with the previous steps, we arrive at the following inequality:
 \begin{eqnarray}\nonumber
&&\frac{1}{2} \bigl( \Vert {\bf e}^{n+1}_{\bf u}\Vert^2_{{\mathbb L}^2} - \Vert {\bf e}^{n}_{\bf u}\Vert^2_{{\mathbb L}^2} \bigr) -\delta \tau \Vert \nabla {\bf e}^{n}_{\bf u}\Vert^2_{{\mathbb L}^2} + (1-\delta) \tau \Vert \nabla {\bf e}^{n+1}_{\bf u}\Vert^2_{{\mathbb L}^2} \\ \label{oc3}
&&\qquad\qquad\qquad\qquad \leq \tau c_\delta  \Vert 
\nabla {\bf u}^{n+1}_1\Vert_{{\mathbb L}^2}^2 \left( \Vert {\bf e}^n_{\bf u}\Vert_{{\mathbb L}^2}^2 + \Vert {\bf e}^{n+1}_{\bf u}\Vert_{{\mathbb L}^2}^2 \right) +  \Vert {\bf e}^n_{{\bfsigma}}  \Delta_n W\Vert^2_{{\mathbb L}^2} + \bigl( {\bf e}^n_{{\bfsigma}} \Delta_n W, {\bf e}^n_{\bf u}\bigr)\,,\nonumber
\end{eqnarray}
where ${\bf e}^n_{{\bfsigma}} := {\bfsigma}(t_n, {\bf u}^n_1) - {\bfsigma}(t_n, {\bf u}^n_2)$. 

To obtain a mean-square estimate, the next canonical step is to apply expectations: for the second term on the right-hand side, the It\^o isometry and the Lipschitz-continuity of~$\bfsigma$ help to extract the time-step size from the Wiener increment and to estimate~$\bfe_{\bfsigma}^n$ in terms of $\bfe_{\bfu}^n$, respectively; due to the independence of the Wiener increment, the last term vanishes. It remains to estimate the `trouble-maker': this term is critical, since in general 
\begin{equation} \label{eq:TheDream}
{\mathbb E}\bigl[\Vert \nabla {\bf u}^{n+1}_1\Vert^2_{{\mathbb L}^2} \bigl(
\Vert {\bf e}^n\Vert^2_{{\mathbb L}^2} + \Vert {\bf e}^{n+1}\Vert^2_{{\mathbb L}^2}\bigr)\bigr] \neq {\mathbb E}\bigl[ \Vert \nabla {\bf u}^{n+1}_1\Vert^2_{{\mathbb L}^2}\bigr] \cdot {\mathbb E}\bigl[ \Vert {\bf e}^n\Vert^2_{{\mathbb L}^2} + \Vert {\bf e}^{n+1}\Vert^2_{{\mathbb L}^2}\bigr)\bigr].
\end{equation}

For the moment let us suppose that~\eqref{eq:TheDream} {\em would} hold (\emph{e.g.}, this would be true if the random variables $\Vert \nabla {\bf u}^{n+1}_1\Vert^2_{{\mathbb L}^2}$ and $ 
\Vert {\bf e}^n\Vert^2_{{\mathbb L}^2} + \Vert {\bf e}^{n+1}\Vert^2_{{\mathbb L}^2}$ would be independent). Then, we could complete the strong error analysis with the help of the implicit version of the discrete Gronwall lemma. However, we want to stress that there is no particular reason why the starting assumption (\emph{i.e.},~\eqref{eq:TheDream}) should be satisfied. 

Thus, instead of working with the right-hand side of~\eqref{eq:TheDream}, we must proceed with the left-hand side, requiring us to control the gradient. To do this, three methods have been applied so far, which we describe in the following three subsections. They all rely on the same idea to exclude certain small sample sets where the critical term $\Vert \nabla {\bf u}^{n+1}_1\Vert^2_{{\mathbb L}^2}$ is unbounded\footnote{Since we could have swapped the roles of $\bfu_1$ and $\bfu_2$, the critical term is actually: $\min\{\Vert \nabla {\bf u}^{n+1}_1\Vert^2_{{\mathbb L}^2},\Vert \nabla {\bf u}^{n+1}_2\Vert^2_{{\mathbb L}^2}\}$.}. They all lead to a proof of convergence in probability but differ in their technical implementations. All of them are just a tool for the theoretical error analysis and do not effect the implementation.

\subsubsection{The truncated problem}
The first idea is to truncate the nonlinearity, rendering it Lipschitz continuous with a large Lipschitz constant, say $R\gg1$. Then, one approximates the truncated problem instead of the original one, an approach which originates from
\cite{Pr}. After tracing explicitly the dependence of the error analysis with respect to the truncation parameter, one balances the blow-up of solutions and the error decay by coupling the truncation parameter to the discretization parameters (\emph{e.g.}, $R = \log(\sqrt{\tau} + h)$). By noticing that in the event that the solution stays below the truncation threshold, this solution also solves the non-truncated problem, it is possible to transfer the error decay rates to the original problem but only for this restricted event.
Finally, by passing from the mean square error to the error in probability, one can remove this restriction to obtain the desired result.

\subsubsection{Localisation of the sample space}
The error analysis from \cite{CP} (see also \cite{BD1})
is based on 
estimates in $L^2(\Omega)$, which are localised with respect to the sample set. The size of the neglected sets shrinks asymptotically with respect to the discretisation parameters and is consequently not seen
when measuring the error in probability. On a technical level one multiplies the equation for the error in each time step with the indicator function of the event that the solution from the previous step stays (in a certain norm) below some threshold.\footnote{It would be desirable to control it up to the current step -- rather than the previous one -- but this creates problems with measurability related to stochastic integrals. }
The final estimate relies then on an iterative argument.
The exceptional set of the sample space is not reached with high probability, leading to convergence rates for the error in probability.

 \subsubsection{Discrete stopping times}\label{sec:4.3.3}
A new method based on discrete stopping times has been introduced in \cite{BrPr1}. We consider it superior
to the one by localisation of the sample space described above since it allows us to control all quantities even in the current time step and, at the same time, preserves the properties of the stochastic integrals.
As a result one obtains `global-in-$\Omega$' but `local-in-time' (\emph{i.e.}, up to the discrete stopping time only) convergence rates for the error. Since the discrete stopping times are constructed such that they converge (for vanishing discretization parameters) to $T$, where $T$ can be any given end-time, the convergence rates for the error in probability follow.

\begin{framed}
In general, the solution to stochastic Navier--Stokes equations is unbounded with positive probability, hindering the control of the convective term and, hence, prohibiting a classical error analysis of numerical approximations. To recover convergence rates, one has to remove the events where the solution reaches large values; \emph{e.g.}, this is achieved by using truncation, localisation or stopping times. Since these events do not occur asymptotically, the convergence rates transfer to the {\em error in probability}. 
\end{framed}

\subsection{Open questions}

\subsubsection{Weak error analysis}
For stochastic evolution problems it is common to consider, as an alternative to the pathwise error discussed so far, the \emph{weak error} between the continuous solution and its numerical approximation. The latter describes -- roughly speaking --- how well the probability distribution of the solution can be approximated. For many questions of interest this is sufficient. One expects that the convergence regarding the temporal error is twice the rate from the strong/pathwise error.

Let us first describe this for a finite dimensional problem. We approximate the stochastic differential equation
\begin{equation*}
\dd X_t=\mu(X_t)\dt+\sigma(X_t)\, \dd W,\quad X_0=\eta,
\end{equation*}
via an implicit Euler scheme, obtaining iterates $\{X_n\}_{n=1}^N$ to approximate $X(t_n)$. In the weak error analysis one analyses the quantity
\begin{equation*}
|\mathbb E[\varphi(X(T))-\varphi(X_N)]|,
\end{equation*}
where $\varphi:\R\rightarrow\R$ is smooth and bounded (approximating powers by truncations one can then control the error for all moments of $X_t$). Typically, one obtains
 \begin{equation}\label{eq:weak}
|\mathbb E[\varphi(X(T))-\varphi(X_N)]|\leq C \tau^{2\alpha},
\end{equation}
for any $\alpha<1/2$, see \emph{e.g.} \cite{KlPl}.
On the other hand it holds only
\begin{equation}\label{eq:strong}
\big(\mathbb E[|X(T))-X_N|^2]\big)^{\frac{1}{2}} \leq C \tau^\alpha
\end{equation}
for the strong error.\footnote{Note that \eqref{eq:strong} trivially implies a weak error of order $\tau^\alpha$.}
The proof of \eqref{eq:weak} heavily depends on the Kolmogorov equation, which describes the evolution of the probability density of $X_t$. In the easiest situation, that is if $\mu=0$ and $\sigma=1$, the solution is given by the driving Wiener process (\emph{i.e.}, $X_t=W_t$) and therefore its probability density is known explicitly; \emph{i.e.},
\begin{equation*}
\mathscr U(t,x)=\frac{1}{\sqrt{2\pi}t}\exp(-\tfrac{x^2}{2t}).
\end{equation*}  
Clearly, this is the fundamental solution to the heat equation. In the general case the Kolmogorov equation is the following parabolic PDE that defines $\mathscr U$:
\begin{equation}\label{eq:kol}
\partial_t\mathscr U(t,x)=\tfrac{\sigma^2(x)}{2}\partial_x^2\mathscr U(t,x)+\mu(x)\partial_x \mathscr U(t,x),
\end{equation}
where the coefficients depend on $\mu$ and $\sigma$. Standard results concerning the regularity of solutions to this parabolic equation apply. They are crucial for the proof of \eqref{eq:weak}.

\begin{figure*}[t!]
    \centering
    \begin{subfigure}[t]{0.5\textwidth}
        \centering
        \includegraphics[width=1.0\textwidth]{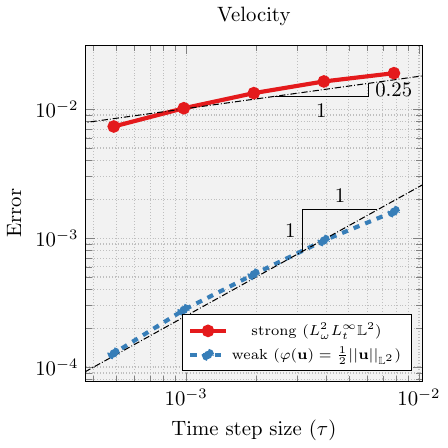}
    \end{subfigure}%
    ~ 
    \begin{subfigure}[t]{0.5\textwidth}
        \centering
        \includegraphics[width=1.0\textwidth]{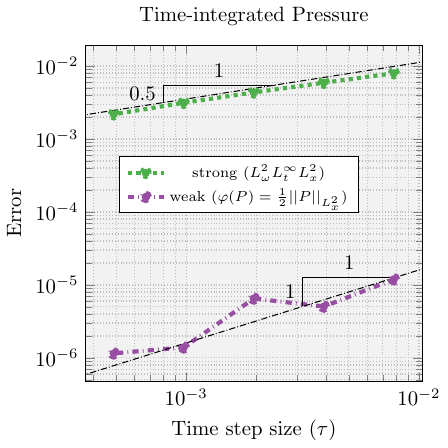}
    \end{subfigure}
    \caption{Evolution of the strong and weak errors for velocity (left) and time-integrated pressure (right) --- generated by a modified version of the implicit Euler scheme (Algorithm~\ref{algo:IE-NSE}) where the convective term is symmetrised --- for non-linear multiplicative noise (Example~\ref{ex:nonlinearMultiplicative}).
    } 
    \label{fig:weakError}
\end{figure*}

First results for SPDEs can be found in \cite{DB1,DB2}, where linear and semilinear problems with Lipschitz-continuous nonlinearities are considered (see also \cite{DB3} for more recent results).  In this case one can derive an infinite-dimensional version of \eqref{eq:kol} (where usually the `spatial' variable belongs to $\mathbb L^2$) and obtain corresponding estimates, \emph{cf.} \cite{Cerrai}. 
Eventually, a version of \eqref{eq:weak} can be derived, where the function $\varphi$ is now defined on the infinite-dimensional path space (with values in $\mathbb R$).
Note that the main difficulty is the error arising from the temporal discretisation. One can split the error between exact solution and space-time discretisation into the error between exact solution and temporal discretisation and that between temporal discretisation and spatio-temporal discretisation. For the latter we expect linear convergence with respect to the spatial discretisation parameter in the strong sense (as described in Section~\ref{sec:state-NSE}) and thus the same for the weak error. Hence the focus is typically only on the weak error analysis for the temporal discretisation.

SPDEs with non-Lipschitz nonlinearities have been considered in \cite{BrPr24b} and \cite{Bre} where a corresponding weak error analysis for the stochastic Allan-Cahn equation has been performed. For the latter the nonlinearity is of lower order and monotone. Being equipped with this additional property of the nonlinearity, it is then possible to adapt the program in~\cite{DB1,DB2,DB3}, and to prove the desired estimates for the Kolmogorov equation, resulting in~\eqref{eq:weak} for the Allen-Cahn equation\footnote{$X(T)$ and $X_N$ have to be replaced by the solution to the Allen-Cahn equation and its implicit Euler discretization in time, respectively}.

A weak error analysis for the stochastic Navier--Stokes equations is currently completely open. One may wonder if one can work with the idea of ``large sample sets'' as discussed before in Section~\ref{sec:largesamplesets} in connection with the strong error analysis. We believe that it is most convenient to employ a cut-off for the nonlinearity controlling a certain norm by the parameter $R$ (since indicator functions and stopping times cause issues with progressive measurability).
As a matter of fact, we believe that it is possible to obtain estimates for the Kolmogorov equation for the truncated problem (in terms of $R$) and to prove
\begin{equation}\label{eq:weakR}
|\mathbb E[\varphi(\bfu^R(T))-\varphi(\bfu^R_N)]|\leq\, c(R)\tau^{2\alpha}
\end{equation}
for all $\alpha<\frac{1}{2}$ via a modification of the approach from \cite{BrPr24b}. This corresponds to the estimate for the strong error and for the latter one can eventually obtain the error in probability by Markov's inequality. Unfortunately, we are not aware of a counterpart of that idea and thus we do not know how to extract any useful information from  \eqref{eq:weakR}.

\begin{framed}
For the stochastic Navier--Stokes equations, a theoretical foundation of the weak error analysis for both the velocity and time-integrated pressure is still missing. At least numerically,
for both errors a rate of order~$1$ (in time) is observed (\emph{cf.} Figure~\ref{fig:weakError}), but this is not supported by any theory yet.  
\end{framed}

\subsubsection{Transport noise}
The results from~\cite{BMPW} leave several questions open for the stochastic Navier--Stokes equations with transport noise.

Firstly, transport noise is always energy conservative (if the driving vector fields are solenoidal) -- a property which can be preserved on the time-discrete level. If the driving vector fields are constants the cancellation of the noise even applies after differentiating the momentum equation in space. Hence estimates in higher order Sobolev space can be obtained. This applies to the exact as well as the time-discrete solution. However, it is unclear if one can obtain similar results for non-constant vector fields. The same problem appears if one considers the equations in a bounded domain supplemented with no-slip boundary conditions instead of the space-periodic problem: the cancellation property of the noise for the differentiated system is lost and it is questionable if the same estimates can be proved.

Secondly, a central assumption in their error analysis is that the viscosity dominates the strength of the noise in a certain way. This is crucial to close the estimate in the theoretical error analysis. Interestingly, our computational studies suggest that this assumption might not be needed. Hence it would be very interesting to understand if temporal error estimates can be obtained without this assumption.

Finally, as alluded to above the temporal error analysis depends on higher order energy estimates for the continuous as well as the time-discrete solution. A corresponding analysis of a spatial (or spatio-temporal) discretisation requires analogous estimates for the discrete solution: in particular, it would involve uniform bounds for fully discrete
iterates of the velocity in high norms which may be obtained with the help of the discrete Stokes
operator. To our knowledge, such a program has not been carried through in the literature, and thus
whether optimal rates for such a space-time discretization may be reached remains open.

\begin{framed}
In summary, it is open to consider the following data settings in the temporal error analysis for the stochastic Navier--Stokes equations with transport noise: non-constant driving vector fields, bounded domains with no-slip boundary conditions, and strong noise which is not dominated by the viscosity. Moreover, an error analysis for a fully discrete algorithm has not been derived yet.
\end{framed}

\subsubsection{Discretisations via efficient time splitting}

It is due to the nonlinear term that solving the stochastic Navier--Stokes equations numerically is far more costly than solving the (linear) stochastic Stokes equations. Typically, a Newton method is used
on this behalf, and therefore the obvious question arises, whether efficient splitting methods, such as generalisations of the
Chorin method (\emph{cf.} Section~\ref{os1}) and the pressure-corrected Chorin method (\emph{cf.} Section~\ref{sec:pres-cor-Chorin}), are also reliable for the Navier--Stokes equations. In case they would be reliable (which is not supported by any theory at the moment), they could be used to accelerate the computation.

The main obstacle that is hindering a successful convergence analysis at the moment is to properly balance the demand for regularity/stability estimates in higher norms for (discrete) solutions with a flexible decoupling strategy. Here, the relevant question which needs to be confirmed for a (new) splitting method is whether iterates for velocity and pressure are stable in a \textit{sufficiently strong} sense. For example, as we have seen for splitting methods devised for approximating the Stokes equations, especially uniform bounds for the pressure in high norms plays an important role (see the discussion around~\eqref{eq:chorin-re0}) in the derivation of their convergence rates. In the non-linear case however the quantification of `\textit{sufficiently strong}' is unclear at the moment. 

Another difficulty comes into play, if the non-linearity is linearised. While this is much desired from an applied perspective since it accelerates the computation even further, it imposes another error, making the theoretical analysis much more complex. For the deterministic Navier--Stokes equations, splitting methods that use a linearisation have been analysed, \emph{e.g.}, in \cite{Prohl1997}; but neither the algorithms nor the analyses can be generalised to the stochastic case in a straightforward manner.

To account for more realistic settings, it is important to also study the effect of {\em non}-periodic boundary conditions for splitting methods (see also the next section). However, already for the stochastic Stokes equations the generalisation of, \emph{e.g.}, the Chorin method to the context of boundary value problems typically requires to impose non-physical boundary conditions for the pressure. These artificial boundary conditions create an additional source of error that must be analysed. While this error is well-understood in the deterministic setting (see, \emph{e.g.},~\cite{Prohl1997}), its impact in the stochastic case is unknown.

\begin{framed}
Currently, it is not known how to lower computational complexities by efficient time-splitting algorithms
 for the stochastic Navier--Stokes equations and guaranteeing optimal convergence at the same time. This is since the algorithms, which is devised for the deterministic equations, cannot be generalised straightforwardly; also, the considered three noises might lead to different algorithms. Moreover, the existing error analyses of splitting methods for the stochastic Stokes equations are limited to periodic boundary conditions, and hence, it is not clear how they perform for more realistic data, such as non-periodic boundary conditions. 
\end{framed}

\subsubsection{General domains and boundary conditions}
Most analyses of algorithms for the approximation of the stochastic Navier--Stokes equations only hold for simple domains and periodic boundary conditions due to the limited regularity of solutions. But as soon as one wants to reliably use these algorithms in more realistic situations, more general domains as well as non-periodic boundary conditions must be considered in their construction and analysis.

For no-slip boundary conditions (\emph{i.e.}, homogeneous Dirichlet boundary data) and in 2D, a convergence analysis of the implicit Euler scheme is performed in~\cite{BrPr1}, guaranteeing the reliability of this method for no-slip boundary conditions as well. A corresponding result so-far seems not available for efficient time-splitting methods to approximate with rates the solution of the stochastic Navier--Stokes equations in 2D. And we are not aware either of any results for them which would settle mere convergence in 3D. 

\begin{framed}
At the moment, the error analysis of numerical algorithms for the stochastic Navier--Stokes equations on general domains with non-periodic boundary conditions is mainly open. From a practical viewpoint, it would be relevant to perform a corresponding analysis and to compare different algorithms by related computational studies for practically relevant fluid flow problems.
\end{framed}


\section{Benchmark problems} \label{sec:benchmarks}
The theoretical study of algorithms to simulate the stochastic Stokes or Navier-Stokes equations in {\em academic} fluid flow scenarios is an important step to evaluate their approximation behaviour, but the mathematical tools as well as their practical value are very often limited to small numerical parameters, short times, or simply hinge on restrictive data assumptions ({\em e.g.}, periodic boundary conditions).
And, by evidence, the methods that we used so far do not provide any guidance on computational complexities, or proper
choices for numerical parameters to achieve a reasonable resolution of the solution — which is a relevant
practical criterion to compare different algorithms. As a result, we devise different problems here — both, of academic kind and of practical relevancy — to compare different algorithms in terms of accuracy and computational complexities to complement their evaluation. While there
is a plethora of benchmarks available in the case of deterministic fluid-flow problems ({\em e.g.}, the lid driven cavity problem) that are well-accepted in the community, we here propose a selection of those to compare algorithms for the stochastic case.

In fact, in the current literature on numerical methods for the stochastic Stokes and Navier--Stokes equations it is that the computational experiments are usually set up on a case-by-case basis, which makes the direct comparison of different algorithms impossible. Therefore, the aim of this section is to initiate a common collection of relevant examples, so that old and new algorithms can be tested within the same parameter configurations, therefore enabling a better comparability of their performance.

Each example is self-contained, focusing on the needs for its implementation; this enables software developers to conduct it easily. In between the examples, we motivate the different problems and explain what purpose they serve for. In this way, mathematicians can choose the benchmark problems which are suitable for them, depending on their research hypotheses.

The benchmark problems are designed to either provide supporting, numerical evidence for theoretical results or to bridge the gap between theory and applications. For the former, regularity of the (typically unknown) solution is of utmost importance; and for the latter, numerical algorithms have to be tested in real-world (or at least realistic) situations. In the next two subsections, we will construct solutions with prescribed regularity (\emph{i.e.}, the \textit{direct approach}) and we will present two examples in which data are prescribed and the solutions are unknown (\emph{i.e.}, the \textit{non-direct approach}).




\subsection{The direct approach: prescribing solutions} 
A major advantage of the direct approach is that it allows to prescribe arbitrary solutions, including regular and irregular ones. In this way, it is possible to numerically test the sensitivity of algorithms with respect to the regularity/irregularity of solutions. Due to its simplicity it is used in the deterministic case extensively. In contrast, in the stochastic case it is widely unknown and thus, it has not been used much yet. To change this, we will provide explicit solutions for the stochastic Stokes equations, which can be used to compute the exact approximation error in numerical simulations.

The section is structured as follows: First, we consider the deterministic case to illustrate the method. After that, we extend the construction to additive, then linear and multiplicative, and finally transport noises. In each case, we provide examples that can serve as benchmark problems for testing algorithms.

\subsubsection{Deterministic Stokes equations} 
The deterministic Stokes equations with no-slip (\emph{i.e.}, homogeneous Dirichlet) boundary conditions are given by:
\begin{subequations} \label{ben-eq:det-Stokes}
\begin{eqnarray}
\partial_t \bfu  -  \mu \Delta {\bf u} + \nabla p  &=&   {\bf f} \quad \qquad \qquad \ \, \quad \qquad \mbox{on } (0,T) \times D,\\ 
{\rm div} \, {\bf u} &=& 0 \quad \qquad \qquad \ \, \quad \qquad \mbox{on } (0,T) \times D, \\
{\bf u} &=& 0 \quad \qquad \qquad \ \, \quad \qquad \mbox{on } (0,T) \times \partial D, \\
{\bf u}(0) &=& {\bf u}_0 \qquad \qquad \qquad \qquad  \mbox{on } D.
\end{eqnarray}
\end{subequations}

It is a simple exercise to manufacture exact solutions for these equations: Given an incompressible vector field~$\bfu: (0,T)\times D \to \mathbb{R}^2$, a mean-value-free pressure~$p:(0,T)\times D \to \mathbb{R}$ and a viscosity~$\mu > 0$, one computes the acting body force~$\bff_\mu: (0,T)\times D \to \mathbb{R}^2$ by solving:
\begin{equation} \label{eq:manufactured-forcing}
\bff_\mu := \partial_t \bfu - \mu \Delta \bfu + \nabla p.
\end{equation}
Additionally evaluating the velocity at the initial time and boundary, one finds the corresponding initial and boundary conditions, respectively. 

We start with the following example: 
\begin{framed}
\begin{example}\label{ex:detStokes} Consider the data:

 $\bullet$ domain $D = (0,1)^2 \qquad \qquad\bullet$ time $T=1 \qquad \qquad \bullet$ viscosity $\mu =1$ 
 
 $\bullet$ body force $\bff(t,x,y) = 0 - \mu \begin{pmatrix}
 f(x,y) \\
 -f(y,x)
\end{pmatrix} + 2t \begin{pmatrix}
x \\ y
\end{pmatrix}$ where
\begin{equation}\nonumber
f(x,y) = 4x^2y(2y^2 - 3y + 1) + 16xy(x - 1)(2y^2 - 3y + 1)   + 4y(x - 1)^2(2y^2 - 3y + 1) + 4x^2(6y - 3)(x - 1)^2
\end{equation} 

$\bullet$ initial condition~$\bfu_0(x,y) =  \begin{pmatrix}
2x^2\left(1-x\right)^2 y(y-1)(2y-1) \\ -2y^2\left(1-y\right)^2x(x-1)(2x-1)
\end{pmatrix} $\\
In this case, the unique solution to~\eqref{ben-eq:det-Stokes} is given by:
\begin{equation}\nonumber
\bfu(t,x,y) \equiv  \begin{pmatrix}
2x^2\left(1-x\right)^2 y(y-1)(2y-1) \\ -2y^2\left(1-y\right)^2x(x-1)(2x-1)
\end{pmatrix} \quad \text{ and } \quad p(t,x,y) = \left(x^2 + y^2 - \frac{2}{3} \right) t.
\end{equation}
\end{example}
\end{framed} 
This example has been specifically designed to test the influence of exploding time derivatives of the pressure (for $t \downarrow 0$) on the performance of numerical algorithms. Especially for the analysis of the Chorin method, the blow-up of $\nabla \partial_t p $ plays an important role. In the above example, we can scale the effect by replacing every $t$ by $t^\gamma$ (where $\gamma \in (0,1]$ measures the temporal regularity). 

To have available solutions of certain fluid flow problems extends to the stochastic case for additive, linear and multiplicative, and transport noises. With suitable modifications, it would be possible to cover the stochastic Navier--Stokes equations, too. Recall that $W$ denotes a real-valued Wiener process.

\subsubsection{Stochastic Stokes equations with additive noise} 
Let $\bfg: \Omega \times (0,T) \times D \to \mathbb{R}^d$ be a given, stochastically integrable vector field. The stochastic Stokes equations with no-slip boundary conditions forced by additive noise are given by:

\begin{subequations}\label{bench-eq:Additive}
\begin{eqnarray}
{\rm d}{\bf u} - \left( \mu \Delta {\bf u}- \nabla p \right)  \dd t &=&   {\bf f}{\rm d}t + \bfg \dd W 
\qquad \qquad \, \, \mbox{on }  \Omega \times (0,T) \times D, \\
{\rm div} \, {\bf u} &=& 0 \quad \qquad \qquad \ \,\, \quad \qquad \mbox{on } \Omega \times (0,T) \times D, \\
{\bf u} &=& 0 \quad \qquad \qquad \ \,\, \quad \qquad \mbox{on } \Omega \times (0,T) \times \partial D, \\
{\bf u}(0) &=& {\bf u}_0 \qquad \qquad \qquad \qquad\,  \mbox{on } \Omega \times D.
\end{eqnarray}
\end{subequations}

For the construction of a solution, we start with the ansatz (recalling that $P_{\tt HL}$ and $P_{\tt HL}^\perp$ denote the Helmholtz projection and its orthogonal complement, respectively):
\begin{eqnarray} \nonumber
\bfu_{\mathrm{add}}(t) &:= &\bfu(t) + P_{\tt HL} \left[ \int_0^t \bfg(s) \dd W(s) \right], \\ \nonumber
\nabla P_{\mathrm{add}}(t) &:= &\int_0^t \nabla p(s) \dd s + P_{\tt HL}^\perp \left[ \int_0^t \bfg(s) \dd W(s) \right];
\end{eqnarray}
that is, we construct solutions to the stochastic Stokes equations as a linear combination of the deterministic solution and suitably decomposed noise; indeed, the Helmholtz decomposition:
\begin{equation} \nonumber
\int_0^t \bfg(s) \dd W(s) =  P_{\tt HL} \left[ \int_0^t \bfg(s) \dd W(s) \right] +  P_{\tt HL}^\perp \left[ \int_0^t \bfg(s) \dd W(s) \right],
\end{equation}
splits the noise into two terms: the first term is an incompressible vector field; the second term corresponds to the gradient part. Each of them is solely influencing velocity and pressure, respectively. 
 
An explicit computation, in which we use the relation between $\bff_\mu$, $\bfu$ and $p$ (see identity~\eqref{eq:manufactured-forcing}), shows that the ansatz generates a solution to the following equations:
\begin{eqnarray}\nonumber
\dd \bfu_{\mathrm{add}}- \mu \Delta \bfu_{\mathrm{add}} \dd t + \dd \nabla P_{\mathrm{add}} & = &\left( \bff_{\mu} - \mu \Delta P_{\tt HL} \left[ \int_0^t g(s) \dd W(s) \right]\right)  \dd t + \bfg \dd W, \\ \nonumber
\Div \bfu_{\mathrm{add}} &=& 0.
\end{eqnarray}

Notice that non-solenoidal noises force us to consider the time-integrated pressure~$P_{\mathrm{add}}$ instead of the classical one. Moreover, the pressure decomposes into a time-regular (called artificial or deterministic) pressure and time-irregular (called stochastic) pressure; see also~\eqref{eq:reconstruct-pressure}, which is in agreement with the theoretical results outlined in Section~\ref{sec:model-and-theory}. The body force~$\bff_\mu$ is shifted in the direction of the divergence-free part of the noise.

We give a warning that the noise might have a non-trivial impact on the boundary conditions. Indeed, since the Helmholtz projection preserves vanishing boundary data only in the normal direction of the boundary, the noise can lead to the prescription of non-trivial boundary conditions in the tangential direction, even though $\bfg$ vanishes completely at the boundary. 

Next, we propose three examples designed for each of the cases: \textit{solenoidal noise}, \textit{potential noise}, and \textit{general noise}. 

For all of them, we use the following ansatz: we define~$\bfg$ in terms of its Helmholtz decomposition. In this way, we do not need to compute the projections; instead, we can simply read-off how the projections $P_{\tt HL}$ and $ P_{\tt HL}^\perp$ act. In particular, we can insist on prescribing zero Dirichlet boundary conditions for the divergence-free and gradient parts separately. Moreover, we choose $\bfg$ to be deterministic and constant in time which enables the explicit computation of the stochastic integrals.

\begin{framed}
\begin{example}[additive solenoidal noise] \label{ex:additive-solenoidal} Consider the data:

 $\bullet$ domain $D = (0,1)^2  \quad\bullet$ time $T=1 \quad \bullet$ viscosity $\mu =1 \quad \bullet$ 1D-Brownian motion $W$

 $\bullet$ noise coefficient $\bfg(\omega,t,x,y) \equiv  \begin{pmatrix}
2x^2\left(1-x\right)^2 y(y-1)(2y-1) \\ -2y^2\left(1-y\right)^2x(x-1)(2x-1)
\end{pmatrix}$ 
 
 $\bullet$ body force $\bff(\omega,t,x,y) = - \mu \begin{pmatrix}
 f(x,y) \\
 -f(y,x)
\end{pmatrix}\left(1 + W_t(\omega) \right) + 2t \begin{pmatrix}
x \\ y
\end{pmatrix}$ where
\begin{equation}\nonumber
f(x,y) = 4x^2y(2y^2 - 3y + 1) + 16xy(x - 1)(2y^2 - 3y + 1)   + 4y(x - 1)^2(2y^2 - 3y + 1) + 4x^2(6y - 3)(x - 1)^2
\end{equation} 

$\bullet$ initial condition~$\bfu_0(x,y) =  \begin{pmatrix}
2x^2\left(1-x\right)^2 y(y-1)(2y-1) \\ -2y^2\left(1-y\right)^2x(x-1)(2x-1)
\end{pmatrix} $\\
In this case, the unique solution to~\eqref{bench-eq:Additive} is given by:
\begin{equation}\nonumber
\bfu(\omega,t,x,y) = \begin{pmatrix}
2x^2\left(1-x\right)^2 y(y-1)(2y-1) \\ -2y^2\left(1-y\right)^2x(x-1)(2x-1)
\end{pmatrix}\left( 1+ W_t(\omega) \right)
\end{equation}
and 
\begin{equation}\nonumber
 p(t,x,y) = \left(x^2 + y^2 - \frac{2}{3} \right) t.
\end{equation}
\end{example}
\end{framed}

\begin{framed}
\begin{example}[additive potential noise] \label{ex:add-potential}Consider the data:

 $\bullet$ domain $D = (0,1)^2  \quad\bullet$ time $T=1 \quad \bullet$ viscosity $\mu =1 \quad \bullet$ 1D-Brownian motion $W$
 
 $\bullet$ noise coefficient~$\bfg(\omega,t,x,y) \equiv  \begin{pmatrix}
2y^2(1-y)^2 x(x-1)(2x-1) \\ 2x^2(1-x)^2y(y-1)(2y-1)
\end{pmatrix}$
 
 $\bullet$ body force~$
\bff(\omega,t,x,y) = - \mu \begin{pmatrix}
 f(x,y) \\
 -f(y,x)
\end{pmatrix} + 2t \begin{pmatrix}
x \\ y
\end{pmatrix}$ where
\begin{equation}\nonumber
f(x,y) = 4x^2y(2y^2 - 3y + 1) + 16xy(x - 1)(2y^2 - 3y + 1)   + 4y(x - 1)^2(2y^2 - 3y + 1) + 4x^2(6y - 3)(x - 1)^2
\end{equation} 

$\bullet$ initial condition~$\bfu_0(x,y) =  \begin{pmatrix}
2x^2\left(1-x\right)^2 y(y-1)(2y-1) \\ -2y^2\left(1-y\right)^2x(x-1)(2x-1)
\end{pmatrix} $\\
In this case, the unique solution to~\eqref{bench-eq:Additive} is given by:
\begin{equation}\nonumber
\bfu(\omega,t,x,y) = \begin{pmatrix}
2x^2\left(1-x\right)^2 y(y-1)(2y-1) \\ -2y^2\left(1-y\right)^2x(x-1)(2x-1)
\end{pmatrix} 
\end{equation}
and 
\begin{equation}\nonumber
 P(\omega,t,x,y) = \left(x^2 + y^2 - \frac{2}{3} \right) \frac{1}{2} t^2 +  \left( x^2(1-x)^2y^2(1-y)^2 - \frac{1}{900} \right) W_t(\omega).
\end{equation}
\end{example}
\end{framed} 

\begin{framed}
\begin{example}[additive general noise] \label{ex:add-generic} Consider the data:

 $\bullet$ domain $D = (0,1)^2  \quad\bullet$ time $T=1 \quad \bullet$ viscosity $\mu =1 \quad \bullet$ 1D-Brownian motion $W$
 
 $\bullet$ noise coefficient 
\begin{equation} \nonumber
\bfg(\omega,t,x,y) \equiv \begin{pmatrix}
2x^2\left(1-x\right)^2 y(y-1)(2y-1) \\ -2y^2\left(1-y\right)^2x(x-1)(2x-1)
\end{pmatrix} +  \begin{pmatrix}
2y^2(1-y)^2 x(x-1)(2x-1) \\ 2x^2(1-x)^2y(y-1)(2y-1)
\end{pmatrix}
\end{equation}
 
 $\bullet$ body force~$
\bff(\omega,t,x,y) = - \mu \begin{pmatrix}
 f(x,y) \\
 -f(y,x)
\end{pmatrix}\left( 1+ W_t(\omega) \right)  + 2t \begin{pmatrix}
x \\ y
\end{pmatrix}$ where
\begin{equation}\nonumber
f(x,y) = 4x^2y(2y^2 - 3y + 1) + 16xy(x - 1)(2y^2 - 3y + 1)   + 4y(x - 1)^2(2y^2 - 3y + 1) + 4x^2(6y - 3)(x - 1)^2
\end{equation} 

$\bullet$ initial condition~$\bfu_0(x,y) =  \begin{pmatrix}
2x^2\left(1-x\right)^2 y(y-1)(2y-1) \\ -2y^2\left(1-y\right)^2x(x-1)(2x-1)
\end{pmatrix} $\\
In this case, the unique solution to~\eqref{bench-eq:Additive} is given by:
\begin{equation}\nonumber
\bfu(\omega,t,x,y) = \begin{pmatrix}
2x^2\left(1-x\right)^2 y(y-1)(2y-1) \\ -2y^2\left(1-y\right)^2x(x-1)(2x-1)
\end{pmatrix} \left( 1+  W_t(\omega) \right) 
\end{equation}
and 
\begin{equation}\nonumber
 P(\omega,t,x,y) = \left(x^2 + y^2 - \frac{2}{3} \right) \frac{1}{2} t^2 +  \left( x^2(1-x)^2y^2(1-y)^2 - \frac{1}{900} \right) W_t(\omega).
\end{equation}
\end{example}
\end{framed} 
These examples underline that it is the Helmholtz decomposition of the noise which determines whether the velocity, the pressure, or both are affected. Solenoidal noise acts purely on the velocity; potential noise (\emph{i.e.}, noise that is the gradient of a scalar function) leaves an impact on the pressure only; and general noise influences both. Moreover, they show that in general the time-integrated pressure cannot behave better than the Brownian motion.

\subsubsection{Stochastic Stokes equations with linear multiplicative noise} Let $g: \Omega \times (0,T) \to \mathbb{R}$ be a given, stochastically integrable function. The stochastic Stokes equations with no-slip boundary conditions forced by multiplicative noise are given by:

\begin{subequations} \label{bench-eq:multi}
\begin{eqnarray}
{\rm d}{\bf u} - \left( \mu \Delta {\bf u} - \nabla p \right) \dd t &=&   {\bf f}{\rm d}t +  \bfu g \dd W 
\qquad \qquad  \mbox{on }  \Omega \times (0,T) \times D, \\
{\rm div} \, {\bf u} &=& 0 \quad \qquad \qquad \qquad \qquad \mbox{on } \Omega \times (0,T) \times D, \\
{\bf u} &=& 0\quad \qquad \qquad \qquad \qquad \mbox{on } \Omega \times (0,T) \times \partial D, \\
{\bf u}(0) &=& {\bf u}_0 \qquad \qquad \qquad \qquad\, \,\,  \mbox{on } \Omega \times D.
\end{eqnarray}
\end{subequations}

Notice that we assumed $g$ to be constant in space. This simplifies the computations substantially; but we also want to stress that the arguments below can be extended to a space-dependent $g$ with suitable modifications. From time to time, we hint on the modifications that would be needed.

We will frequently use the geometric Brownian motion~$W_g$, which is defined by:
\begin{equation*}
W_g(t):= \exp\left( \int_0^{t} g(s) \dd W(s) - \frac{1}{2} \int_0^{t} \abs{g(s)}^2 \dd s \right).
\end{equation*}
Importantly, an equivalent definition of~$W_g$ can be given in terms of stochastic calculus. Indeed, $W_g$ is the solution to the following stochastic differential equation:
\begin{equation} \label{eq:geo-BM}
\dd X (t) = g(t) X(t) \dd W(t) \qquad \text{ and } \qquad X(0) = 1.
\end{equation}

Our ansatz for computing an explicit solution for the stochastic Stokes equations forced by linear multiplicative noise is the following:
\begin{equation} \label{eq:ansatz-multi} 
\bfu_{\mathrm{mul}}(t) :=  \bfu(t) W_g(t) \quad \text{ and }  \quad p_{\mathrm{mul}}(t) := p(t) W_g(t),
\end{equation} 
\emph{i.e.}, we multiply both, the deterministic velocity and pressure by the geometric Brownian motion. 

We start by observing that $\bfu_{\mathrm{mul}}$ is solenoidal since $g$ and hence $W_g$ are constant in space. Indeed, applying the divergence operator in the ansatz shows: $\Div \bfu_{\mathrm{mul}}(t) = \bfu(t) \cdot \nabla W_g(t) = 0$. At this stage, we see clearly that a space-dependent~$g$ would have a non-trivial impact on the incompressibility condition, and as such it would affect the pressure substantially.

Next, we compute the evolution equation for $\bfu_{\mathrm{mul}}$. This corresponds to an application of It\^o's formula, which reveals that
\begin{eqnarray*}
\dd \bfu_{\mathrm{mul}}(t) &= & W_g(t) \dd \bfu(t)  + \bfu(t) \dd W_g(t) \\
&= &W_g(t)\big( \mu \Delta \bfu(t) - \nabla p (t) + \bff_\mu (t) \big) \dd t + \bfu(t) g(t) W_g(t) \dd W(t),
\end{eqnarray*}
where we have used a) that $\bfu$ is time-differentiable which causes the cross-variation to vanish; b) Identity~\eqref{eq:manufactured-forcing}; and c) the fact that $W_g$ solves~\eqref{eq:geo-BM}. It remains to rewrite the right-hand side as a function of $\bfu_\mathrm{mul}$, $p_\mathrm{mul}$, and a modified force.  

Using~\eqref{eq:ansatz-multi}, we observe that the stochastic integral is already in the correct form, \emph{i.e.}:
\begin{equation*}
\bfu(t) g(t) W_g(t) \dd W(t) = g(t) \bfu_{\mathrm{mul}}(t) \dd W(t).
\end{equation*}
To rewrite the drift, we recall the following formula: $\Delta[a b] = \Delta[a] b + 2 \nabla a \cdot \nabla b + a\Delta[ b]$, which is valid for arbitrary, scalar functions $a$ and $b$. Applying this formula component-wise and then using that $g$ is constant in space, we find that
\begin{equation*}
W_g(t) \Delta \bfu(t) = \Delta \bfu_{\mathrm{mul}}(t)  - \left( 2 \nabla W_g(t) \cdot \nabla \bfu(t) + \bfu(t) \Delta W_g(t) \right) = \Delta \bfu_{\mathrm{mul}}(t).
\end{equation*}
Similarly, we conclude that $W_g(t) \nabla p (t) = \nabla p_{\mathrm{mul}}(t)$. Thus, we arrive at
\begin{equation*}
\dd \bfu_{\mathrm{mul}}(t)  = \left(\mu  \Delta \bfu_{\mathrm{mul}}(t)   - \nabla p_{\mathrm{mul}} (t) + W_g(t) \bff_\mu (t) \right) \dd t + g(t) \bfu_{\mathrm{mul}}(t) \dd W(t).
\end{equation*}
In summary, the calculations above have shown that~$\bfu_\mathrm{mul}$ and $p_\mathrm{mul}$ solve
\begin{equation*}
\dd \bfu_{\mathrm{mul}}(t)  = \left(\mu  \Delta \bfu_{\mathrm{mul}}(t)   - \nabla p_{\mathrm{mul}} (t) + W_g(t) \bff_\mu (t) \right) \dd t + g(t) \bfu_{\mathrm{mul}}(t) \dd W(t), \qquad \Div \bfu_{\mathrm{mul}}(t) = 0.
\end{equation*}

Based on this insight, we construct an example that uses an even simpler noise coefficient which is, in addition to being constant in space, deterministic and constant in time. This allows us to compute the stochastic integrals explicitly.

\begin{framed}
\begin{example}[linear multiplicative noise] \label{ex:multi} Consider the data:

 $\bullet$ domain $D = (0,1)^2  \quad\bullet$ time $T=1 \quad \bullet$ viscosity $\mu =1 \quad \bullet$ 1D-Brownian motion $W$
 
 $\bullet$ noise coefficient $g(\omega,t,x,y) \equiv \lambda =1$
 
 $\bullet$ body force
\begin{equation*}
\bff(\omega,t,x,y) = \left[ - \mu \begin{pmatrix}
 f(x,y) \\
 -f(y,x)
\end{pmatrix}  + 2t \begin{pmatrix}
x \\ y
\end{pmatrix} \right]\exp\left( \lambda W_t(\omega) - \frac{\lambda^2}{2} t \right)
\end{equation*} 

where
\begin{equation}\nonumber
f(x,y) = 4x^2y(2y^2 - 3y + 1) + 16xy(x - 1)(2y^2 - 3y + 1)   + 4y(x - 1)^2(2y^2 - 3y + 1) + 4x^2(6y - 3)(x - 1)^2
\end{equation} 

$\bullet$ initial condition~$\bfu_0(x,y) =  \begin{pmatrix}
2x^2\left(1-x\right)^2 y(y-1)(2y-1) \\ -2y^2\left(1-y\right)^2x(x-1)(2x-1)
\end{pmatrix} $\\
In this case, the unique solution to~\eqref{bench-eq:multi} is given by:
\begin{equation}\nonumber
\bfu(\omega,t,x,y) = \begin{pmatrix}
2x^2\left(1-x\right)^2 y(y-1)(2y-1) \\ -2y^2\left(1-y\right)^2x(x-1)(2x-1)
\end{pmatrix} \exp\left( \lambda W_t(\omega) - \frac{\lambda^2}{2} t \right)
\end{equation}
and 
\begin{equation}\nonumber
 p(\omega,t,x,y) = \left(x^2 + y^2 - \frac{2}{3} \right) t \exp\left( \lambda W_t(\omega) - \frac{\lambda^2}{2} t \right) .
\end{equation}
\end{example}
\end{framed}
Since the noise depends linearly on the solenoidal solution, it is solenoidal itself and hence, there is no need to consider the time-integrated pressure. However, notice that the pressure is still influenced by the noise. This is due to the body force that has a randomly scaled gradient-part in its Helmholtz decomposition.

\subsubsection{Stochastic Stokes equations with transport noise} Let $\bfsigma: \mathbb{R}^d \to \mathbb{R}^d$ be a given, sufficiently regular transport field. The stochastic Stokes equations with inhomogeneous Dirichlet boundary conditions forced by transport noise are given by:
\begin{subequations} \label{bench-eq:transport}
\begin{eqnarray}
{\rm d}{\bf u} - \left( \mu \Delta {\bf u} - \nabla p \right) \dd t &=&   {\bf f}{\rm d}t + (\bfsigma \cdot \nabla) \bfu \circ \dd W 
\qquad   \mbox{on }  \Omega \times (0,T) \times D, \\
{\rm div} \, {\bf u} &=& 0 \quad \qquad \qquad \quad \,\,  \qquad \qquad \mbox{on }\Omega \times  (0,T) \times D, \\
{\bf u} &=& \bfu_{\tt BC}\quad \qquad \qquad \qquad \,\,\, \qquad \mbox{on } \Omega \times (0,T) \times \partial D, \\
{\bf u}(0) &=& {\bf u}_0 \qquad \qquad \qquad \qquad \quad \,\,\,\,\,  \mbox{on } \Omega \times D.
\end{eqnarray}
\end{subequations}

Transport noise is inherently linked to the stochastic flow~$\bfX$ defined by:
\begin{equation*}
\dd \bfX_t^\bfx = -\bfsigma(\bfX_t^\bfx) \circ \dd W(t), \qquad \bfX_0^\bfx = \bfx, \qquad ( t\geq 0, \, \bfx \in \mathbb{R}^d ).
\end{equation*}
Our ansatz for constructing an explicit solution to the stochastic Stokes equations force by transport noise is (here $\bfY$ is the inverse flow of $\bfX$):
\begin{equation*}
\bfu_{\mathrm{tra}}(t,\bfx) := \bfu(t, \bfY_t^x) \quad \text{ and }  \quad p_{\mathrm{tra}}(t,\bfx) := p(t, \bfY_t^x);
\end{equation*}
in other words, at time $t$ the point~$\bfx$ is first moved by the inverse flow to the random point~$\bfY_t^x$, and then the deterministic velocity and pressure are evaluated at this random point. For expressing the evolution dynamics of our ansatz, we need the Jacobian matrix~$\bfPsi$ and the Laplacian~$\bfpsi$ of the flow~$\bfX$ evaluated at the position of the inverse flow~$\bfY$, \emph{i.e.}, for each component $i,j \in \{1,\ldots, d\}$, they are given by
\begin{equation*}
\Psi_{i,j}(t,\bfx) := \partial_{i} \bfX_{t,j}^{\bfz} \big|_{\bfz = \bfY_t^{\bfx}} \qquad \text{ and } \qquad \psi_i(t,\bfx) := \Delta \bfX_{t,i}^{\bfz} \big|_{\bfz = \bfY_t^{\bfx}},
\end{equation*}
respectively. Moreover, we define the modified force $\bff_{\mathrm{tra}}(t,\bfx) := \bff_\mu(t,\bfY_t^x)$.

An application of It\^o's formula and a lengthy computation, which we omit for the sake of readability, reveal that~$\bfu_\mathrm{tra}$ and $p_\mathrm{tra}$ solve
\begin{equation*}
\dd \bfu_{\mathrm{tra}}^{\ell}  = \left(\mu \left[ \Psi_{i,j} \Psi_{i,k} \partial_j \partial_k \bfu_{\mathrm{tra}}^{\ell} + \psi_i \partial_i\bfu_{\mathrm{tra}}^{\ell}   \right]    - \Psi_{\ell,i} \partial_i p_{\mathrm{tra}}   + \bff_{\mathrm{tra}}^{\ell} \right) \dd t + \sigma_i \partial_i \bfu_{\mathrm{tra}}^{\ell} \circ \dd W(t),
\end{equation*}
for each component $\ell \in \{1,\ldots,d\}$; and 
\begin{equation*}
\Div \bfu_{\mathrm{tra}}(t,\bfx) =  \partial_i \bfu^{j}(t,\bfY_t^{\bfx}) \partial_{j} \bfY_{t,i}^{\bfx},
\end{equation*}
where we use the Einstein summation convention (\emph{i.e.}, we sum over repeated indices) in both equations.

The dependence of the stochastic flow~$\bfX$ with respect to the initial condition not only influences the weights in front of the Hessian tensor and gradient of velocity and pressure, respectively, but additionally determines whether and at what intensity advection takes place. In particular, if $\bfX$ depends (affine-)linearly on the initial condition, then the advective term vanishes.

Next, we will give two examples in which we compute the stochastic flow~$\bfX$ and its inverse~$\bfY$ explicitly: for the first example, we choose a \textit{constant} field; and for the second one, we choose the \textit{identity} field. In both cases, the dependence of the flow on the initial condition will be affine-linear. Consequently, the gradient and Hessian of the flows will be constant and zero, respectively.

\textit{Constant Transport Noise.} Let $\bfGamma \in \mathbb{R}^d$ and $\bfsigma(\bfx) \equiv \bfGamma$. Then the following identities hold:
\begin{equation*}
\bfX_t^{\bfx}(\omega) =  \bfx - \bfGamma W_t(\omega), \qquad \bfY_t^{\bfx}(\omega)  = \bfx + \bfGamma W_t(\omega), \qquad \nabla \bfX_t^{\bfx}(\omega) = \identity_{d} = \nabla \bfY_t^{\bfx}(\omega),
\end{equation*}
where $\identity_{d}$ denotes the $d$-dimensional unit matrix. Using these identities allows us to derive the following example:

\begin{framed}
\begin{example}[constant transport noise] \label{ex:transportConstant} Consider the data:

 $\bullet$ domain $D = (0,1)^2  \quad\bullet$ time $T=1 \quad \bullet$ viscosity $\mu =1 \quad \bullet$ 1D-Brownian motion $W$
 
 $\bullet$ noise coefficient $\bfsigma(x,y) \equiv \bfGamma =  \begin{pmatrix}
 \Gamma_1 \\ \Gamma_2
 \end{pmatrix} = \begin{pmatrix}
 1 \\ 1
 \end{pmatrix}$
 
 $\bullet$ body force
\begin{equation*}
\bff(\omega,t,x,y) =  - \mu \begin{pmatrix}
 f(\hat{x},\hat{y}) \\
 -f(\hat{y},\hat{x})
\end{pmatrix}  + 2t \begin{pmatrix}
\hat{x} \\ \hat{y}
\end{pmatrix} 
\end{equation*} 

where $\hat{x} = x + \Gamma_1 W_t(\omega)$, $\hat{y} = y + \Gamma_2 W_t(\omega)$, and
\begin{equation}\nonumber
f(x,y) = 4x^2y(2y^2 - 3y + 1) + 16xy(x - 1)(2y^2 - 3y + 1)   + 4y(x - 1)^2(2y^2 - 3y + 1) + 4x^2(6y - 3)(x - 1)^2
\end{equation} 

$\bullet$ boundary condition~$\bfu_{\tt BC}(\omega,t,x,y) =  \begin{pmatrix}
2\hat{x}^2\left(1-\hat{x}\right)^2 \hat{y}(\hat{y}-1)(2\hat{y}-1) \\ -2\hat{y}^2\left(1-\hat{y}\right)^2\hat{x}(\hat{x}-1)(2\hat{x}-1)
\end{pmatrix}  $

$\bullet$ initial condition~$\bfu_0(x,y) =  \begin{pmatrix}
2x^2\left(1-x\right)^2 y(y-1)(2y-1) \\ -2y^2\left(1-y\right)^2x(x-1)(2x-1)
\end{pmatrix} $\\
In this case, the unique solution to~\eqref{bench-eq:transport} is given by:
\begin{equation}\nonumber
\bfu(\omega,t,x,y) = \begin{pmatrix}
2\hat{x}^2\left(1-\hat{x}\right)^2 \hat{y}(\hat{y}-1)(2\hat{y}-1) \\ -2\hat{y}^2\left(1-\hat{y}\right)^2\hat{x}(\hat{x}-1)(2\hat{x}-1)
\end{pmatrix} 
\end{equation}
and 
\begin{equation}\nonumber
 p(\omega,t,x,y) = \left(\hat{x}^2 + \hat{y}^2 - \int_{D} (\hat{x}^2 + \hat{y}^2) \,\dd \bfx  \right) t,
\end{equation}
where 
\begin{equation*}
\int_{D} (\hat{x}^2 + \hat{y}^2) \,\dd \bfx  = \frac{1}{3} \left( \big[\big(1+ \Gamma_1 W_t(\omega) \big)^3 + \big(1+ \Gamma_2 W_t(\omega) \big)^3 \big] - \big[ \big(\Gamma_1 W_t(\omega) \big)^3 + \big(\Gamma_2 W_t(\omega) \big)^3 \big] \right).
\end{equation*}
\end{example}
\end{framed}
In contrast to Example~\ref{ex:detStokes} in which the velocity vanishes at the boundary of the unit square, the velocity above vanishes at the boundary of a randomly translated unit square. In fact, this boundary is completely characterised by the stochastic flow~$\bfX$ since it is given as the boundary of $\bfX_t^{D}(\omega) = D - \bfGamma W_t(\omega)$, \emph{i.e.}, the unit square is translated in the direction~$-\bfGamma$ with a randomly scaled magnitude. However, since we do not seek a solution on the randomly translated domain but the original one, non-trivial boundary conditions have to be imposed.

\textit{Linear transport noise.} Let $\lambda \in \mathbb{R}$ and $\bfsigma(\bfx) \equiv \lambda \bfx$. Then the following identities hold:
\begin{eqnarray*}
\bfX_t^{\bfx}(\omega) &= & \exp( - \lambda W_t(\omega) )  \bfx , \qquad \quad \,\,\, \bfY_t^{\bfx}(\omega)  =  \exp( \lambda W_t(\omega) )  \bfx, \\
 \nabla \bfX_t^{\bfx}(\omega) &=  &\exp( - \lambda W_t(\omega) )  \identity_{d}, \qquad \nabla \bfY_t^{\bfx}(\omega) = \exp( \lambda W_t(\omega) )  \identity_{d}.
\end{eqnarray*}
From them, we derive the next example.

\begin{framed}
\begin{example}[Linear Transport Noise] \label{ex:transportLinear} Consider the data:

 $\bullet$ domain $D = (0,1)^2  \quad\bullet$ time $T=1 \quad \bullet$ viscosity $\mu =1 \quad \bullet$ 1D-Brownian motion $W$
 
 $\bullet$ noise coefficient $\bfsigma(x,y) \equiv \lambda \begin{pmatrix}
 x \\ y
 \end{pmatrix}$ with $\lambda = 1$
 
 $\bullet$ body force
\begin{equation*}
\bff(\omega,t,x,y) =  - \mu \begin{pmatrix}
 f(\hat{x},\hat{y}) \\
 -f(\hat{y},\hat{x})
\end{pmatrix} e^{2 \lambda W_t(\omega)}  + 2t \begin{pmatrix}
\hat{x} \\ \hat{y}
\end{pmatrix} e^{\lambda W_t(\omega)}  
\end{equation*} 

where $\hat{x} = e^{\lambda W_t(\omega)}x$, $\hat{y} =e^{\lambda W_t(\omega)} y$, and
\begin{equation}\nonumber
f(x,y) = 4x^2y(2y^2 - 3y + 1) + 16xy(x - 1)(2y^2 - 3y + 1)   + 4y(x - 1)^2(2y^2 - 3y + 1) + 4x^2(6y - 3)(x - 1)^2
\end{equation} 

$\bullet$ boundary condition~$\bfu_{\tt BC}(\omega,t,x,y) =  \begin{pmatrix}
2\hat{x}^2\left(1-\hat{x}\right)^2 \hat{y}(\hat{y}-1)(2\hat{y}-1) \\ -2\hat{y}^2\left(1-\hat{y}\right)^2\hat{x}(\hat{x}-1)(2\hat{x}-1)
\end{pmatrix}  $

$\bullet$ initial condition~$\bfu_0(x,y) =  \begin{pmatrix}
2x^2\left(1-x\right)^2 y(y-1)(2y-1) \\ -2y^2\left(1-y\right)^2x(x-1)(2x-1)
\end{pmatrix} $\\
In this case, the unique solution to~\eqref{bench-eq:transport} is given by:
\begin{equation}\nonumber
\bfu(\omega,t,x,y) = \begin{pmatrix}
2\hat{x}^2\left(1-\hat{x}\right)^2 \hat{y}(\hat{y}-1)(2\hat{y}-1) \\ -2\hat{y}^2\left(1-\hat{y}\right)^2\hat{x}(\hat{x}-1)(2\hat{x}-1)
\end{pmatrix} 
\end{equation}
and 
\begin{equation}\nonumber
 p(\omega,t,x,y) = \left(\hat{x}^2 + \hat{y}^2 - \frac{2}{3}e^{2 \lambda W_t(\omega)}  \right) t .
\end{equation}
\end{example}
\end{framed}

While constant transport noise leads to a randomly scaled translation of the domain, linear transport noise leads to randomly scaled dilatation focused at the origin. 

One might ask whether the scaled identity map can be replaced by a more general linear transformation. However, if one allows a different scaling in each co-ordinate, then this would already lead to a violation of the incompressibility constraint. If one is willing to consider compressible vector fields, then the procedure above can be used to generate more examples.

The flows imposed by the transport fields, that were used in the examples above, change the underlying domain; and, consequently, they force us to impose non-trivial boundary conditions. Stochastic flows that do not change the geometry typically need to vanish at the boundary (or at least the transport field~$\bfsigma$ needs to belong to the tangent space at the boundary). Intuitively, this corresponds to particles, that originally start inside the domain, cannot leave the domain by crossing its boundary. Thus, the domain remains invariant. A rigorous derivation of this fact can be found, \emph{e.g.}, in~\cite[Section~3]{MR1124837}. However, in this situation it is impossible to find an exact expression of the stochastic flow.

\subsection{The non-direct approach: prescribing data}
The non-direct approach does not prescribe an exact solution; instead, the data are chosen such that they meet theoretical requirements and/or they are given by an application. On the one hand, this removes the access to the exact error (as the analytic solution is no longer known) and it possibly prohibits a direct application of the theory (as the data might be too irregular); but on the other hand, it connects academic examples and real-world applications. These applications demand a thorough testing of the numerical algorithms to guarantee accuracy of the quantitative predictions.

Next, we present a theoretically motivated example, the construction of particular noises and a real-world application.  

\subsubsection{Stochastic Stokes equations with non-linear multiplicative noise} The following example (or slightly modified versions of it) has been used in~\cite{FPV1,FV2022,LMS1}. The most important features are: (i) a non-linear dependence on the solution, and (ii) a variable scale of spatial regularity. In this way, it is possible to explore how spatial regularity of the noise moderates the convergence rate of algorithms. A summary of the data is given in the following example:

\begin{framed}
\begin{example}[non-linear multiplicative noise] \label{ex:nonlinearMultiplicative} Consider the data:

$\bullet$ domain $D = (0,1)^2  \qquad\bullet$ time $T=1 \qquad \bullet$ viscosity $\mu =1 $\\
The noise is given in terms of a cylindrical process: 
\begin{equation*} 
\bfsigma(\bfu) \dd W := \sum_{\ell_1, \ell_2 = 0}^\infty \sqrt{\mu_{\ell_1,\ell_2}^r}\begin{pmatrix}
 \sqrt{u_1^2 + 1} \\
 \sqrt{u_2^2 + 1}
\end{pmatrix} \phi_{\ell_1,\ell_2} \dd \beta_{\ell_1,\ell_2},
\end{equation*}
where
\begin{itemize}
\item (spatial colouring) $r \in (0,\infty)$ determines the spatial regularity and the coefficients are given by 
\begin{equation*}
\mu_{\ell_1,\ell_2} := \begin{cases}
0, & \ell_1 = \ell_2 = 0, \\
(\ell_1^2 + \ell_2^2)^{-1}, & \text{else};
\end{cases}
\end{equation*}
\item (spatial basis) the basis functions are given by $\phi_{\ell_1,\ell_2}(\bfx) = \cos(\pi \ell_1 x_1) \cos(\pi \ell_2 x_2)$;
\item (driving processes) each process $\beta_{\ell_1,\ell_2}$ is a 1D-Brownian motion independent of the others.
\end{itemize}
The task is to numerically approximate the following equations: 
\begin{eqnarray}\nonumber
{\rm d}{\bf u} - \left( \mu \Delta {\bf u} - \nabla p \right) \dd t &=&   {\bf1 }{\rm d}t + \bfsigma(\bfu) \dd W 
\qquad \quad \,\,  \mbox{on }  \Omega \times (0,T) \times D\,,\\ \nonumber
{\rm div} \, {\bf u} &=& 0 \quad \qquad \qquad  \,  \qquad \qquad \mbox{on }\Omega \times  (0,T) \times D\,, \\
\nonumber
{\bf u} &=& {\bf0} \quad \qquad \qquad \qquad \, \qquad \mbox{on } \Omega \times (0,T) \times \partial D\,, \\
\nonumber
{\bf u}(0) &=& {\bf0} \qquad \qquad \qquad \qquad \quad \,  \mbox{on } \Omega \times D\,.
\end{eqnarray}
\end{example}
\vspace*{-0.8em}
\end{framed}
In contrast to previous examples in which the noise was defined by a single Brownian motion, the noise here is given in terms of infinitely many Brownian motions. This causes an additional difficulty since one has to truncate the noise in order to implement an approximation. Typically the truncation index is chosen in accordance with the spatial resolution, as high frequencies can only be captured on sufficiently refined spatial scales. 

\subsubsection{Construction of particular noises on various scales} To explore the dependence of the stochastic Stokes equations with respect to the particular structure of the noise (\emph{e.g.} additive, multiplicative, or transport) and its spatial scale, we introduce a scale-dependent construction of a family of noises: First, we prescribe the shape of the spatial action of the noise by choosing a vector field. Then, depending on the desired scale, this vector field is copied and scaled. Afterwards, each copy is multiplied by a noise. Finally, superposing these copies disjointly, we obtain a vector field that repeats the shape function multiple times at different locations; for an illustration of this, we refer to Figure~\ref{fig:vortexes} where the following example is depicted.

\begin{framed}
\begin{example}[vortexes on equi-distant lattice]\label{ex:vortexes} Consider the data: $\bullet$ domain $D = (0,1)^2$

$\bullet$ scale $\mathrm{scl}~ (= 2)\in \mathbb{N}_0 \quad\bullet$
length $\lambda = \tfrac{1}{\mathrm{scl}+1} \quad\bullet$ centers $\mathrm{C} = \{\tfrac{2j+1}{2(\mathrm{scl}+1)}: j= 0,1, \ldots, \mathrm{scl}\}$

The shape function is given by a counter-clockwise vortex around the mid-point of~$D$:
\begin{equation}\nonumber
\bfg(x,y) =  100 \begin{pmatrix}
2x^2\left(1-x\right)^2 y(y-1)(2y-1) \\ -2y^2\left(1-y\right)^2x(x-1)(2x-1)
\end{pmatrix}\chi_{D}(x,y),
\end{equation}
where $\chi_{D}$ denotes the indicator function on $D$.

For $c = (x_0,y_0) \in C^2$, the small-scale vortex around $c$ is given by:
\begin{equation}\nonumber
\bfg_c^{\lambda}(x,y) =  g(\hat{x},\hat{y}), \qquad \text{ where } \qquad \hat{x} = \frac{2(x-x_0) + \lambda}{2\lambda} \quad \text{and} \quad \hat{y} = \frac{  2(y-y_0) + \lambda}{2\lambda}.
\end{equation}

Let $\{\beta_c\}_{c \in C^2}$ be a family of independent 1D-Brownian Motions. \\
We define the following noises for which $\bfv$ denotes a vector field:
\begin{itemize}
\item (additive) $\dd \bfZ_{\bfv} := \sum_{c\in C^2} \bfg_c^{\lambda} \dd \beta_c$;
\item (multiplicative, non-linear) $\dd \bfZ_{\bfv} := \sum_{c\in C^2} \bfg_c^{\lambda} \abs{\bfv} \circ \dd \beta_c$;
\item (transport) $\dd \bfZ_{\bfv} := \sum_{c\in C^2} (\bfg_c^{\lambda} \cdot \nabla) \bfv \circ \dd \beta_c$.
\end{itemize}
\end{example}
\end{framed}

\begin{figure*}[t!]
    \centering
    \begin{subfigure}[t]{0.33\textwidth}
        \centering
        \includegraphics[width=1.0\textwidth]{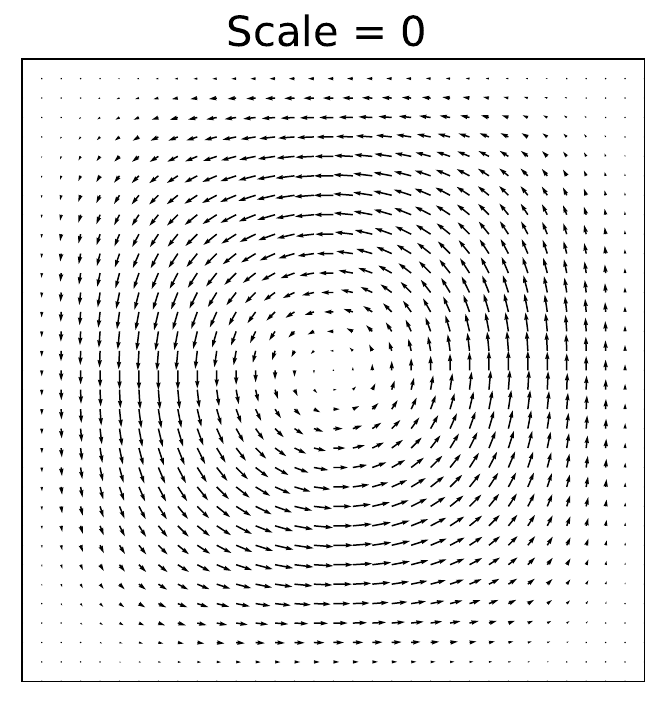}
    \end{subfigure}%
    \begin{subfigure}[t]{0.33\textwidth}
        \centering
        \includegraphics[width=1.0\textwidth]{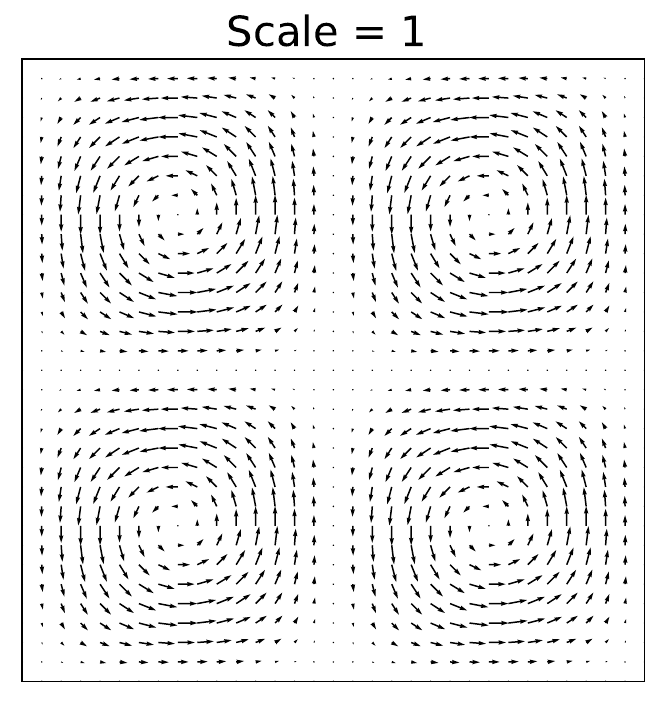}
    \end{subfigure}%
    \begin{subfigure}[t]{0.33\textwidth}
        \centering
        \includegraphics[width=1.0\textwidth]{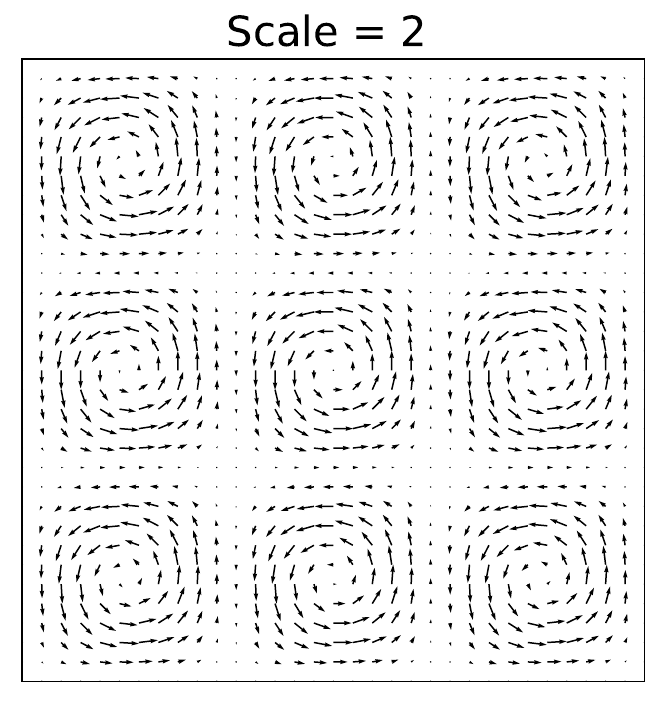}
    \end{subfigure}
    \caption{Illustration of the vortexes used for the construction of particular noises in Example~\ref{ex:vortexes}.} 
    \label{fig:vortexes}
\end{figure*}

\subsubsection{The lid-driven cavity problem} \label{sec:lid-driven} 
It is especially important to test numerical algorithms in realistic situations to eventually enable their use in real-world applications. However, currently most algorithms, developed for the stochastic Stokes and Navier--Stokes equations, are not validated in meaningful benchmarks. One of these benchmark problems is the so-called \textit{lid-driven cavity} experiment (\emph{cf.} Section~\ref{sec:intro} for various illustrations). The following introduction is taken from~\cite{2024arXiv241214316D}.

The lid-driven cavity experiment considers a resting fluid in a box (\emph{e.g.}, a lake), which is influenced by non-trivial boundary conditions acting solely at the lid of the container (\emph{e.g.}, wind blowing over the surface of the lake). The boundary conditions push the fluid at a constant rate into one direction (\emph{e.g.}, the wind blows in a fixed direction and at constant speed). Initially, the fluid starts at rest. Eventually, induced by the boundary conditions, the fluid displays versatile dynamics in the whole container; see, \emph{e.g.},~\cite{Zhu2020}.

Resolving the full range of dynamics in numerical simulations is a challenging task. Motivated by this, many authors used the lid-driven cavity experiment as a benchmark problem for numerical algorithms; see, \emph{e.g.},~\cite{Botti2019,Erturk2005,Zhu2020,kuhlmann2019lid} and the references therein. Unfortunately, numerical simulations for stochastic models of the lid-driven cavity experiment are still largely missing. Therefore, we want to encourage numerical analysts to also validate their algorithms using this benchmark problem. 

The lid-driven cavity experiment focuses on the challenges that are created by considering non-trivial boundary conditions. In the following example, we summarize the data needed for conducting the experiment: 

\begin{framed}
\begin{example}[lid-driven cavity] \label{ex:lid-driven} Consider the data:

 $\bullet$ domain $D = (0,1)^2  \hspace*{0.6em}\bullet$ time $T=20 \hspace*{0.6em} \bullet$ viscosity $\mu =\tfrac{1}{500} \hspace*{0.6em} \bullet$ noise $\bfZ_{\bfu}$ (from Example~\ref{ex:vortexes})
 
 $\bullet$ boundary condition~$\bfu_{\tt BC}(\omega,t,x,y) = \begin{pmatrix}
1 \\
0
\end{pmatrix} \chi_{\{y=1\}}(x,y) $\\
The task is to numerically approximate the following equations: 
\begin{eqnarray}\nonumber
{\rm d}{\bf u} - \left( \mu \Delta {\bf u} - \nabla p \right) \dd t &=&     \dd \bfZ_{\bfu} 
\qquad \qquad   \mbox{on }  \Omega \times (0,T) \times D\,,\\ \nonumber
{\rm div} \, {\bf u} &=& 0 \quad  \,\,  \qquad \qquad \mbox{on }\Omega \times  (0,T) \times D\,, \\
\nonumber
{\bf u} &=& \bfu_{\tt BC} \qquad \,\,\, \qquad \mbox{on } \Omega \times (0,T) \times \partial D\,, \\
\nonumber
{\bf u}(0) &=& {\bf0} \qquad \qquad \quad \,\,  \mbox{on } \Omega \times D\,.
\end{eqnarray}
\end{example}
\end{framed}

\subsection{Summary}
Numerical algorithms must be tested to ensure their accurate performance in various situations. These tests focus on different aspects, such as supporting theory and exploring theoretical extensions or their failure by systematically violating assumptions. For their construction, two methods are available:

The \emph{direct} method allows us to prescribe the solution exactly, giving us access to the exact approximation error. This method has been widely used for deterministic equations to verify the sharpness of theoretical results. However, at the moment it has not been used for stochastic equations. We have shown that this method is also available in the stochastic case as long as the noise is simple (\emph{i.e.}, solution-independent or linear) and we are free to choose the initial condition, boundary conditions, and, generally random, body force. Due to the success of the direct method for deterministic equation, we emphasise that it should be utilized for stochastic equations more often.

The \emph{non-direct} method prescribes the data rather than the solution. Consequently, the solution is unknown and the exact approximation error is itself a non-computable quantity. Therefore, the non-direct method is infeasible for supporting theoretically derived convergence rates of the approximation error. Instead, it focuses on bridging the gap between theory and applications. Here, it is important to conduct simulations in mathematically demanding but realistic situations. Attesting the algorithms success also in these challenging situations enables practitioners to use the quantitative predictions for particular applications.


\section{Conclusions} \label{sec:conclusion}
The aim of this work is to survey available numerical schemes and related convergence analyses for the incompressible stochastic Stokes and, and the 2D stochastic Navier--Stokes equations. We made an effort to identify key problems
in this context, and to motivate ideas. The presentation is kept at an informal level to serve
this purpose, and we provide an extensive list of references for a precision of technical arguments needed for it. Most existing
results in this area hold for
prototypical academic flow problems ({\em e.g.}, a torus, and supplemented by periodic boundary conditions)
to get over related analytical complications, and we made clear that further progress in this area is needed, which should include
also to evaluate the performance of numerical methods in more realistic settings of engineering relevancy. To enhance
comparability of numerical schemes in terms of computational complexity in future contributions, we propose an
extended list of benchmark problems consisting of both, academic and applied fluid flow scenarios. And the aim
with it is to test new methods also even if their theoretical foundation is restricted by technical assumptions.

\section*{Funding}
DB has been funded by Grant BR 4302/3-1 (525608987) of the German Research Foundation (DFG) within the framework of the priority research program SPP 2410 and by Grant BR 4302/5-1 (543675748) of the German Research Foundation (DFG).

JW was supported by the Australian Government through the Australian Research Council’s Discovery Projects funding scheme (grant number DP220100937).


\if0
\begin{table}
\begin{center}
\begin{tabular}{||c|l|c||}
\hline
\textbf{Formula} & \textbf{Meaning}  & \textbf{Ref.} \\
\hline \hline 
& & \\
$\mathbb L^{p}$, $L^p_\omega$, $L^p_t$, $L^p_x$  & Lebesgue spaces with integrability $p$ & Section~\ref{sec:Notation}\\
$\mathbb W^{k,p}$, $W^{k,p}_t$, $W^{k,p}_x$ & Sobolev spaces with differentiability $k$ \& integrability $k$ & Section~\ref{sec:Notation}\\
$\mathbb H^k$, $H^k_x$ & Sobolev spaces with differentiability $k$ \& integrability $2$ & Section~\ref{sec:Notation} \\
$\mathbb V$ & Solenoidal functions from $\mathbb H^1$ & \eqref{eq:Space-exact-div-free} \\
$P_{\tt HL}$, $P_{\tt HL}^\perp$ & Helmholtz--Leray projections & \eqref{def:Helmholtz} \\
$\tau$ & time step size & \\
$h$ & mesh size in space &
\end{tabular}
\caption{Notation}
\label{table:notation}
\end{center}
\end{table}

\jw{Aktuell bin ich der Meinung, dass wir keine Notationstabelle (Table~\ref{table:notation}) brauchen. Was meinst ihr?}

\fi

%

\printbibliography 

\end{document}